\newtheorem{theorem}{Theorem}
\newtheorem{lemma}[theorem]{Lemma}
\newtheorem{corollary}[theorem]{Corollary}
\newtheorem{proposition}[theorem]{Proposition}
\theoremstyle{definition}
\newtheorem{definition}[theorem]{Definition}
\newcommand{\figref}[1]{Figure~\ref{#1}}
\algnewcommand\algorithmicinput{\textbf{INPUT:}}
\algnewcommand\INPUT{\item[\algorithmicinput]}
\algnewcommand\algorithmicoutput{\textbf{OUTPUT:}}
\algnewcommand\OUTPUT{\item[\algorithmicoutput]}
\newcommand{\R}{\mbox{$\mathbb{R}$}}
\newcommand{\X}{\mbox{$\mathbb{X}$}}
\newcommand{\Z}{\mbox{$\mathbb{Z}$}}
\newenvironment{enum}{
\begin{enumerate}
  \setlength{\itemsep}{1pt}
  \setlength{\parskip}{0pt}
  \setlength{\parsep}{0pt}
}{\end{enumerate}}
\titlespacing{\section}{0pt}{11pt}{0pt}
\titlespacing{\subsection}{0pt}{8pt}{-4pt}
\titlespacing{\subsubsection}{0pt}{10pt}{-4pt}
\newcommand{\dgm}{D} 
\newcommand{\dgmspace}{\mathcal{D}_T} 
\newcommand{\lscape}{\lambda} 
\newcommand{\lscapespace}{\mathcal{L}_T} 
\let\hat\widehat
\let\tilde\widetilde
\title{\Large Stochastic Convergence of Persistence Landscapes and Silhouettes}
\author[1]{Fr\'ed\'eric Chazal}
\author[2]{Brittany Terese Fasy}
\author[3]{Fabrizio Lecci}
\author[3]{\\Alessandro Rinaldo}
\author[3]{Larry Wasserman}
\affil[1]{INRIA Saclay}
\affil[2]{Computer Science Department, Tulane University}
\affil[3]{Department of Statistics, Carnegie Mellon University}
\date{topstat@stat.cmu.edu \\ \text{} \\ \text{} \\
November 27, 2013
}
\begin{document}

\maketitle

\begin{abstract}
\noindent
Persistent homology is a widely used tool in Topological Data Analysis that
encodes multiscale topological information
as a multi-set of points in the plane called a persistence diagram.
It is difficult to apply statistical theory directly
to a random sample of diagrams.
Instead, we can summarize the persistent homology
with the persistence landscape, introduced by
Bubenik, which converts a diagram into a
well-behaved real-valued function.  We investigate
the statistical properties of landscapes, such as
weak convergence of the average landscapes and
convergence of the bootstrap.  In addition, we
introduce an alternate functional summary of persistent
homology, which we call the silhouette, and derive an analogous statistical
theory.
\end{abstract}

\clearpage
\section{Introduction}
\vskip-5pt

Often, data can be represented as point clouds that carry
specific topological and geometric structures.  Identifying,
extracting, and exploiting these underlying geometric structures has
become a problem of fundamental importance for data analysis and
statistical learning. With the emergence of new geometric inference
and algebraic topology tools, computational topology has recently seen
an important development toward data analysis, giving birth to the
field of Topological Data Analysis, whose aim is to infer
relevant, multiscale, qualitative, and quantitative topological
structures directly from the data.  

Persistent homology
(\cite{edels2002topological,zc-cph-05}) is a fundamental tool
for providing multi-scale homology descriptors of data. More precisely, it
provides a framework and efficient algorithms to quantify the
evolution of the topology of a family of nested topological spaces,
$\{\X(t) \}_{t \in \R}$, built on top of the data and indexed by a set
of real numbers -- that can be seen as scale parameters -- such that
$\X(t) \subseteq \X(s)$ for all $t \leq s$.  At the homology 
level\footnote{We consider here homology with coefficient in a given field,
so the homology groups are vector spaces.}, such a filtration
induces a family $\{H(\X(t)) \}_{t \in \R}$ of homology groups and the
inclusions $\X(t) \hookrightarrow \X(s)$ induce a family of
homomorphisms $H(\X(t)) \to H(\X(s))$, $t \leq s$, which is known as the
persistence module associated to the filtration.  When the rank of all
the homomorphisms $H(\X(t)) \to H(\X(s))$, $t < s$, are finite the
module is said to be q-tame (\cite{chazal2012structure}) and it can be
summarized as a set of real intervals $(b_i,d_i)$ representing
homological features that appear in the filtration at $t= b_i$ and
disappear at $t= d_i$. Such a set of intervals can be represented as a
multi-set of points in the real plane and is then called a persistence
diagrams.
Thanks to their stability properties
(\cite{stability,chazal2012structure}), persistence diagrams provide
relevant multi-scale topological information about the data. 

In a more
statistical framework, when several data sets are randomly generated
or are coming from repeated experiments, one often has to deal with
not only one persistence diagrams but with a whole distribution of
persistence diagrams. Unfortunately, since the space of persistence diagrams
is a general metric space, analyzing and quantifying the
statistical properties of such a distribution turns out to be
particularly difficult.

A few attempts have been made towards a statistical analysis of
distributions of persistence diagrams.  For example, the concentration
and convergence properties of persistence diagrams obtained from point
cloud randomly sampled on manifolds and from more general compact metric
spaces are studied in \cite{us2013, cglm-orcpd-13}. Considering
general distributions of persistence diagrams,
\cite{turner2012frechet} have suggested using the Fr\'echet average of
the diagrams $D_1,\ldots, D_n$.  Unfortunately, the Fr\'echet average
is unstable and not even unique.  A solution that uses a probabilistic
approach to define a unique Fr\'echet average can be found in
\cite{munch2013probabilistic}, but its computation remains practically
prohibitive.

In this paper, we also consider general distributions of persistence diagrams but we build on a completely different approach, 
proposed in
\cite{bubenik2012statistical}, consisting of encoding persistence
diagrams as a collection of real-valued one-Lipschitz functions 
that are called
persistence landscapes; see Section \ref{sec:landscapes}.
The advantage of landscapes --- and, more generally, of
any function-valued summaries of persistent homology ---
is that we can analyze them using
existing techniques and theories from nonparametric statistics.

We have in mind two scenarios
where multiple persistence diagrams arise:

{\bf Scenario 1:}
We have a random sample of compact sets
$K_1,\ldots,K_n$ drawn
from a probability distribution on the space
of compact sets.
Each set $K_i$ gives rise to a persistence diagram which in turn yields
a persistence landscape function $\lambda_i$.
An analogous sampling scenario is the one where we observe a sample of $n$
random Morse
functions $f_1,\ldots,f_n$ from a common probability distribution. Each such
function $f_i$ induces a persistent
diagram built from its sub-level set filtration, which can again
be encoded by a landscape $\lambda_i$. 
The goal is to use the observed landscapes
$\lambda_1,\ldots, \lambda_n$ to infer 
the mean landscape
\mbox{$\mu = \mathbb{E}(\lambda_i)$.}

{\bf Scenario 2:}
We have a very large dataset
with $N$ points.
There is a diagram $D$ and landscape $\lambda$
corresponding to some filtration built on the data.
When $N$ is large, computing $D$ is prohibitive.
Instead, we draw $n$ subsamples, each of size $m$.
We compute a diagram and landscape for each subsample
yielding landscapes
$\lambda_1,\ldots,\lambda_n$.
(Assuming $m$ is much smaller than $N$, these subsamples
are essentially independent and identically distributed.)
Then we are interested in estimating
\mbox{$\mu =\mathbb{E}(\lambda_i)$},
which can be regarded as an approximation of $\lambda$.
Two questions arise:
how far are the $\lambda_i$'s from their mean $\mu$
and how far is $\mu$ from $\lambda$.
We focus on the first question in this paper.

In both sampling scenarios, we 
study the statistical behavior as the number of persistence
diagrams $n$ grows. We will then analyze the stochastic limiting
behavior of the average landscape, as well as the speed of convergence to such 
limit. Specifically, the contributions of this papers are as follows:\nopagebreak
\begin{enum}
\item We show that the average persistence landscape converges weakly to a Gaussian process and we
find the rate of convergence of that process.
\item We show that a statistical procedure
known as the bootstrap leads to valid confidence bands
for the average landscape. We provide an algorithm to compute confidence bands and illustrate it on a few real and simulated examples. 
\item We define a new functional summary of persistent homology,
which we call the \emph{silhouette}. 
\end{enum}

As the proofs are rather technical, we defer the interested reader to the appendices.

\section{Persistence Diagrams and Landscapes} \label{sec:landscapes}
\vskip-5pt

Formally, a (finite)
persistence diagram is a set of real intervals $\{ (b_i,d_i)
\}_{i \in I}$ where $I$ is a finite set.  We represent a persistence
diagram as the finite multiset of points $D = \left\{
  (\frac{b_i+d_i}{2},\frac{d_i-b_i}{2})\right\}_{i \in I}$.  Given a positive real
number $T$, we say that $D$ is $T$-bounded if for each point~$(x,y) =
\left(\frac{d+b}{2}, \frac{d-b}{2}\right) \in D$, we have $0 \leq b \leq d \leq
T$. We denote by $\dgmspace$ the space of all positive, finite,
$T$-bounded persistence diagrams.

A persistence landscape, introduced in \cite{bubenik2012statistical},
is a set of continuous, piecewise linear 
functions~\mbox{$\lscape \colon  \Z^{+} \times \R \to \R$} which provides an
encoding of a persistence diagram.  
To define the landscape, consider the set of functions created by tenting
each persistence point $p = (x,y)= \left( \frac{b+d}{2}, \frac{d-b}{2}  \right)
\in \dgm$
to the base line $x=0$ as with the following function:
\begin{equation}\label{eq:triangle}
 \Lambda_p(t) ~=~
 \begin{cases}
  t-x+y & t \in [x-y, x] \\
  x+y-t & t \in (x,  x+y] \\
  0 & \text{otherwise}
 \end{cases}
 ~=~
 \begin{cases}
  t-b & t \in [b, \frac{b+d}{2}] \\
  d-t & t \in (\frac{b+d}{2}, d] \\
  0 & \text{otherwise}.
 \end{cases}
\end{equation}
Notice that $p$ is itself on the graph of $\Lambda_p(t)$.
We obtain
an arrangement of curves by overlaying the graphs of 
the functions~\mbox{$\{ \Lambda_p \}_{p \in \dgm}$}; see~\figref{fig:landscape}.  

\begin{figure}
\centering
\includegraphics[height=1.7in]{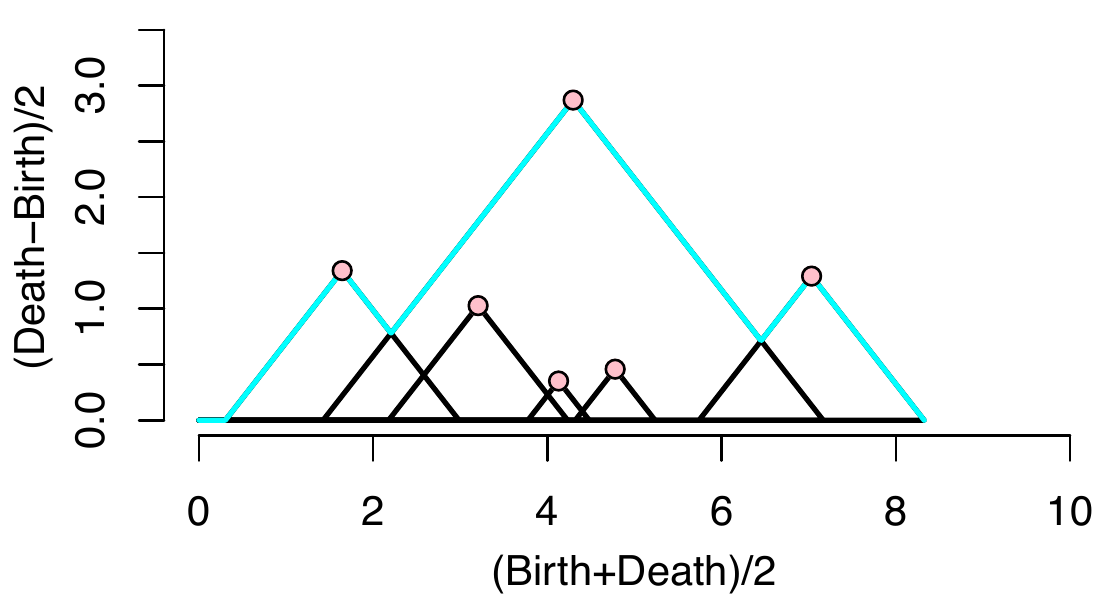}
\caption{The pink circles are the points in a persistence diagram $D$.
          Each point $p$ corresponds to a function $\Lambda_p$ given in \eqref{eq:triangle},
          and the landscape $\lscape(k, \cdot)$ is the $k$-th largest
          of the arrangement of the graphs of $\{ \Lambda_p \}$.
          In particular, the cyan curve is the landscape $\lscape(1,\cdot)$.}
\label{fig:landscape}
\end{figure}

The persistence landscape of $\dgm$ is just a  summary of this arrangement.
Formally, the persistence landscape of $D$ is the collection of functions
\begin{equation}\label{eq:landscape}
 \lscape_{\dgm}(k,t) = \underset{p \in \dgm}{\text{kmax}} ~ \Lambda_p(t), \quad
 t \in [0,T], k \in \mathbb{N},
\end{equation}
where kmax is the $k$th largest value in the set; in particular,
$1$max is the usual maximum~function. We set $\lscape_{\dgm}(k,t) = 0$ if the set $\{ \Lambda_p(t), p \in \dgm\}$ contains less than
$k$ points.
From the definition of persistence landscape, we immediately observe that
$\lscape_{\dgm}(k,\cdot)$ is one-Lipschitz,  since $\Lambda_p$ is one-Lipschitz.
We denote by $\lscapespace$ the space of persistence landscapes corresponding to $\dgmspace$.\\
For ease of exposition, in this paper we only focus on the case $k=1$,
and set $\lscape(t) = \lscape_{\dgm}(1,t)$.  However,
the results we present hold for~$k > 1$.


\section{Weak Convergence of Landscapes}
\label{ss::weak}
\vskip-5pt

Let $P$ be a probability distribution on $\lscapespace$, and
let $\lscape_1, \ldots, \lscape_n \sim P$.
We define the mean landscape as
$$
\mu(t) = \mathbb{E}[ \lscape_{i}(t)], \quad t \in [0,T].
$$
The mean landscape is an unknown function that
we would like to estimate.
We estimate $\mu$ with the sample average
$$
\overline{\lscape}_n(t) = \frac{1}{n}\sum_{i=1}^n \lscape_{i}(t), \quad t \in [0,T].
$$
Note that since 
$\mathbb{E}(\overline{\lscape}_n(t))=\mu(t)$,
we have that $\overline{\lscape}_n$ is a point-wise unbiased estimator of the unknown
function $\mu$.
Our goal is then quantify how close
the resulting estimate is to the function $\mu$.
To do so, we first need to explore the statistical properties
of $\overline{\lscape}_n$.
\cite{bubenik2012statistical} showed that $\overline{\lscape}_n$ converges pointwise to $\mu$ 
and that the pointwise Central Limit Theorem holds.
In this section we extend these results, proving the uniform convergence of the average landscape. 
In particular, we show that the process
\begin{equation}\label{eq:landscapeprocess}
  \Big\{ \sqrt{n}\left(\overline{\lscape}_n(t) - \mu(t)\right) \Big\}_{t \in [0,T]}
\end{equation}
converges weakly (see below) to a Gaussian process on $[0,T]$
and we establish the rate of convergence.

Let
\begin{equation}
\label{eq::classF} 
{\cal F} = \{ f_t  \}_{0\leq t\leq T}
\end{equation}
where
$f_t:{\lscapespace}\to \mathbb{R}$ is defined by
$f_t(\lscape) = \lscape(t)$.
Writing $P(f) = \int f dP$ and letting $P_n$ be the empirical measure
that puts mass $1/n$ at each $\lscape_i$,
we can and will regard \eqref{eq:landscapeprocess} as an empirical process indexed by $f_t \in
\mathcal{F}$. Thus, for $t \in [0,T]$, we will write
\begin{equation}
\label{eq::empirical}
\mathbb{G}_n(t) ~=~  \mathbb{G}_n(f_t):=\sqrt{n}\left(\overline{\lscape}_n(t) - \mu(t)\right) 
  ~=~ \frac{1}{\sqrt{n}}\sum_{i=1}^n \left(f_t(\lscape_i) - \mu(t)\right) ~=~
  \sqrt{n}(P_n - P)(f_t)
\end{equation}
We note that
the function
$F(\lambda) = T/2$ is a measurable envelope for ${\cal F}$.

A Brownian bridge is a mean zero Gaussian process on
the set of bounded functions from ${\cal F}$ to $\mathbb{R}$
such that the covariance between any pair
$f_1,f_2\in {\cal F}$ has the form
$\int f_1(u) f_2(u) dP(u) - \int f_1(u) dP(u) \int  f_2(u) dP(u)$.
A sequence of random objects $X_n$
converges weakly to $X$, written $X_n\rightsquigarrow X$,
if $\mathbb{E}^*(f(X_n)) \to \mathbb{E}(f(X))$
for every bounded continuous function $f$.
(The symbol $\mathbb{E}^*$ is an outer expectation, which is used for technical reasons;
the reader can think of this as an expectation.)
\begin{figure}
 \centering
 \includegraphics[height=1.8in]{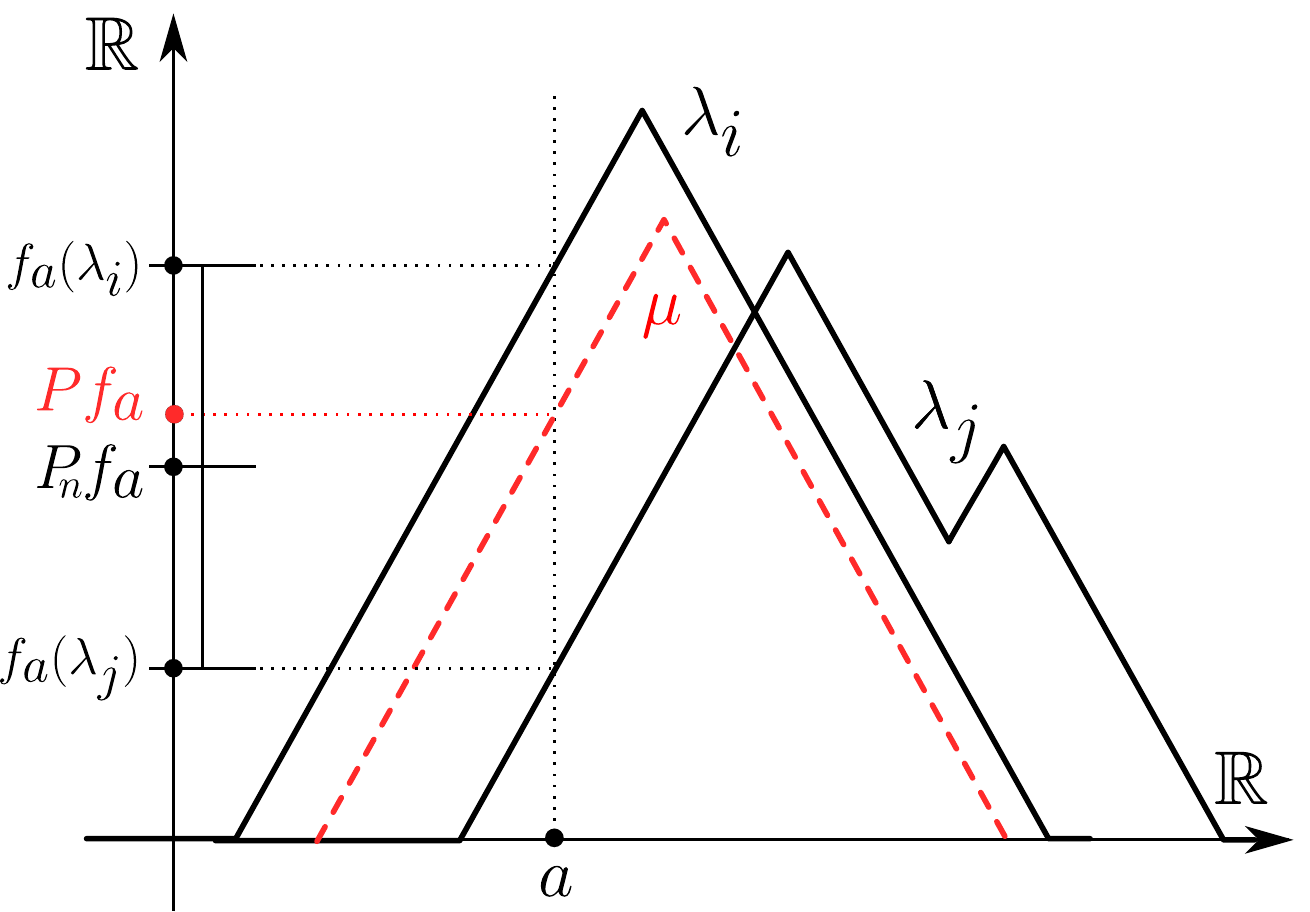}
 \caption{We illustrate the empirical process $\mathbb{G}_n(f_t)$.  Given a set of landscapes $\{\lambda_i\}_{1 \leq i \leq n}$,
 each real-value $a$ corresponds to a function $f_a \colon \lscapespace \to \R$ defined by $f_a(\lambda_i) = \lambda_i(a)$.
 ${P}_nf_a$ is then the average over all sampled landscapes.  If $\mu$ is the true mean landscape,
 then $Pf_a = \mu(a)$ and $\mathbb{G}_n(f_a)$ is the normalized difference $\sqrt{n}(\mathbb{P}_nf_a - Pf_a)$.}
 \label{fig:gaussianprocess}
\end{figure}
Thus, we arrive at the following~theorem:

\begin{theorem}[Weak Convergence of Landscapes, Theorem 2.4 in \cite{usBootstrap2013}]
\label{th::weak}
Let $\mathbb{G}$ be a Brownian bridge
with covariance function
$
\kappa(t,s) = \int f_t(\lambda)f_s(\lambda) dP(\lambda) - \int f_t(\lambda)
dP(\lambda) \int f_s(\lambda) dP(\lambda)$, 
for $t,s \in [0,T].$ 
Then
$\mathbb{G}_n \rightsquigarrow \mathbb{G}.$
\end{theorem}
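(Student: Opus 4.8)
The plan is to read the statement as the assertion that the class $\mathcal{F}=\{f_t\}_{0\le t\le T}$ of \eqref{eq::classF} is $P$-Donsker: by the very definition of weak convergence of empirical processes this is exactly $\mathbb{G}_n\rightsquigarrow\mathbb{G}$, and the limiting tight Gaussian process is automatically mean zero with covariance $\operatorname{Cov}_P(f_t,f_s)=\kappa(t,s)$, i.e.\ the Brownian bridge indexed by $\mathcal{F}$. So everything reduces to verifying a complexity condition on $\mathcal{F}$, and I would do this through a bracketing-entropy bound, because here the brackets are essentially free. As already noted in the excerpt, $F\equiv T/2$ is a bounded (hence square-integrable) measurable envelope for $\mathcal{F}$, so no moment hypothesis on $P$ is needed; in fact the argument will be uniform over all $P$ on $\lscapespace$.

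The one structural fact to exploit is that every $\lscape\in\lscapespace$ is $1$-Lipschitz, which transfers directly to the index: $|f_t(\lscape)-f_s(\lscape)|=|\lscape(t)-\lscape(s)|\le |t-s|$ uniformly over $\lscape$. Hence, for $\epsilon>0$, partition $[0,T]$ into $\lceil T/\epsilon\rceil$ subintervals of length $\le\epsilon$ with representatives $t_1,\dots,t_m$, and set $\ell_j=f_{t_j}-\epsilon$, $u_j=f_{t_j}+\epsilon$. Then each $f_t$ with $t$ in the $j$-th subinterval satisfies $\ell_j\le f_t\le u_j$ pointwise, so the brackets $[\ell_j,u_j]$ cover $\mathcal{F}$, and $\|u_j-\ell_j\|_{L^2(P)}=2\epsilon$; therefore $N_{[\,]}(2\epsilon,\mathcal{F},L^2(P))\le \lceil T/\epsilon\rceil$. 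The bracketing numbers thus grow only linearly in $1/\epsilon$, so
\[
\int_0^{\infty}\sqrt{\log N_{[\,]}(\epsilon,\mathcal{F},L^2(P))}\,d\epsilon \;\le\; \int_0^{T}\sqrt{\log\!\big(1+2T/\epsilon\big)}\,d\epsilon \;<\;\infty,
\]
since $\sqrt{\log(1/\epsilon)}$ is integrable near $0$ and the integrand vanishes once $\epsilon$ exceeds the envelope bound. I would also record the (routine) pointwise measurability of $\mathcal{F}$: for each fixed $\lscape$ the map $t\mapsto f_t(\lscape)$ is continuous, so suprema over $t\in[0,T]$ agree with suprema over a countable dense set, and the usual measurability requirements are met. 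With a square-integrable envelope, a finite bracketing-entropy integral, and measurability in hand, the bracketing Donsker theorem (van der Vaart and Wellner) applies and gives that $\mathcal{F}$ is $P$-Donsker; reading off the covariance of the limit yields $\mathbb{G}_n\rightsquigarrow\mathbb{G}$ with $\mathbb{G}$ the stated Brownian bridge.

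The only real content is the bracketing-number estimate, and it is the single place where the geometry of persistence landscapes enters — but it enters solely through their uniform $1$-Lipschitz property, which makes $\mathcal{F}$ about as tame as a one-parameter family can be, so I do not expect a genuine obstacle. The remaining items (outer expectations, the pointwise-measurable cover) are standard boilerplate for classes continuously indexed by a compact parameter. I note that the quantitative rate of convergence mentioned in the introduction is a separate and more delicate estimate and is not required for the weak-convergence statement itself.
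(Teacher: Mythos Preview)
Your argument is correct. Note, however, that the paper does not actually supply a proof of this theorem: it is quoted as Theorem~2.4 of \cite{usBootstrap2013}, and Appendix~\ref{append:proofs} begins with the proof of Theorem~\ref{th::CLT}. So there is no ``paper's own proof'' to compare against directly.

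That said, the complexity estimate that \emph{does} appear in the paper---inside the proof of Theorem~\ref{th::CLT}---is the uniform covering-number bound $\sup_Q N(\mathcal{F}^*, L_2(Q), \|F\|_2\,\varepsilon)\le 2/\varepsilon$, obtained from precisely the same $1$-Lipschitz observation you use. Your bracketing route is a close variant: because the bound $|f_t(\lscape)-f_s(\lscape)|\le |t-s|$ is uniform in $\lscape$, the brackets $[f_{t_j}-\epsilon,\,f_{t_j}+\epsilon]$ are genuine (not just $L^2$-balls), so bracketing entropy is available and the Donsker conclusion follows from the standard bracketing theorem without invoking uniform-entropy/VC machinery. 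The covering-number formulation has the practical advantage of plugging directly into the quantitative results from \cite{chernozhukov2013anti} used for Theorems~\ref{th::CLT}--\ref{th::adaptiveBand}; for the bare weak-convergence statement, either argument is equally short and rests on the same single structural fact about landscapes.
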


Next, we describe the rate of convergence 
of the maximum of the normalized empirical process $\mathbb{G}_n$ to the 
maximum of the limiting distribution $\mathbb{G}$.
The maximum is relevant for statistical inference
as we shall see in the next section.

For each $t \in [0,T]$, let $\sigma(t)$ be the standard deviation of $\sqrt{n}\, \overline{\lscape}_n(t)
$, i.e.
\begin{equation}
\label{eq::sigma}
\sigma(t)= \sqrt{n \text{Var}(\overline{\lscape}_n(t))}= \sqrt{\text{Var}(f_t(\lscape_1))}.
\end{equation}

\begin{theorem}[Uniform CLT]
\label{th::CLT}
Suppose that $\sigma(t)>c>0$ in an interval $[t_* \, , t^*] \subset [0,T]$, for some constant  $c$. 
Then there exists a random variable $W\stackrel{d}{=} \sup_{t \in [t_* \, , t^*]} |\mathbb{G}(f_t) |$ such that
$$
\sup_{z\in \R} \left| \mathbb{P}\left(\sup_{t \in [t_* \, , t^*]} |\mathbb{G}_n(t)| \leq z
\right) - \mathbb{P}\left(W \leq z \right) \right| = 
O \left( \frac{(\log n)^{7/8} }{n^{1/8}}\right).
$$
\end{theorem}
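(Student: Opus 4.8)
The plan is to recognize $\sup_{t\in[t_*,t^*]}|\mathbb{G}_n(t)|$ as the norm of an empirical process over a VC-type class and to invoke the quantitative Gaussian approximation theorem for such suprema due to Chernozhukov, Chetverikov and Kato; the qualitative weak convergence of \thmref{th::weak} identifies the limit law but is not, by itself, quantitative. First I would package the relevant function class. Restricting to the sub-interval and symmetrizing, set
$$
\mathcal{G} \;=\; \{\, f_t : t_*\le t\le t^* \,\}\;\cup\;\{\, -f_t : t_*\le t\le t^* \,\},
$$
so that $\sup_{t\in[t_*,t^*]}|\mathbb{G}_n(t)| = \|\mathbb{G}_n\|_{\mathcal{G}}$ and, on the limit side, $\sup_{t\in[t_*,t^*]}|\mathbb{G}(f_t)| = \|\mathbb{G}\|_{\mathcal{G}}$. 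I then check the hypotheses of the abstract theorem: (i) $F\equiv T/2$ is a bounded envelope for $\mathcal{G}$, as already observed for $\mathcal{F}$; (ii) because every persistence landscape is one-Lipschitz, $|f_t(\lambda)-f_s(\lambda)| = |\lambda(t)-\lambda(s)| \le |t-s|$, so $\|f_t-f_s\|_{L^2(Q)}\le|t-s|$ for \emph{every} probability measure $Q$, whence the $\varepsilon$-covering number of $\mathcal{G}$ in $L^2(Q)$ is $O((t^*-t_*)/\varepsilon)$ uniformly in $Q$, a degree-one uniform-entropy bound, so $\mathcal{G}$ is VC-type with an absolute exponent $v$ and constant $A$; (iii) $c^2 < \sigma(t)^2 = \mathrm{Var}_P(\pm f_t) \le (T/2)^2$ for all elements of $\mathcal{G}$, the lower bound being precisely the hypothesis. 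Pointwise measurability of $\mathcal{G}$ is immediate from the continuity of $t\mapsto f_t$.

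With these inputs, the Gaussian approximation theorem of Chernozhukov, Chetverikov and Kato yields a random variable $W\stackrel{d}{=}\|\mathbb{G}\|_{\mathcal{G}}=\sup_{t\in[t_*,t^*]}|\mathbb{G}(f_t)|$ for which
$$
\sup_{z\in\R}\Big|\, \mathbb{P}\big(\|\mathbb{G}_n\|_{\mathcal{G}}\le z\big) - \mathbb{P}(W\le z)\,\Big| \;\le\; C_{T,c}\left(\frac{(v\log n)^{7}}{n}\right)^{1/8},
$$
where $C_{T,c}$ and the exponent $v$ depend only on $T$ and $c$. Internally this bound is obtained by discretizing $\mathcal{G}$ on a fine net, applying the high-dimensional central limit theorem for maxima (whose Kolmogorov rate contributes the power $\log^7$), controlling the net error on the empirical side by a Bernstein/chaining estimate that uses $\|f_t-f_s\|_{L^2(P)}\le|t-s|$, controlling it on the Gaussian side by Gaussian anti-concentration, $\sup_z\mathbb{P}(|W-z|\le\eta)\le 4\eta\big(\mathbb{E}\|\mathbb{G}\|_{\mathcal{G}}+1\big)$, together with the finiteness $\mathbb{E}\|\mathbb{G}\|_{\mathcal{G}}\le C_T$ furnished by Dudley's entropy integral over the bound of (ii), and finally optimizing the net spacing against a truncation level --- which is what forces the exponent $1/8$ on $n$ and $7/8$ on $\log n$. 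Since $v$, $A$, $T$, $c$ are all absolute given the model, the right-hand side is $O\!\big((\log n)^{7/8} n^{-1/8}\big)$, as claimed.

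The only genuinely delicate part is the bookkeeping in that last paragraph: feeding the precise VC characteristics $(A,v)$ of $\mathcal{G}$ and the envelope $T/2$ into the abstract inequality, tracking every stray power of $\log n$ that the finite-dimensional comparison and the discretization generate, and balancing the net spacing and truncation so that no single error term dominates with a worse exponent. By contrast, verifying that $\mathcal{G}$ is VC-type, the Gaussian anti-concentration bound, and the elementary sandwiching of a Kolmogorov distance by a coupling error plus an anti-concentration term are all routine once the one-Lipschitz property of landscapes has been used to bound the entropy.
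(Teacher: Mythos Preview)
Your proposal is correct and follows essentially the same route as the paper: verify a degree-one uniform entropy bound for $\{f_t\}_{t\in[t_*,t^*]}$ via the one-Lipschitz property of landscapes, invoke the Chernozhukov--Chetverikov--Kato coupling for suprema of VC-type empirical processes, convert the coupling into a Kolmogorov-distance bound via Gaussian anti-concentration (this is where the lower variance bound $\sigma(t)>c$ enters), and optimize to obtain $(\log n)^{7/8}/n^{1/8}$. The only cosmetic differences are that the paper carries out the coupling $\to$ anti-concentration $\to$ optimization steps explicitly rather than packaging them as a single black-box theorem, and your symmetrization $\mathcal{G}=\{f_t\}\cup\{-f_t\}$ is unnecessary since the CCK coupling theorem used in the paper already concerns $\sup_g|\mathbb{G}_n(g)|$.
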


{\bf Remarks:}
The assumption in \Cref{th::CLT} that the standard deviation function $\sigma$
is positive over a subinterval of $[0,T]$ can be replaced with the weaker
assumption of positivity of $\sigma$ over a finite collection of sub-intervals
without changing the result. We
have stated the theorem in this simplified form for ease of readability.
Furthermore, it may be possible to improve the term $n^{-1/8}$ in the rate
using what is known as a ``Hungarian embedding''
(see Chapter 19 of \cite{van2000asymptotic}).
We do not pursue this point further, however.

\section{The Bootstrap for Landscapes}
\label{ss::bootstrap}
\vskip-5pt
Recall that our goal is to use the observed landscapes
$(\lscape_{1},\ldots, \lscape_{n})$
to make inferences about
$\mu(t) = \mathbb{E}[ \lscape_{i}(t)]$, where $0 \leq t \leq T$.
Specifically, in this paper we will seek to construct an asymptotic {\it confidence band} for $\mu$. A pair of functions
$\ell_n,u_n \colon \R \to \R$ is an asymptotic $(1-\alpha)$ confidence band for
$\mu$ if, as $n \rightarrow \infty$,
\begin{equation}
\mathbb{P}\Bigl( \ell_n(t) \leq \mu(t) \leq u_n(t)\ {\rm for\ all\ }t\Bigr) \geq 1-\alpha - O(r_n),
\end{equation}
where $r_n = o(1)$. Confidence bands are valuable tools for statistical
inference, as they allow to quantify and visualize the uncertainty about the
mean persistence landscape function $\mu$ and to screen out topological noise.
 
Below we will describe an algorithm for constructing the funcions $\ell_n$ and
$u_n$ from
the sample of landscapes $\lscape_1^n := (\lscape_1,\ldots,\lscape_n)$,  will prove that it yields
an asymptotic $(1-\alpha)$-confidence band for the unknown mean landscape
function $\mu$ and determine its rate $r_n$.
Our algorithm relies on the use of the {\it bootstrap}, a simulation-based statistical method for constructing
confidence set under minimal assumptions on the data generating distribution $P$; see \cite{efron1979bootstrap, efron1993introduction,van2000asymptotic}.
There are several different versions of the bootstrap.
This paper uses
the {\em multiplier bootstrap}.

Let $\xi_1^n=(\xi_1,\ldots, \xi_n)$ where $\xi_i \sim N(0,1)$ 
(Gaussian random variables wit mean 0 and variance 1)
for all $i$ and
define the multiplier bootstrap process
\begin{equation}
\label{eq::empiricalBoot}
\tilde{\mathbb{G}}_n(f_t) = \tilde{\mathbb{G}}_n(\lscape_1^n, \xi_1^n)(f_t):= 
\frac{1}{\sqrt{n}} \sum_{i=1}^n \xi_i \left( f_t(\lscape_i)- {\overline \lscape}_n(t) \right) \, , \; t \in [0,T].
\end{equation}
Let $\tilde Z(\alpha)$ be the unique value such that
\begin{equation}\label{eq::zalpha}
\mathbb{P}\left(
\sup_t 
\Bigl| \tilde{\mathbb{G}}_n(f_t) \Bigr| > \tilde Z(\alpha)\ \Biggm| \ \lscape_1, \ldots, \lscape_n\right)=
\alpha.
\end{equation}
Note that the only random quantities in this definition are
$\xi_1,\ldots, \xi_n \sim N(0,1)$.
Hence, $\tilde Z(\alpha)$ can be approximated by Monte Carlo simulation.
Let $\tilde \theta=\sup_{t \in [0,T]} |\tilde{\mathbb{G}}_n(f_t)|$ 
be from a bootstrap sample. Repeat the bootstrap B times, yielding values $\tilde \theta_1, \dots, \tilde \theta_B$. Let
\begin{equation}\label{eq::sim}
\tilde Z(\alpha)= \inf \left\{z: \frac{1}{B} \sum_{j=1}^B I(\tilde \theta_j>z) \leq \alpha \right\}.
\end{equation}
We may take $B$ as large as we like so the Monte Carlo error arbitrarily small.
Thus, when using bootstrap methods,
one ignores the error in approximating $\tilde{Z}(\alpha)$ as defined in
(\ref{eq::zalpha}) with its simulation approximation as defined in
(\ref{eq::sim}).
The multiplier bootstrap confidence band is
$\{ (\ell_n(t),u_n(t)):\ 0\leq t \leq T\}$, where
\begin{equation}
\ell_n(t)= \overline{\lscape}_n(t) - \frac{\tilde Z(\alpha)}{\sqrt{n}},\ \ \ 
u_n(t) = \overline{\lscape}_n(t) + \frac{\tilde Z(\alpha)}{\sqrt{n}}.
\end{equation}

The steps of the algorithm are given in Algorithm \ref{alg::bootstrap}.

\begin{algorithm}
  \caption{The multipler bootstrap algorithm.}\label{alg::bootstrap}
  \begin{algorithmic}[1]
  \INPUT Landscapes ${\lscape}_1,\ldots, {\lscape}_n$; confidence level $1-\alpha$; number of bootstrap samples $B$
  \OUTPUT confidence functions $\ell_n, u_n \colon \R \to \R$
    \State Compute the average $\overline{\lscape}_n(t) = \frac{1}{n}\sum_{i=1}^n {\lscape}_i(t)$
    \For{$j=1 \text{ to } B$} 
      \State Generate $\xi_1,\ldots, \xi_n \sim N(0,1)$
      \State Set  $\tilde\theta_j = \sup_t n^{-1/2} |\sum_{i=1}^n \xi_i\ ({\lscape}_i(t) - \overline{\lscape}_n(t))|$
    \EndFor  
      \State Define $\tilde Z(\alpha) = \inf\bigl\{ z:\ \frac{1}{B}\sum_{j=1}^B I( \tilde\theta_j > z) \leq \alpha\bigr\}$
    \State Set
    $\ell_n(t) = \overline{\lscape}_n(t) - \frac{\tilde Z(\alpha)}{\sqrt{n}}$ and
    $u_n(t) = \overline{\lscape}_n(t) + \frac{\tilde Z(\alpha)}{\sqrt{n}}$
    \State \textbf{return} $\ell_n(t), u_n(t)$
  \end{algorithmic}
\end{algorithm}

The accuracy of the coverage of the confidence band and the width of
the band are described in the next result, which follows from Theorem
\ref{th::CLT} and the analogous result for the multiplier bootstrap
process, stated in Proposition \ref{prop::bootstrap} in Appendix
\ref{sec::appendixB}.

\begin{theorem}[Uniform Band]
\label{th::band}
Suppose that $\sigma(t)>c>0$ in an interval $[t_* \, , t^*] \subset [0,T]$, for some constant  $c$. Then
\begin{equation}
\mathbb{P}\Bigl( \ell_n(t) \leq \mu(t) \leq u_n(t)\ {\rm for\ all\ }t \in [t_* \, , t^*]\Bigr) \geq 1-\alpha - 
O \left( \frac{(\log n)^{7/8}}{n^{1/8}} \right).
\end{equation}
Also,
$\sup_t \left(u_n(t) - \ell_n(t)\right) = O_P\left( \sqrt{\frac{1}{n}}\right)$.
\end{theorem}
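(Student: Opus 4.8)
The plan is to derive both assertions from Theorem~\ref{th::CLT} and its bootstrap analogue, Proposition~\ref{prop::bootstrap}, by elementary bookkeeping on quantiles. First I would translate the coverage event into the empirical process language of Section~\ref{ss::weak}. Since $u_n(t)-\overline{\lscape}_n(t)=\overline{\lscape}_n(t)-\ell_n(t)=\tilde Z(\alpha)/\sqrt n$ does not depend on $t$, the event $\{\ell_n(t)\le\mu(t)\le u_n(t)\ \text{for all}\ t\in[t_*,t^*]\}$ is exactly $\{\sup_{t\in[t_*,t^*]}|\mathbb{G}_n(t)|\le\tilde Z(\alpha)\}$, with $\mathbb{G}_n$ as in \eqref{eq::empirical}, so the first claim amounts to showing $\mathbb{P}(\sup_{t\in[t_*,t^*]}|\mathbb{G}_n(t)|>\tilde Z(\alpha))\le\alpha+O((\log n)^{7/8}/n^{1/8})$. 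Because $\tilde Z(\alpha)$ is a conditional $(1-\alpha)$-quantile of $\sup_{t\in[0,T]}|\tilde{\mathbb{G}}_n(f_t)|$ and $[t_*,t^*]\subseteq[0,T]$, it is no smaller than the conditional $(1-\alpha)$-quantile of the restricted supremum $\sup_{t\in[t_*,t^*]}|\tilde{\mathbb{G}}_n(f_t)|$; replacing $\tilde Z(\alpha)$ by that restricted quantile only shrinks the event on the left, so it suffices to argue with it, and to this restricted bootstrap supremum Proposition~\ref{prop::bootstrap} applies directly.

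Second, I would fix a deterministic benchmark. Let $W$ be the Gaussian-supremum variable of Theorem~\ref{th::CLT}, with distribution function $G(z)=\mathbb{P}(W\le z)$, and put $z_\beta=\inf\{z:G(z)\ge 1-\beta\}$; the hypothesis $\sigma(t)>c>0$ on $[t_*,t^*]$ ensures $\mathbb{G}$ is nondegenerate there, so $W$ has a continuous distribution near its $(1-\alpha)$-quantile and $\beta\mapsto z_\beta$ is finite and continuous at $\beta=\alpha$. Let $\delta_n=O((\log n)^{7/8}/n^{1/8})$ be the rate in Proposition~\ref{prop::bootstrap}: it supplies an event $\Omega_n$ with $\mathbb{P}(\Omega_n)\ge 1-\delta_n$ on which the conditional law of $\sup_{t\in[t_*,t^*]}|\tilde{\mathbb{G}}_n(f_t)|$ given $\lscape_1^n$ is within Kolmogorov distance $\delta_n$ of $G$. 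Evaluating this at the quantile and using $\mathbb{P}(\sup_{t\in[t_*,t^*]}|\tilde{\mathbb{G}}_n(f_t)|\le\tilde Z(\alpha)\mid\lscape_1^n)\ge 1-\alpha$, on $\Omega_n$ we obtain $G(\tilde Z(\alpha))\ge 1-\alpha-\delta_n$ and hence $\tilde Z(\alpha)\ge z_{\alpha+\delta_n}$; the same comparison in the other direction gives $\tilde Z(\alpha)\le z_{\alpha-\delta_n}$, so by continuity of $\beta\mapsto z_\beta$ one also gets $\tilde Z(\alpha)=z_\alpha+o_P(1)$. In particular $\tilde Z(\alpha)=O_P(1)$, which settles the width statement immediately, since $\sup_t(u_n(t)-\ell_n(t))=2\tilde Z(\alpha)/\sqrt n=O_P(\sqrt{1/n})$.

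Third, for the coverage bound I would split on whether $\Omega_n$ occurs. On $\Omega_n$ the deterministic lower bound $\tilde Z(\alpha)\ge z_{\alpha+\delta_n}$ holds, hence
\[
\mathbb{P}\Bigl(\sup_{t\in[t_*,t^*]}|\mathbb{G}_n(t)|>\tilde Z(\alpha)\Bigr)\ \le\ \mathbb{P}\Bigl(\sup_{t\in[t_*,t^*]}|\mathbb{G}_n(t)|>z_{\alpha+\delta_n}\Bigr)+\mathbb{P}(\Omega_n^c).
\]
The threshold $z_{\alpha+\delta_n}$ is now nonrandom, so Theorem~\ref{th::CLT} bounds the first probability by $1-G(z_{\alpha+\delta_n})+\delta_n\le\alpha+2\delta_n$, using $G(z_\beta)\ge 1-\beta$, while the second is at most $\delta_n$. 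Adding up gives $\mathbb{P}(\ell_n\le\mu\le u_n\ \text{on}\ [t_*,t^*])\ge 1-\alpha-O((\log n)^{7/8}/n^{1/8})$, as claimed.

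The genuinely substantial content — the Berry--Esseen-type rate $O((\log n)^{7/8}/n^{1/8})$ for the Kolmogorov distance between the law of $\sup_{[t_*,t^*]}|\mathbb{G}_n|$ and that of the Gaussian supremum $W$, together with its bootstrap counterpart — is precisely what Theorem~\ref{th::CLT} and Proposition~\ref{prop::bootstrap} already encapsulate; these rest on a Gaussian approximation for suprema of empirical processes and on an anti-concentration inequality for $W$, which is where the nondegeneracy assumption $\sigma>c>0$ enters. Given those two inputs, the only real care needed in the present argument is to prevent the data-dependent threshold $\tilde Z(\alpha)$ from being substituted into the data-dependent process $\mathbb{G}_n$; this is handled by the split on $\Omega_n$, so that Theorem~\ref{th::CLT} is only ever invoked at a fixed level, with everything else being the routine quantile bookkeeping sketched above.
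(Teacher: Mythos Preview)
Your proposal is correct and takes precisely the paper's approach: the paper's proof simply reads ``Follows from Theorem~\ref{th::CLT} and Proposition~\ref{prop::bootstrap}. The second statement follows from the fact that $\tilde{Z}(\alpha)=O_P(1)$,'' and you have fleshed out exactly how those two inputs combine via the quantile-comparison and splitting argument on $\Omega_n$.

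One small caveat on the width claim: your derivation of the upper bound $\tilde Z(\alpha)\le z_{\alpha-\delta_n}$ does not quite go through as written, because $\tilde Z(\alpha)$ is the conditional $(1-\alpha)$-quantile of $\sup_{t\in[0,T]}|\tilde{\mathbb G}_n(f_t)|$ while Proposition~\ref{prop::bootstrap} only controls the law of $\sup_{t\in[t_*,t^*]}|\tilde{\mathbb G}_n(f_t)|$; you correctly exploited this discrepancy for the \emph{lower} bound, but the reverse inequality does not survive the same trick. This does not damage the conclusion, since $\tilde Z(\alpha)=O_P(1)$ follows directly from Borell--TIS concentration of the conditionally Gaussian process $\tilde{\mathbb G}_n$ together with the covering-number bound over all of $[0,T]$ established in the proof of Theorem~\ref{th::CLT}; the paper itself offers no justification for this step either.
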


The confidence band above has a constant width; that is, the width is the same for all $t$.
However, the empirical estimate $\overline{\lscape}(t)$ might be a more accurate estimator of~$\mu(t)$
for some~$t$ than others. This suggests that we may construct a more refined
confidence band whose width varies with $t$.
Hence, we construct an {\em adaptive confidence band} that has variable width.
Consider the standard deviation function $\sigma$, defined in \eqref{eq::sigma}, and its estimate
\begin{equation}
\label{eq::sigmaHat}
\hat \sigma_n(t):= \sqrt{\frac{1}{n} \sum_{i=1}^n [f_t(\lscape_i)]^2 - [
    \overline{\lscape}_n(t)) ]^2}, \quad t \in [0,T].
\end{equation}
Set $T_\sigma = \{t \in [0,T] \colon \sigma(t) > 0 \}$ and define the standardized empirical process 
\begin{equation}
\mathbb{H}_n(f_t) := \mathbb{H}_n(\lscape_1^n)(f_t):= 
\frac{1}{\sqrt{n}} \sum_{i=1}^n  \frac{ f_t(\lscape_i)- \mu(t) }{\sigma(t)},
\quad t \in T_\sigma
\end{equation}
and, for $\xi_1, \dots, \xi_n \sim N(0,1)$, define its multiplier bootstrap version
\begin{equation}
\label{eq::empiricalBootSigmaHat}
\hat{\mathbb{H}}_n(f_t) := \hat{\mathbb{H}}_n(\lscape_1^n, \xi_1^n)(f_t):= 
\frac{1}{\sqrt{n}} \sum_{i=1}^n \xi_i \frac{ f_t(\lscape_i)-
    \overline{\lscape}_n(t) }{\hat\sigma_n(t)}, \quad t \in T_\sigma.
\end{equation}
Just like in the construction of uniform bands, let $\hat Q(\alpha)$ be such
that
\begin{equation}
\mathbb{P}\left(
\sup_{t } 
\Bigl| \hat{\mathbb{H}}_n(\lambda_1^n, \xi_1^n)(f_t) \Bigr| > \hat Q(\alpha)\ \Biggm| \ \lambda_1,\ldots, \lambda_n \right)=
\alpha.
\end{equation}
Again, $\hat Q(\alpha)$ can be determined by simulation to arbitrary precision. 
The adaptive confidence band is
$\{ (\ell_{\sigma_n}(t),u_{\sigma_n}(t)):\ 0\leq t \leq T\}$, where
\begin{equation}
\ell_{\sigma_n}(t) = {\overline  \lscape}_n(t) - \frac{\hat Q(\alpha) \hat\sigma_n(t)}{\sqrt{n}},\ \ \ 
	u_{\sigma_n}(t) = {\overline  \lscape}_n(t) + \frac{\hat Q(\alpha) \hat\sigma_n(t)}{\sqrt{n}} .
\end{equation}

\begin{theorem}[Adaptive Band]
\label{th::adaptiveBand}
Suppose that $\sigma(t)>c>0$ in an interval $[t_* \, , t^*] \subset [0,T]$, for some constant  $c$. Then
\begin{equation}
    \mathbb{P}\Bigl( \ell_{\sigma_n}(t) \leq \mu(t) \leq u_{\sigma_n}(t)\ {\rm for\ all\ }t \in [t_*\, , t^*] \Bigr) \geq 1-\alpha - 
O\left( \frac{(\log n)^{1/2}}{n^{1/8}} \right).
\end{equation}
\end{theorem}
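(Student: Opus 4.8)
The plan is to run the argument behind \Cref{th::band} with the non\-standardized empirical process replaced by the studentized one, and then to account for the two extra errors created by the plug\-in estimator $\hat\sigma_n$. First I would rewrite the coverage event: for $t\in[t_*,t^*]$ the inequalities $\ell_{\sigma_n}(t)\le\mu(t)\le u_{\sigma_n}(t)$ hold simultaneously if and only if
\[
\sup_{t\in[t_*,t^*]}\frac{|\mathbb{G}_n(t)|}{\hat\sigma_n(t)}\;\le\;\hat Q(\alpha),
\]
with $\mathbb{G}_n(t)=\sqrt n(\overline{\lscape}_n(t)-\mu(t))$ as in \eqref{eq::empirical}. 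Since $\hat Q(\alpha)$ is defined through a supremum taken over all of $T_\sigma\supseteq[t_*,t^*]$, enlarging the index set only widens the band, so it suffices to lower bound $\mathbb{P}\big(\sup_{t\in[t_*,t^*]}|\mathbb{G}_n(t)|/\hat\sigma_n(t)\le\hat Q(\alpha)\big)$; equivalently, we may pretend the supremum defining $\hat Q(\alpha)$ is over $[t_*,t^*]$.

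Second, I would control the estimated standard deviation uniformly. The family $\lscape\mapsto f_t(\lscape)^2=\lscape(t)^2$ is uniformly bounded (by $F^2=T^2/4$) and Lipschitz in $t$, hence a $P$\-Donsker class with polynomial uniform covering numbers, so a uniform Bernstein/maximal inequality gives $\sup_{t\in[0,T]}|\hat\sigma_n(t)^2-\sigma(t)^2|=O_P(\sqrt{(\log n)/n})$; because $\sigma(t)>c>0$ on $[t_*,t^*]$, this yields $\sup_{t\in[t_*,t^*]}|\hat\sigma_n(t)-\sigma(t)|=O_P(\sqrt{(\log n)/n})$ with $\hat\sigma_n\ge c/2$ there on an event of probability $1-o(1)$. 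Combining this with $\sup_t|\mathbb{G}_n(t)|=O_P(\sqrt{\log n})$ (a consequence of \Cref{th::weak} and the envelope bound), the $\hat\sigma_n$\-normalized process is uniformly close to the studentized process $\mathbb{H}_n$:
\[
\sup_{t\in[t_*,t^*]}\Bigl|\frac{\mathbb{G}_n(t)}{\hat\sigma_n(t)}-\mathbb{H}_n(f_t)\Bigr|
=\sup_{t\in[t_*,t^*]}|\mathbb{G}_n(t)|\,\Bigl|\frac{1}{\hat\sigma_n(t)}-\frac{1}{\sigma(t)}\Bigr|
=O_P\!\left(\frac{\log n}{\sqrt n}\right)=o_P\!\left(\frac{(\log n)^{1/2}}{n^{1/8}}\right).
\]
The identical computation, carried out conditionally on $\lscape_1^n$, shows that replacing $\hat\sigma_n$ by $\sigma$ inside $\hat{\mathbb{H}}_n$ perturbs the conditional law of $\sup_t|\hat{\mathbb{H}}_n(f_t)|$ — and hence the quantile $\hat Q(\alpha)$ — by a term of the same negligible order.

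Third, with the $\hat\sigma_n$\-versus\-$\sigma$ discrepancies absorbed, the problem reduces to Gaussian approximation and multiplier\-bootstrap consistency for the \emph{standardized} process $\mathbb{H}_n$, indexed by $\{f_t/\sigma(t)\}_{t\in[t_*,t^*]}$, which is again uniformly bounded and Lipschitz. I would invoke the studentized analogue of \Cref{th::CLT} and \Cref{prop::bootstrap}, established in \Cref{sec::appendixB} by the same coupling argument: there is a tight Gaussian process $\mathbb{H}$ on $[t_*,t^*]$ with unit pointwise variance such that both
$\sup_z|\mathbb{P}(\sup_t|\mathbb{H}_n(f_t)|\le z)-\mathbb{P}(\sup_t|\mathbb{H}(f_t)|\le z)|$ and the corresponding bootstrap statement for $\hat{\mathbb{H}}_n$ are $O((\log n)^{1/2}/n^{1/8})$. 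The key point producing the improved exponent is that, the limiting process having been normalized to unit variance, the Gaussian anti\-concentration inequality controls the density of $\sup_t|\mathbb{H}(f_t)|$ by a factor of order $\sqrt{\log n}$ only — as opposed to the larger power of $\log n$ incurred in the un\-normalized setting of \Cref{th::CLT} — which is exactly what turns $(\log n)^{7/8}$ into $(\log n)^{1/2}$. Chaining the approximations (coverage event $\approx\{\sup_t|\mathbb{H}_n(f_t)|\le\hat Q(\alpha)\}$; $\hat Q(\alpha)\approx$ the $(1-\alpha)$ quantile of $\sup_t|\mathbb{H}(f_t)|$; and $\sup_t|\mathbb{H}_n(f_t)|$ close in distribution to $\sup_t|\mathbb{H}(f_t)|$), each step costing $O((\log n)^{1/2}/n^{1/8})$ after an anti\-concentration smoothing, gives the stated bound.

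I expect the main obstacle to be the second step: propagating the plug\-in error of $\hat\sigma_n$ through \emph{both} the pivotal statistic and the bootstrap distribution while preserving the sharpened rate. One must show that a normalizer perturbation of size $O_P(\sqrt{(\log n)/n})$ moves the relevant distribution functions by at most that size times a density bound of order $\sqrt{\log n}$ — comfortably $o((\log n)^{1/2}/n^{1/8})$ — uniformly in $z$ and, for the bootstrap, conditionally on the data on an event of probability $1-o((\log n)^{1/2}/n^{1/8})$. Restricting to $[t_*,t^*]$, where $\sigma$ is bounded away from zero, is what makes all of this possible: near the zero set of $\sigma$ the standardized process is unbounded and none of these uniform controls is available.
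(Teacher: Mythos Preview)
Your proposal is correct and mirrors the paper's proof: rewrite the coverage event as $\{\sup_t|\mathbb H_n(f_t)\,\sigma(t)/\hat\sigma_n(t)|\le\hat Q(\alpha)\}$, control $\sup_t|\hat\sigma_n/\sigma-1|$ by $O_P((\log n)^{1/2}/n^{1/2})$ via a Talagrand-type inequality (\Cref{lem::sigmas}), compare $\hat Q(\alpha)$ to the true quantile of $\sup_t|\mathbb H|$ (\Cref{lem::quantile}), couple $\sup_t|\mathbb H_n|$ to $\sup_t|\mathbb H|$ (\Cref{lem::standardCLT}), and finish with anti-concentration for the unit-variance process (\Cref{theorem::anti-concentration}). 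One small correction to your heuristic for the sharper exponent: for the standardized limit $\mathbb H$ the anti-concentration bound of \Cref{theorem::anti-concentration} carries only the \emph{constant} factor $A\,a(|W|)$ (the process is fixed, so $a(|W|)$ does not depend on $n$), whereas in the un-normalized case \Cref{theorem::anti-concentrationSigma} contributes the extra $\sqrt{\log(\underline\sigma/\varepsilon)}\asymp\sqrt{\log n}$; it is the \emph{removal} of that $\sqrt{\log n}$ factor, and the resulting re-balancing of $\gamma$ in the coupling theorem, that turns $(\log n)^{7/8}$ into $(\log n)^{1/2}$.
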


\iftoggle{journalv}{
   \section{New Functional Summaries}
   \input{othersummaries}
}{ 
   \section{The Weighted Silhouette}
   
The $k$th persistence landscape $\lambda(k,t)$ can be interpreted as
a summary function of the persistence diagrams.  A \emph{summary function}
is a functor that takes a persistence diagram and outputs a real-valued 
continuous function.  If the diagram corresponds to the distance function to a random
set, then we have a probability distribution on the space of summary functions
induced by a probability distribution on the original sample space.
%

The persistence landscape is just one of many functions that could be used
to summarize a persistence diagram.
In this section, we introduce a new family of summary functions
called {\em weighted silhouettes}. 

Consider a persistence diagram with $m$ off diagonal points.
In this formulation, we take the 
weighted average of the triangle functions defined in \eqref{eq:triangle}:
\begin{equation}
 \phi(t) = \frac{\sum_{j=1}^m  w_j \Lambda_j(t)}{\sum_{j=1}^m  w_j}.
\end{equation}
Consider two points of the persistence diagram, representing the pairs
$(b_i,d_i)$ and $(b_j,d_j)$. In general, 
we would like to have $w_j \geq w_i$ whenever
$|d_j-b_j| \geq |d_i - b_i|$.
In particular,
let $\phi(t)$ have weights $ w_j = |d_j - b_j|^p$.

\begin{definition}[Power-Weighted Silhouette]
For every $0 < p \leq \infty$
we define the power-weighted silhouette
$$
\phi^{( p)}(t) = \frac{\sum_{j=1}^m |d_j - b_j|^p \Lambda_j(t)}{\sum_{j=1}^m |d_j - b_j|^p}.
$$
\end{definition}

\begin{figure}[tbh]
\begin{center}
\includegraphics[scale=0.85]{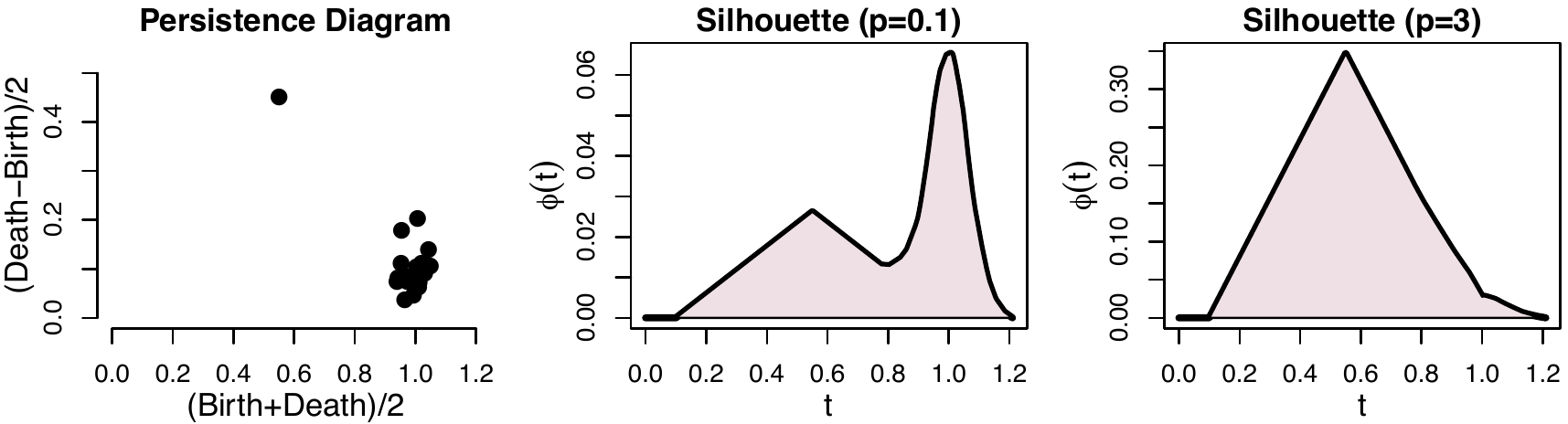}
\end{center}
\caption{An example of power-weighted silhouettes for 
different choices of $p$. Note that the axes are on different scales. The weighted silhouette is one-Lipschitz.}
\label{fig::ExSilhouettes}
\end{figure}
The value $p$ can be though of as a trade-off parameter between
uniformly treating all pairs in the persistence diagram 
and considering only the most persistent pairs.  Specifically, when $p$ is small,
$\phi^{( p)}(t)$ is dominated by the effect of low persistence pairs.
Conversely, when $p$ is large,
$\phi^{( p)}(t)$ is dominated by the most persistent pair;
see Figure \ref{fig::ExSilhouettes}.

The power-weighted silhouette preserves the property of being one-Lipschitz. 
In fact, this is true for any choice of non-negative weights.
Therefore all the result of Sections \ref{ss::weak} and \ref{ss::bootstrap} hold 
for the weighted silhouette, by simply replacing $\lambda$ with $\phi$. 
In particular, consider $\phi_1, \ldots, \phi_n \sim P_\phi$. 
Applying theorems \ref{th::weak}, \ref{th::CLT}, \ref{th::band} 
and \ref{th::adaptiveBand}, we obtain:
\begin{corollary}
  The empirical process 
        $\sqrt{n}\left(n^{-1}\sum_{i=1}^n \phi_i(t) - \mathbb{E}[\phi(t)]\right)$ 
        converges weakly to a Brownian bridge. The rate of 
        convergence of the maximum of this process to the maximum 
        of the limiting distribution is $O\left(\frac{(\log n)^{7/8}}{n^{1/8}} \right)$.
\end{corollary}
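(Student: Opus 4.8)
The plan is to observe that Theorems~\ref{th::weak}--\ref{th::adaptiveBand}, and their proofs in the appendices, use only two structural features of a persistence landscape $\lambda$ viewed as an element of $C[0,T]$: that $0 \le \lambda(t) \le T/2$ for every $t$, so that the constant $F \equiv T/2$ is a measurable envelope for the index class $\mathcal{F} = \{f_t\}_{0 \le t \le T}$; and that $\lambda$ is one-Lipschitz in $t$, so that $|f_t(\lambda) - f_s(\lambda)| \le |t-s|$, which makes $\mathcal{F}$ a Lipschitz-in-parameter class and hence controls its bracketing and uniform entropy numbers. The Donsker property behind \thmref{th::weak}, and the Gaussian and multiplier-bootstrap approximations of the supremum (with the $n^{-1/8}$ and $\log n$ factors) behind \thmref{th::CLT}, \thmref{th::band} and \thmref{th::adaptiveBand}, all follow from these two facts together with the assumed lower bound $\sigma(t) > c$ on a subinterval. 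So the only thing to do is to check that $\phi^{(p)}$ inherits the same two features, and then to quote the four theorems with $\lambda$ replaced by $\phi$.

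First I would verify the two features. Each tent function $\Lambda_j$ of \eqref{eq:triangle} satisfies $0 \le \Lambda_j(t) \le (d_j - b_j)/2 \le T/2$ and is one-Lipschitz. Since $\phi^{(p)} = \sum_j w_j \Lambda_j / \sum_j w_j$ with $w_j = |d_j - b_j|^p \ge 0$ is a convex combination of the $\Lambda_j$, it is itself one-Lipschitz and takes values in $[0,T/2]$; the same holds in the limiting case $p = \infty$, where $\phi^{(\infty)}$ is the tent function of a most persistent pair. It is also piecewise linear and continuous, hence an element of $C[0,T]$, and it depends measurably on the underlying random object, so the induced distribution $P_\phi$ on silhouettes and the empirical process $\sqrt{n}(n^{-1}\sum_i \phi_i(\cdot) - \E[\phi(\cdot)])$ are set up exactly as in Section~\ref{ss::weak}.

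Then I would apply the theorems. Because the class $\mathcal{F}_\phi = \{f_t : \phi \mapsto \phi(t)\}_{0 \le t \le T}$ has constant envelope $T/2$ and is Lipschitz of constant one in the parameter $t$, \thmref{th::weak} gives weak convergence of the silhouette empirical process to a Brownian bridge with covariance $\kappa_\phi(t,s) = \int f_t f_s\, dP_\phi - \int f_t\, dP_\phi \int f_s\, dP_\phi$; and, under the corresponding assumption that $\sigma_\phi(t) = \sqrt{\text{Var}(f_t(\phi_1))} > c$ on a subinterval, \thmref{th::CLT} gives the rate $O((\log n)^{7/8}/n^{1/8})$ for the Kolmogorov distance between $\sup_t |\mathbb{G}_{\phi,n}(t)|$ and $\sup_t |\mathbb{G}_\phi(t)|$, with \thmref{th::band} and \thmref{th::adaptiveBand} transferring in the same way.

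The hard part, such as it is, is not a calculation but a bookkeeping one: confirming that the appendix proofs genuinely depend on $\lambda$ only through the envelope bound and the one-Lipschitz property, and not on any finer combinatorial structure of the landscape's piecewise-linear graph; and checking that the measurability and $C[0,T]$-valuedness needed even to define the silhouette empirical process survive the degenerate $p = \infty$ case and ties among the persistence values, which affect neither the envelope nor the Lipschitz constant.
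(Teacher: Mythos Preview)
Your proposal is correct and follows the same approach as the paper: the paper simply observes that the power-weighted silhouette is one-Lipschitz (because it is a convex combination of the one-Lipschitz tent functions $\Lambda_j$), and then states that all of the results of Sections~\ref{ss::weak} and~\ref{ss::bootstrap} hold for the silhouette by replacing $\lambda$ with $\phi$ and invoking Theorems~\ref{th::weak}--\ref{th::adaptiveBand}. Your write-up is in fact more explicit than the paper's, since you also verify the envelope bound $0 \le \phi^{(p)}(t) \le T/2$ (which the appendix proofs do use but the paper leaves implicit) and you address the $p=\infty$ edge case.
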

\begin{corollary}
  The multiplier bootstrap algorithm of 
        Algorithm \ref{alg::bootstrap} can be used to construct  a uniform 
        confidence band for $\{\mathbb{E}[\phi(t)] \}_{t \in [t_*, t^*]}$ with 
        coverage at least $1-\alpha - O\left( \frac{(\log n)^{7/8}}{n^{1/8}} \right)$ 
        and an adaptive confidence band with coverage at least $1-\alpha - 
  O\left( \frac{(\log n)^{1/2}}{n^{1/8}} \right)$ , where $[t_*,t^*] \subset [0,T]$ 
       is such that $\sqrt{\text{Var}(\phi(t))}>c>0$ for all $t \in [t_*,t^*]$ and some constant $c$. 
\end{corollary}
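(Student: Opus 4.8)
The plan is to observe that the power-weighted silhouette $\phi^{(p)}$ lies in exactly the same function space $\lscapespace$ as the persistence landscape, so that, after pushing the distribution on diagrams (equivalently, on the sampled compact sets or Morse functions) forward through the map $\dgm \mapsto \phi^{(p)}$ to obtain a distribution $P_\phi$ on $\lscapespace$, \thmref{th::band} and \thmref{th::adaptiveBand} apply with $\lscape$ replaced by $\phi$. Two facts make this work. First, $\phi^{(p)}$ is one-Lipschitz: writing $\alpha_j = w_j/\sum_k w_k \ge 0$ with $\sum_j \alpha_j = 1$, we have $\phi^{(p)} = \sum_j \alpha_j \Lambda_j$, a convex combination of the one-Lipschitz tent functions $\Lambda_j$ of \eqref{eq:triangle}, and any convex combination of one-Lipschitz functions is one-Lipschitz. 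Second, $\phi^{(p)}$ is supported on $[0,T]$ and satisfies $0 \le \phi^{(p)}(t) \le T/2$, because $0 \le \Lambda_j(t) \le (d_j-b_j)/2 \le T/2$ for every off-diagonal point and every $t$ (the diagram being $T$-bounded), and a weighted average inherits these bounds. In particular the constant $F(\phi) = T/2$ is still a measurable envelope for the class ${\cal F}_\phi = \{ f_t : f_t(\phi) = \phi(t) \}_{0 \le t \le T}$, exactly as for ${\cal F}$ in \eqref{eq::classF}.

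Given these two structural facts I would then note that the proofs of \thmref{th::weak}, \thmref{th::CLT}, \thmref{th::band}, \thmref{th::adaptiveBand} and of \propref{prop::bootstrap} use the sampled functions only through their membership in the class of one-Lipschitz functions on $[0,T]$ bounded by $T/2$ --- this is all that enters the bracketing-entropy bound for ${\cal F}$, the envelope bound, and the Gaussian-multiplier coupling. Hence, with $\phi_1,\dots,\phi_n \sim P_\phi$ i.i.d.\ elements of that class, with mean $\mathbb{E}[\phi(t)]$ and standard-deviation function $\sigma_\phi(t) = \sqrt{\text{Var}(\phi(t))}$, the hypothesis $\sigma_\phi > c > 0$ on $[t_*,t^*]$ is precisely the hypothesis required, and the conclusions transfer verbatim: \thmref{th::band} (with $\phi$ in place of $\lscape$) gives the uniform band with coverage error $O((\log n)^{7/8} n^{-1/8})$, and \thmref{th::adaptiveBand} --- using for $\hat\sigma_n(t)$ the sample standard deviation of $\phi_1(t),\dots,\phi_n(t)$, cf.\ \eqref{eq::sigmaHat} --- gives the adaptive band with coverage error $O((\log n)^{1/2} n^{-1/8})$. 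In both cases the band is produced by running Algorithm~\ref{alg::bootstrap} on the inputs $\phi_1,\dots,\phi_n$.

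The only point needing a little care --- and the main, albeit mild, obstacle --- is measurability, which is what makes the above reduction rigorous rather than merely formal: one must check that $\dgm \mapsto \phi^{(p)}$ is a measurable map into $\lscapespace$ (so that the $\phi_i$ are genuine random functions and $P_\phi$, $\mathbb{E}[\phi(t)]$, $\text{Var}(\phi(t))$ are well defined), and that $t \mapsto \phi_i(t)$ is continuous so that each supremum over $t \in [t_*,t^*]$ appearing in the bands and in \eqref{eq::sim} coincides with the supremum over a countable dense subset, hence is measurable --- the same separability device used for the landscape. Both are immediate: a weighted average of finitely many continuous tent functions is continuous (indeed piecewise linear), and the weights $|d_j-b_j|^p$ depend continuously, hence measurably, on the diagram. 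With measurability settled, the transfer is exact and the stated rates follow.
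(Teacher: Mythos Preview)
Your proposal is correct and follows essentially the same approach as the paper: the paper simply notes that the power-weighted silhouette is one-Lipschitz (as a convex combination of one-Lipschitz tent functions) and bounded by $T/2$, and therefore Theorems~\ref{th::band} and~\ref{th::adaptiveBand} apply verbatim with $\lambda$ replaced by $\phi$. Your write-up is in fact more careful than the paper's --- you spell out the envelope bound, the measurability and separability checks, and the push-forward construction of $P_\phi$ --- but the underlying argument is the same. One terminological quibble: the entropy bound used in the proofs is a covering-number bound (see the proof of \thmref{th::CLT}), not a bracketing-entropy bound, though this does not affect your reasoning.
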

}

\section{Examples}
\vskip-5pt
In Topological Data Analysis, persistent homology is classically used to encode the evolution of the homology of filtered simplicial complexes built on top of data sampled from a metric space - see \cite{cdso-psvrc-12}.
For example, given a metric space $(\X ,d_{\X})$ and a probability distribution $P_{\X}$ supported on $\X$, one can sample $m$ points, $K = \{ X_1, \ldots, X_m \}$, i.i.d. from $P_{\X}$ and consider the Vietoris-Rips filtration built on top of these points: $\sigma = [X_{i_0}, \ldots, X_{i_k}] \in R(K,a)$ if and only if $d_{\X}(X_{i_j}, X_{i_l}) \leq a$ for any $j,l \in \{0, \ldots k \}$. The persistent homology of this filtration induces a persistent diagram $D$ and a landscape $\lambda$. Sampling $n$ such $K$, one obtains $n$ persistence landscapes $\lscape_1, \ldots, \lscape_n$.
In this section, we adopt this setting to illustrate our results on two examples, one real and one simulated. 

%
%

\subsection{Earthquake data}
\label{section::earthquake}
\begin{figure}[tbh]
\begin{center}
\includegraphics[scale=0.15]{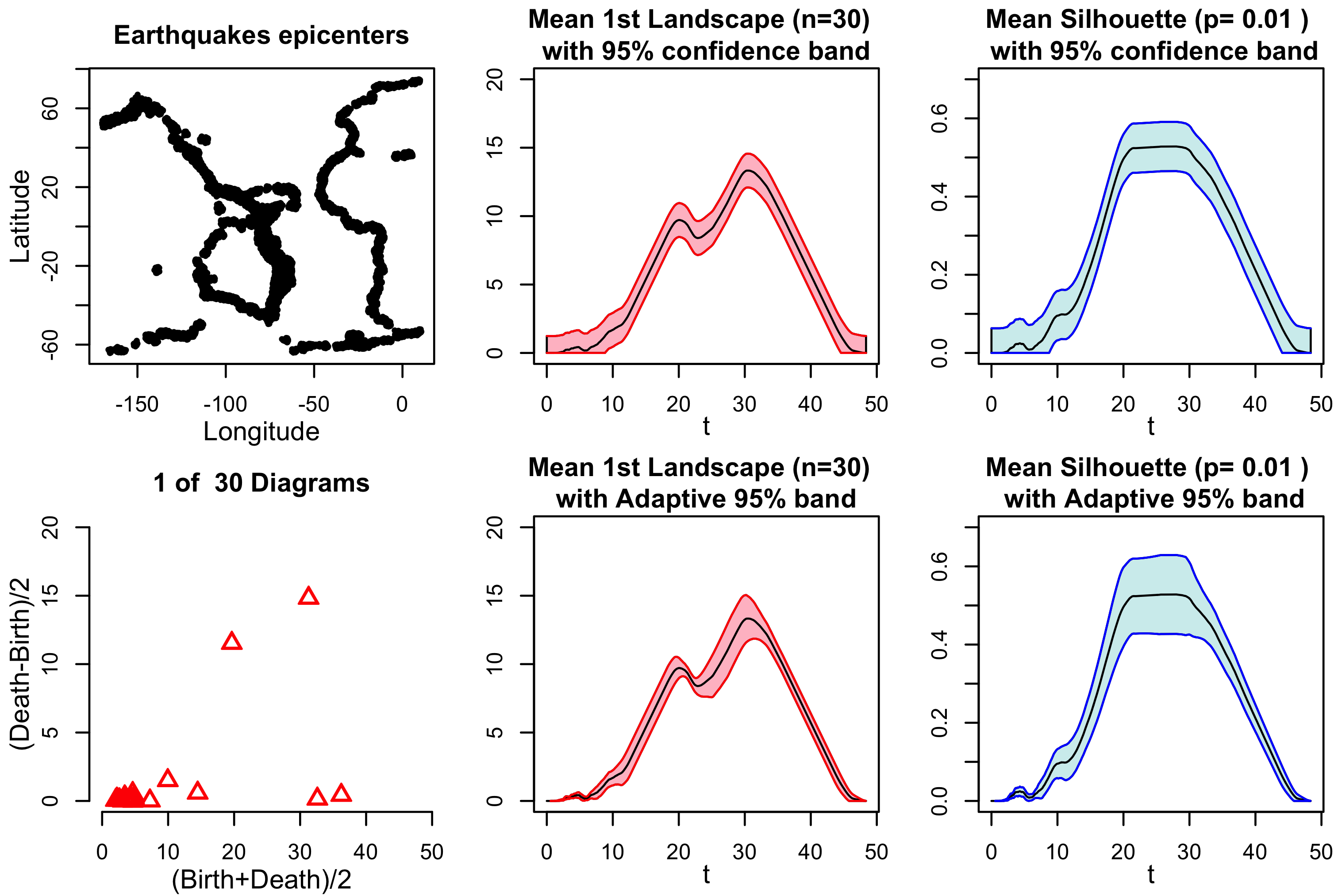}
\end{center}
\caption{Top Left: Sample space of epicenters of 8000 earthquakes. Bottom Left: one of the 30 persistence diagrams. 
        Middle: uniform and adaptive $95\%$ confidence bands for the mean landscape $\mu(t)$. Right: uniform and adaptive 95\% confidence bands for the mean weighted silhouette $\mathbb{E}[\phi^{(0.01)}(t)]$.}
\label{fig::earthquake}
\end{figure}
Figure \ref{fig::earthquake} (left) shows the epicenters of 8000 earthquakes 
in the latitude/longitude rectangle $[-75,75] \times [-170,10]$ of magnitude 
greater than 5.0 recorded between 1970 and 2009.\footnote{USGS Earthquake
Search.   http://earthquake.usgs.gov/earthquakes/search/.}
We randomly sample $m=400$ epicenters, construct the Vietoris-Rips filtration (using the Euclidean distance), 
compute the persistence diagram (Betti 1) using 
Dionysus\footnote{Dionysus is a C++ library for computing persistent homology,
developed by Dmitriy Morozov.  http://mrzv.org/software/dionysus/.} and the corresponding landscape 
function. We repeat this procedure $n=30$ times and compute the mean 
landscape~$\overline{\lscape}_n$. Using the algorithm given in 
Algorithm \ref{alg::bootstrap}, we obtain the uniform 95\% confidence band of 
Theorem \ref{th::band} and the adaptive 95\% confidence band of 
Theorem \ref{th::adaptiveBand}. See 
Figure \ref{fig::earthquake} (middle). Both the confidence bands 
have coverage around $95\%$ for the mean landscape $\mu(t)$ that 
is attached to the distribution induced by the sampling~scheme.
Similarly, using the same $n=30$ persistence diagrams we construct 
the corresponding weighted silhouettes using $p=0.01$ and 
construct uniform and adaptive $95\%$ confidence bands for 
the mean weighted silhouette $\mathbb{E}[\phi^{(0.01)}(t)]$. 
See Figure \ref{fig::earthquake} (right).  Notice that, for
most $t \in [0, T]$, the adaptive confidence band is
tighter than the fixed-width confidence band.

\subsection{Toy Example: Rings}
\begin{figure}[thb]
\begin{center}
\includegraphics[scale=0.15]{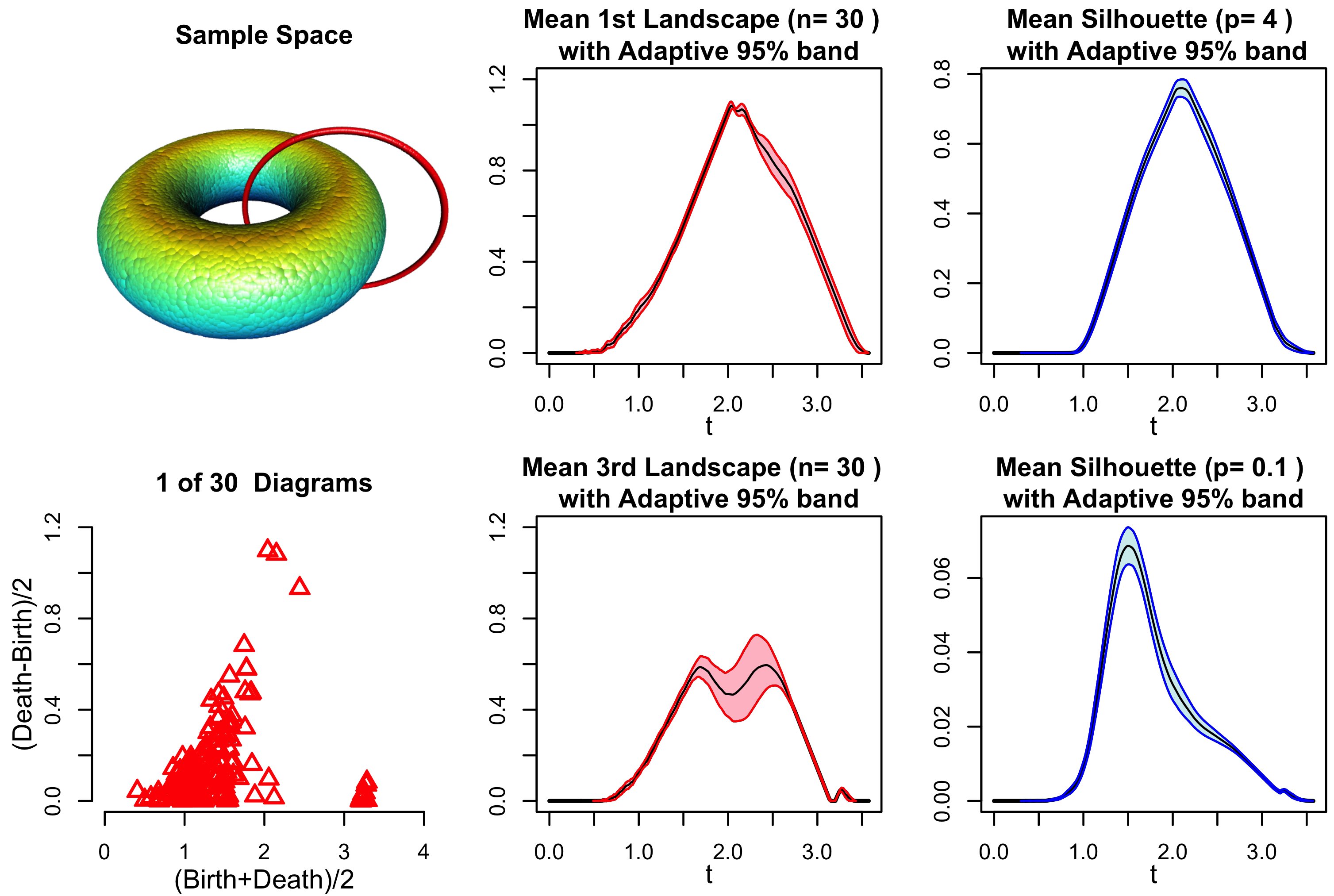}
\end{center}
\caption{Top Left: The sample space. 
         Bottom Left: one of the 30 persistence diagrams. 
         Middle: adaptive 95\% confidence bands for the mean first landscape $\mu_1(t)$ and mean third landscape $\mu_3(t)$.  
         Right: adaptive 95\% confidence bands for the mean weighted silhouettes $\mathbb{E}[\phi^{(4)}(t)]$ and $\mathbb{E}[\phi^{(0.1)}(t)]$.}
\label{fig::rings}
\end{figure}
In this example, we embed the torus $\mathbb{S}^1 \times \mathbb{S}^1$ 
in $\mathbb{R}^3$ and we use the rejection sampling algorithm 
of \cite{diaconis2012sampling} (R = 5, r = 1.8) to sample 10,000 points 
uniformly from the torus. Then we link it with a circle of 
radius 5, from which we sample 1,800 points; see Figure \ref{fig::rings} 
(top left). These $N=11,800$ points constitute the sample space. We 
randomly sample $m=600$ of these points, construct the Vietoris-Rips 
filtration, compute the persistence diagram (Betti 1) and the 
corresponding first and third landscapes and the silhouettes for $p=0.1$ and $p=4$.
We repeat this procedure $n=30$ times to construct 95\% adaptive confidence 
bands for the mean landscapes $\mu_1(t)$, $\mu_3(t)$ and the mean 
silhouettes $\mathbb{E}[\phi^{(4)}(t)]$, $\mathbb{E}[\phi^{(0.1)}(t)]$.
Figure \ref{fig::rings} (bottom left) shows one of the 30 persistence 
diagrams. In the persistence diagram, notice that three persistence pairs
are more persistent than the rest.  These correspond to the two nontrivial
cycles of the torus and the cycle corresponding to the circle.  
We notice that many of the points in the persistence diagram are
hidden by the first landscape.
However, as shown in the figure, the 
third landscape function
and the silhouette with parameter $p=0.1$ are 
able to detect the presence of these features.

\section{Discussion}
\vskip-5pt

We have shown how the bootstrap
can be used to give confidence bands for
Bubeknik's persistence landscape and 
for the persistence silhouette defined in this paper.
We are currently working on several extensions
to our work including the following:
allowing persistence diagrams with countably many points,
allowing $T$ to be unbounded,
and extending our results to new functional summaries of persistence diagrams.
In the case of subsampling (scenario 2 defined in the introduction),
we have provided accurate inferences for the mean function
$\mu$. We are investigating methods to
estimate the difference between $\mu$
(the mean landscape from subsampling)
and $\lambda$
(the landscape from the original large dataset).
Coupled with our confidence bands for $\mu$,
this could provide an efficient approach
to approximating the persistent homology
in cases where exact computations are prohibitive.

\bibliography{paper}

\begin{thebibliography}{19}
\providecommand{\natexlab}[1]{#1}
\providecommand{\url}[1]{\texttt{#1}}
\expandafter\ifx\csname urlstyle\endcsname\relax
  \providecommand{\doi}[1]{doi: #1}\else
  \providecommand{\doi}{doi: \begingroup \urlstyle{rm}\Url}\fi

\bibitem[Balakrishnan et~al.(2013)Balakrishnan, Fasy, Lecci, Rinaldo, Singh,
  and Wasserman]{us2013}
Sivaraman Balakrishnan, Brittany Fasy, Fabrizio Lecci, Alessandro Rinaldo,
  Aarti Singh, and Larry Wasserman.
\newblock Statistical inference for persistent homology, 2013.
\newblock arXiv preprint 1303.7117.

\bibitem[Bubenik(2012)]{bubenik2012statistical}
Peter Bubenik.
\newblock Statistical topology using persistence landscapes, 2012.
\newblock arXiv preprint 1207.6437.

\bibitem[Chazal et~al.(2014)Chazal, de~Silva, and Oudot]{cdso-psvrc-12}
F.~Chazal, V.~de~Silva, and S.~Oudot.
\newblock Persistence stability for geometric complexes.
\newblock \emph{To appear in Geometriae Dedicata (research report version
  available on arXiv:1207.3885)}, 2014.

\bibitem[Chazal et~al.(2012)Chazal, de~Silva, Glisse, and
  Oudot]{chazal2012structure}
Fr{\'e}d{\'e}ric Chazal, Vin de~Silva, Marc Glisse, and Steve Oudot.
\newblock The structure and stability of persistence modules, July 2012.
\newblock arXiv preprint 1207.3674.

\bibitem[Chazal et~al.(2013{\natexlab{a}})Chazal, Fasy, Lecci, Rinaldo, Singh,
  and Wasserman]{usBootstrap2013}
Fr{\'e}d{\'e}ric Chazal, Brittany~Terese Fasy, Fabrizio Lecci, Alessandro
  Rinaldo, Aarti Singh, and Larry Wasserman.
\newblock On the bootstrap for persistence diagrams and landscapes,
  2013{\natexlab{a}}.
\newblock arXiv preprint 1311.0376.

\bibitem[Chazal et~al.(2013{\natexlab{b}})Chazal, Labru\`{e}re, Glisse, and
  Michel]{cglm-orcpd-13}
Fr\'{e}d\'{e}ric Chazal, Catherine Labru\`{e}re, Marc Glisse, and Bertrand
  Michel.
\newblock Optimal rates of convergence for persistence diagrams in topological
  data analysis.
\newblock \emph{arXiv preprint 1305.6239}, 2013{\natexlab{b}}.

\bibitem[Chernozhukov et~al.(2012)Chernozhukov, Chetverikov, and
  Kato]{chernozhukov2012gaussian}
Victor Chernozhukov, Denis Chetverikov, and Kengo Kato.
\newblock Gaussian approximation of suprema of empirical processes, 2012.
\newblock arXiv preprint 1212.6885.

\bibitem[Chernozhukov et~al.(2013)Chernozhukov, Chetverikov, and
  Kato]{chernozhukov2013anti}
Victor Chernozhukov, Denis Chetverikov, and Kengo Kato.
\newblock Anti-concentration and honest adaptive confidence bands, 2013.
\newblock arXiv preprint 1303.7152.

\bibitem[Cohen-Steiner et~al.(2007)Cohen-Steiner, Edelsbrunner, and
  Harer]{stability}
David Cohen-Steiner, Herbert Edelsbrunner, and John Harer.
\newblock Stability of persistence diagrams.
\newblock \emph{Discrete Comput.~Geom.}, 37\penalty0 (1):\penalty0 103--120,
  2007.

\bibitem[Diaconis et~al.(2012)Diaconis, Holmes, and
  Shahshahani]{diaconis2012sampling}
Persi Diaconis, Susan Holmes, and Mehrdad Shahshahani.
\newblock Sampling from a manifold, 2012.
\newblock arXiv preprint 1206.6913.

\bibitem[Edelsbrunner et~al.(2002)Edelsbrunner, Letscher, and
  Zomorodian]{edels2002topological}
Herbert Edelsbrunner, David Letscher, and Afra Zomorodian.
\newblock Topological persistence and simplification.
\newblock \emph{Disc.\ Comput.\ Geom.}, 28\penalty0 (4):\penalty0 511--533,
  July 2002.

\bibitem[Efron(1979)]{efron1979bootstrap}
Bradley Efron.
\newblock Bootstrap methods: another look at the jackknife.
\newblock \emph{The Annals of Statistics}, pages 1--26, 1979.

\bibitem[Efron and Tibshirani(1993)]{efron1993introduction}
Bradley Efron and Robert Tibshirani.
\newblock \emph{An introduction to the bootstrap}, volume~57.
\newblock CRC press, 1993.

\bibitem[Munch et~al.(2013)Munch, Bendich, Turner, Mukherjee, Mattingly, and
  Harer]{munch2013probabilistic}
Elizabeth Munch, Paul Bendich, Katharine Turner, Sayan Mukherjee, Jonathan
  Mattingly, and John Harer.
\newblock Probabilistic {F}r\'echet means and statistics on vineyards, 2013.
\newblock arXiv preprint 1307.6530.

\bibitem[Talagrand(1994)]{talagrand1994sharper}
Michel Talagrand.
\newblock Sharper bounds for {G}aussian and empirical processes.
\newblock \emph{The Annals of Probability}, 22\penalty0 (1):\penalty0 28--76,
  1994.

\bibitem[Turner et~al.(2012)Turner, Mileyko, Mukherjee, and
  Harer]{turner2012frechet}
Katharine Turner, Yuriy Mileyko, Sayan Mukherjee, and John Harer.
\newblock Fr\'echet means for distributions of persistence diagrams, 2012.
\newblock arXiv preprint 1206.2790.

\bibitem[van~der Vaart(2000)]{van2000asymptotic}
Aad van~der Vaart.
\newblock \emph{Asymptotic Statistics}, volume~3.
\newblock Cambridge UP, 2000.

\bibitem[van~der Vaart and Wellner(1996)]{van1996weak}
Aad van~der Vaart and Jon~August Wellner.
\newblock \emph{Weak Convergence and Empirical Processes: With Applications to
  Statistics}.
\newblock Springer Verlag, 1996.

\bibitem[Zomorodian and Carlsson(2005)]{zc-cph-05}
Afra Zomorodian and Gunnar Carlsson.
\newblock Computing persistent homology.
\newblock \emph{Disc. Comp. Geom.}, 33\penalty0 (2):\penalty0 249--274, 2005.

\end{thebibliography}

\appendix
\section{Results from \cite{chernozhukov2013anti}}
\label{sec::appendixA}

In this appendix, we summarize the results from \cite{chernozhukov2013anti}
that are used in this paper.
Given a set of functions
${\cal G}$ and a probability measure $Q$, define
the
covering number
$N(\mathcal{G}, L_2(Q), \varepsilon)$
as the smallest number of balls of size $\varepsilon$
needed to cover ${\cal G}$, where the balls are defined
with respect to the norm
$||g||^2 = \int g^2(u) dQ(u)$.
Let $X_1, \dots, X_n$ be i.i.d. random variables taking values in a
measurable space $(S, \mathcal{S})$. Let $\mathcal{G}$ be a class of
functions defined on $S$ and uniformly bounded by a constant $b$, such
that the covering numbers of $\mathcal{G}$ satisfy
\begin{equation}
\label{eq::covering}
\sup_{Q} N(\mathcal{G}, L_2(Q), b\tau) \leq (a/\tau)^v \, , \; 0<\tau <1
\end{equation}
for some $a \geq e$ and $v \geq 1$ and where the supremum is taken over
all probability measures $Q$ on $(S, \mathcal{S})$. The set
$\mathcal{G}$ is said to be of VC type, with constants $a$ and $v$ and envelope
$b$.
Let $\sigma^2$ be a constant
such that $\sup_{g \in \mathcal{G}} E[g(X_i)^2] \leq \sigma^2 \leq
b^2$ and for some sufficiently large constant $C_1$,
denote $K_n:= C_1 v (\log n \vee \log(ab/\sigma))$.  Finally, let
$$
W_n:= \Vert \mathbb{G}_n\Vert_{\mathcal{G}}:= \sup_{g \in \mathcal{G}}
|\mathbb{G}_n(g)|
$$
denote the supremum of the empirical process $\mathbb{G}_n$.

\begin{theorem}[Theorem 3.1 in \cite{chernozhukov2013anti}]
\label{theorem::CCK3.1} Consider the setting specified above.
For any $\gamma \in (0,1)$, there is a random variable $W \stackrel{d}{=} \Vert
\mathbb{G} \Vert_\mathcal{G}$ such that
$$
\mathbb{P}\left( |W_n - W| > \frac{b K_n}{\gamma^{1/2}n^{1/2}}+
\frac{\sigma^{1/2}K_n^{3/4}}{\gamma^{1/2}n^{1/4}}+
\frac{b^{1/3}\sigma^{2/3}K_n^{2/3}}{\gamma^{1/3}n^{1/6}} \right) \leq C_2
\left(\gamma + \frac{\log n}{n} \right)
$$
for some constant $C_2$.
\end{theorem}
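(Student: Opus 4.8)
The plan is to follow the strategy of \cite{chernozhukov2013anti}: first reduce the supremum of the empirical process over the infinite class $\mathcal{G}$ to a maximum over a finite net, then invoke a high-dimensional central limit theorem for maxima of sums of independent random vectors to couple that finite maximum with the maximum of a Gaussian vector having the matching covariance, and finally use Gaussian anti-concentration together with Strassen's theorem to upgrade the resulting bound in Kolmogorov distance to the asserted almost-sure coupling of $W_n$ and $W$. Throughout, the free parameter $\gamma$ will play the role of the confidence level at which several Markov-type and truncation bounds are applied, which is what produces the $\gamma^{-1/2}$ and $\gamma^{-1/3}$ factors in the three error terms.

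First I would discretize. Using the VC-type hypothesis \eqref{eq::covering}, for a scale $\delta \in (0,1)$ to be chosen later pick a $\delta$-net $\mathcal{G}_\delta \subseteq \mathcal{G}$ for the $L_2(P)$ pseudometric with cardinality $p := |\mathcal{G}_\delta| \leq (a/\delta)^v$. The cost of replacing $W_n = \sup_{g \in \mathcal{G}} |\mathbb{G}_n(g)|$ by $\max_{g \in \mathcal{G}_\delta} |\mathbb{G}_n(g)|$ is the oscillation $\sup\{ |\mathbb{G}_n(g) - \mathbb{G}_n(g')| : g,g' \in \mathcal{G},\ \Vert g - g' \Vert_{L_2(P)} \leq \delta \}$; I would bound its expectation by a local maximal inequality (a Dudley entropy integral restricted to a $\delta$-ball, as in Chapter~2 of \cite{van2000asymptotic}), obtaining a quantity of order $\delta \sqrt{v \log(a/\delta)} + b\,v \log(a/\delta)/\sqrt{n}$, and then control its fluctuations above the mean by Talagrand's concentration inequality for empirical processes, charging the exceptional event to a slice of $\gamma$. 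The same Dudley bound, combined with the Borell--TIS inequality, controls the corresponding modulus of continuity of the Gaussian limit $\mathbb{G}$, so the passage from $W = \sup_{g\in\mathcal{G}}|\mathbb{G}(g)|$ to $\max_{g\in\mathcal{G}_\delta}|\mathbb{G}(g)|$ is equally cheap.

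Next comes the finite-dimensional Gaussian approximation. Writing $\max_{g \in \mathcal{G}_\delta} |\mathbb{G}_n(g)| = \max_{1 \leq j \leq 2p} n^{-1/2}\sum_{i=1}^n Y_{ij}$, where the $Y_{ij} = \pm( g_j(X_i) - Pg_j )$ are bounded in absolute value by $2b$ and have variance at most $\sigma^2$, I would apply the Gaussian approximation theorem for maxima of independent random vectors from \cite{chernozhukov2013anti} in dimension $2p$. This yields a centered Gaussian vector $\tilde Z$ whose covariance matches that of $(n^{-1/2}\sum_i Y_{ij})_{j}$, so that $\max_j |\tilde Z_j| \stackrel{d}{=} \max_{g\in\mathcal{G}_\delta} |\mathbb{G}(g)|$, together with a bound on $\sup_{z\in\R} | \mathbb{P}(\max_j n^{-1/2}\sum_i Y_{ij} \leq z) - \mathbb{P}(\max_j \tilde Z_j \leq z) |$ of order $n^{-1/6}$ times a power of $b$, $\sigma$ and a polylogarithmic factor in $p$, plus lower-order $n^{-1/4}$ and $n^{-1/2}$ contributions. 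To convert this Kolmogorov-distance bound into a coupling I would use Gaussian anti-concentration: the distribution function of a maximum of Gaussians is Lipschitz with constant of order $\sqrt{\log p}$ (Nazarov's inequality, as used in \cite{chernozhukov2013anti}), so by Strassen's theorem a bound $\Delta$ in Kolmogorov distance yields a coupling under which $\max_{g\in\mathcal{G}_\delta}|\mathbb{G}_n(g)|$ and $\max_j |\tilde Z_j|$ differ by order $\Delta$, up to a slack absorbed into $\gamma$.

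Finally I would assemble the chain $W_n \approx \max_{\mathcal{G}_\delta}|\mathbb{G}_n| \approx \max_j |\tilde Z_j| \stackrel{d}{=} \max_{\mathcal{G}_\delta}|\mathbb{G}| \approx W$ by the triangle inequality --- the first and last links costing the discretization and continuity-modulus terms, which are of order $\delta\sqrt{K_n} + bK_n/\sqrt n$ after collecting the logarithmic and $v$ factors into $K_n$, and the middle link costing the Gaussian-approximation term --- and then optimize over $\delta$. Balancing $\delta \sqrt{K_n}$ against the $n^{-1/6}$ term, and tidying constants using $a \geq e$, $v \geq 1$ and $\sigma \leq b$, reproduces the three summands $bK_n/(\gamma^{1/2}n^{1/2})$, $\sigma^{1/2}K_n^{3/4}/(\gamma^{1/2}n^{1/4})$ and $b^{1/3}\sigma^{2/3}K_n^{2/3}/(\gamma^{1/3}n^{1/6})$, with residual probability $C_2(\gamma + \log n/n)$. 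The main obstacle is the finite-dimensional step --- the high-dimensional CLT for maxima and the accompanying anti-concentration bound --- but these are available as black boxes from \cite{chernozhukov2013anti}; the real labor here is the discretization carried out with sharp local maximal inequalities and the careful propagation of the free parameter $\gamma$ through every estimate.
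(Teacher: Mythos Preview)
The paper does not prove this statement: Theorem~\ref{theorem::CCK3.1} is quoted verbatim in Appendix~\ref{sec::appendixA} as Theorem~3.1 of \cite{chernozhukov2013anti} and is used as a black box (notably in the proofs of Theorem~\ref{th::CLT} and Lemma~\ref{lem::standardCLT}). There is therefore no ``paper's own proof'' to compare your proposal against.

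That said, your sketch is a faithful high-level outline of the argument in \cite{chernozhukov2013anti}: discretization by a net of size controlled through the VC-type entropy bound, a high-dimensional Gaussian approximation for the maximum over the net, and an anti-concentration step to pass from Kolmogorov-distance closeness to a coupling. One technical remark: in the actual CCK argument the coupling over the finite net is not obtained via Strassen's theorem applied to a Kolmogorov bound, but rather via a direct coupling construction for maxima of sums of independent vectors (their own coupling theorem, building on Yurinskii-type arguments); invoking Strassen on a one-dimensional Kolmogorov bound would give a weaker statement, since Strassen converts a bound $\Delta$ on $\sup_z|F(z)-G(z)|$ into a coupling with $|X-Y|$ small \emph{except} on an event of probability $\Delta$, which is already the form you want, but getting the precise dependence on $\gamma$ requires the more refined coupling. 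Apart from that, the structure you describe is correct.
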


Let $\xi_1, \dots, \xi_n$ be i.i.d.\ $N(0,1)$ random variables
independent of $X_1^n:=\{X_1, \dots, X_n \}$. Let
$\xi_1^n:=\{\xi_1, \dots, \xi_n\}$. Define the Gaussian multiplier
process
$$
\tilde{\mathbb{G}}_n(g) = \tilde{\mathbb{G}}_n(X_1^n, \xi_1^n)(g):=
\frac{1}{\sqrt{n}} \sum_{i=1}^n \xi_i \left( g(X_i)- E_n[g(X_i)] \right), \quad
g \in \mathcal{G}.
$$
Lastly, for fixed $x_1^n$, let $\tilde W_n(x_1^n):=\sup_{g \in \mathcal{G}}
|\tilde{\mathbb{G}}_n(x_1^n, \xi_1^n)(g)|$ denote the
supremum of this process.

\begin{theorem}[Theorem 3.2 in \cite{chernozhukov2013anti}]
\label{theorem::CCK3.2}
Consider the setting specified above.
Assume that $b^2 K_n \leq n \sigma^2$. For any $\delta > 0$ there
exists a set $S_{n} \in \mathcal{S}^n$ such that $\mathbb{P}(S_{n}) \geq
1-3/n$ and for any $x_1^n \in S_{n}$ there is a random variable $W
\stackrel{d}{=} \sup_{g \in \mathcal{G}} |\mathbb{G} |$ such that
$$
\mathbb{P}\left( |\tilde{W}_n(x_1^n) - W| > \frac{\sigma K_n^{1/2}}{n^{1/2}}+
  \frac{b^{1/2}\sigma^{1/2}K_n^{3/4}}{n^{1/4}}+\delta \right) \leq C_3
\left( \frac{b^{1/2}\sigma^{1/2}K_n^{3/4}}{\delta n^{1/4}} +
  \frac{1}{n}\right)
$$
for some constant $C_3$.
\end{theorem}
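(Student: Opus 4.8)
The plan is to use the fact that, conditionally on the data $x_1^n$, the Gaussian multiplier process $\tilde{\mathbb{G}}_n(x_1^n,\xi_1^n) = \{ n^{-1/2}\sum_{i=1}^n \xi_i\,(g(x_i) - E_n g) \}_{g\in\mathcal{G}}$ is \emph{exactly} a centered Gaussian process indexed by $\mathcal{G}$ whose covariance kernel is the empirical covariance
$$
\hat c_n(g,g') \;=\; \frac1n\sum_{i=1}^n \big(g(x_i) - E_n g\big)\big(g'(x_i) - E_n g'\big),
$$
whereas the target $\mathbb{G}$ is the centered Gaussian process with covariance $c(g,g') = P(gg') - Pg\,Pg'$, which is tight since $\mathcal{G}$ is of VC type. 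Thus the assertion is, at bottom, a quantitative version of the fact that the supremum of a Gaussian process depends continuously on its covariance kernel, the rate being governed by (i) how uniformly well $\hat c_n$ approximates $c$ over $\mathcal{G}\times\mathcal{G}$ and (ii) the metric entropy of $\mathcal{G}$ recorded in \eqref{eq::covering}; as usual one passes from $\sup_g|\mathbb{G}(g)|$ to a one-sided supremum by adjoining $-\mathcal{G}$, which preserves the VC-type property.

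First I would fix the good event $S_n = \{\sup_{g,g'\in\mathcal{G}}|\hat c_n(g,g') - c(g,g')| \le \Delta_n\}$. Writing $\hat c_n(g,g') - c(g,g') = (P_n - P)\big[(g-Pg)(g'-Pg')\big] - (P_ng-Pg)(P_ng'-Pg')$, the second term is uniformly $O_P(\sigma^2 K_n/n)$ (each factor being $O_P(\sigma\sqrt{K_n/n})$ uniformly over $\mathcal{G}$ by the maximal inequality for VC-type classes, using $b^2K_n\le n\sigma^2$), and the first is an empirical process over the product class $\mathcal{H} = \{(g-Pg)(g'-Pg') : g,g'\in\mathcal{G}\}$, which is again of VC type with index $\lesssim v$, constant $\lesssim a$, envelope $\le 4b^2$, and $\sup_{\mathcal{H}}Ph^2 \lesssim b^2\sigma^2$; Talagrand's concentration inequality combined with the VC-type maximal inequality then gives $\sup_{g,g'}|\hat c_n - c| \lesssim b\sigma\sqrt{K_n/n} + b^2K_n/n \lesssim b\sigma\sqrt{K_n/n}$ with probability at least $1-3/n$ (again invoking $b^2K_n\le n\sigma^2$). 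This fixes $\Delta_n \asymp b\sigma\sqrt{K_n/n}$ and the measurable set $S_n$.

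Now fix $x_1^n\in S_n$. Using \eqref{eq::covering}, choose a minimal $\eta$-net $\mathcal{G}_\eta$ of $\mathcal{G}$ in $L_2(P)$ with $\eta$ of order $\sigma n^{-1/2}$, so that $\log|\mathcal{G}_\eta| \le v\log(a/\eta) \lesssim K_n$; on $S_n$ the intrinsic metric of $\tilde{\mathbb{G}}_n(x_1^n,\cdot)$ is comparable to $L_2(P)$, so $\mathcal{G}_\eta$ serves for both processes. By Dudley's entropy bound the expected oscillation of each of the centered Gaussian processes $\tilde{\mathbb{G}}_n(x_1^n,\cdot)$ and $\mathbb{G}$ over a net cell is $\lesssim \eta\sqrt{K_n} \asymp \sigma K_n^{1/2}/n^{1/2}$, which accounts for the first term of the bound and lets us replace $\tilde W_n(x_1^n)$ and $\|\mathbb{G}\|_{\mathcal{G}}$ by their maxima over $\mathcal{G}_\eta$. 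It remains to couple $\max_{\mathcal{G}_\eta}|\tilde{\mathbb{G}}_n(x_1^n)|$ --- a maximum of finitely many centered Gaussians with covariance matrix $[\hat c_n(g,g')]$, entrywise within $\Delta_n$ of $[c(g,g')]$ --- to $\max_{\mathcal{G}_\eta}|\mathbb{G}|$. Here I would invoke the comparison and anti-concentration bounds for maxima of Gaussian random vectors: the comparison bound turns the covariance gap $\Delta_n$, together with $\log|\mathcal{G}_\eta|\lesssim K_n$, into a bound on the Kolmogorov distance between the two laws, and the anti-concentration bound (boundedness of the density of $\max_{\mathcal{G}_\eta}|\mathbb{G}|$) upgrades this, via Strassen's theorem, to an actual coupling with a copy $W' \stackrel{d}{=} \max_{\mathcal{G}_\eta}|\mathbb{G}|$ whose deviation is of order $\sqrt{\Delta_n K_n} \asymp b^{1/2}\sigma^{1/2}K_n^{3/4}/n^{1/4}$, with a failure probability of the same order. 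Converting the random part of that deviation to a probability bound by Markov's inequality at level $\delta$ produces the $\frac{b^{1/2}\sigma^{1/2}K_n^{3/4}}{\delta n^{1/4}}$ term, while the residual Gaussian tails and the $1-3/n$ Talagrand event give the $1/n$ term. Taking $W := \sup_{g\in\mathcal{G}}|\mathbb{G}(g)|$ for the same realization of $\mathbb{G}$ that underlies $W'$ (so that recombining $W$ with $W' = \max_{\mathcal{G}_\eta}|\mathbb{G}|$ costs only the already-accounted net oscillation) yields a random variable with $W \stackrel{d}{=} \sup_g|\mathbb{G}(g)|$ obeying the asserted inequality on $S_n$.

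The main obstacle is the coupling step above carried out at the sharp rate: one must produce, for each fixed $x_1^n\in S_n$, a genuine copy of $\mathbb{G}$ jointly with $\tilde{\mathbb{G}}_n(x_1^n,\cdot)$ whose error is of order $\sqrt{\Delta_n K_n}$ rather than something larger --- a blunt comparison-plus-Strassen argument or a ``shared white noise'' coupling gives strictly worse powers of $K_n$ and $n$ --- and then one must do the entropy bookkeeping that makes the discretization and coupling errors balance to exactly $\sigma K_n^{1/2}/n^{1/2} + b^{1/2}\sigma^{1/2}K_n^{3/4}/n^{1/4}$. A secondary nuisance is measurability: $\mathcal{G}$ is uncountable, so the suprema, the event $S_n$, and the coupling must all be handled through the reduction to the finite net $\mathcal{G}_\eta$ and the outer-probability formalism. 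The hypothesis $b^2K_n\le n\sigma^2$ is used precisely to make the $b^2K_n/n$ remainders from the covariance-estimation and discretization steps subdominant.
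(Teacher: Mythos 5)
The paper you are working from does not prove this statement at all: Theorem~\ref{theorem::CCK3.2} appears in Appendix~\ref{sec::appendixA} purely as a quotation of Theorem~3.2 of \cite{chernozhukov2013anti}, imported as a black box and used later in Proposition~\ref{prop::bootstrap} and Lemma~\ref{lem::quantile}. So there is no in-paper proof to compare yours against; the only meaningful comparison is with the original Chernozhukov--Chetverikov--Kato argument.

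Measured against that, your sketch reconstructs the correct architecture: conditioning on $x_1^n$ to view $\tilde{\mathbb{G}}_n(x_1^n,\cdot)$ as an exact centered Gaussian process with the empirical covariance; defining $S_n$ as the event that the empirical covariance is uniformly within $\Delta_n \asymp b\sigma\sqrt{K_n/n}$ of the population covariance (controlled by Talagrand plus a VC-type bound on the product class, using $b^2K_n\le n\sigma^2$); discretizing to a $\sigma n^{-1/2}$-net so that the oscillation term $\sigma K_n^{1/2}/n^{1/2}$ appears; and then coupling the two finite Gaussian maxima via comparison plus anti-concentration, with the covariance gap converting to a deviation of order $\sqrt{\Delta_n K_n}= b^{1/2}\sigma^{1/2}K_n^{3/4}/n^{1/4}$ and Markov at level $\delta$ producing the $b^{1/2}\sigma^{1/2}K_n^{3/4}/(\delta n^{1/4})$ term. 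This is genuinely the CCK strategy, and your rate bookkeeping is consistent with the stated bound. The gap --- which you candidly flag yourself --- is that the entire difficulty of the theorem is concentrated in the one step you do not carry out: producing, for a \emph{fixed} $x_1^n$, an actual copy of $\sup_{\mathcal{G}}|\mathbb{G}|$ coupled to $\tilde W_n(x_1^n)$ at the rate $\sqrt{\Delta_n K_n}$. A generic ``comparison inequality plus Strassen'' argument does not obviously deliver that rate; in the source this is a bespoke Gaussian-to-Gaussian coupling lemma proved via a Slepian--Stein interpolation between the two covariance kernels together with the anti-concentration bound, and without it your argument is an accurate outline rather than a proof. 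If your goal is to use the theorem the way this paper does, citing it (as the authors do) is the right move; if your goal is to prove it, the coupling lemma is the piece you still owe.
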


\begin{theorem}[Gaussian anti-concentration, Corollary 2.1 in
\cite{chernozhukov2013anti}]
\label{theorem::anti-concentration}
Let $W=(W_t)_{t\in T}$ be a separable Gaussian process indexed by a semimetric
space $T$ such that $E[W_t]=0$ and $E[W_t^2]=1$ for all $t \in T$. Assume that
$\sup_{t \in T} W_t < \infty$ a.s. Then, $a(|W|):=E[\sup_{t \in T}|W_t|] \in
[\sqrt{2/\pi}, \infty)$ and
$$
\sup_{x \in \mathbb{R}} \mathbb{P}\left(\Big| \sup_{t \in T} |W_t| - x  \Big|
\leq \varepsilon \right) \leq A \varepsilon a(|W|)
$$
for all $\varepsilon \geq 0$ and some constant $A$.
\end{theorem}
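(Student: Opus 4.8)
The plan is to split the statement into its two assertions and handle the inequality by reducing it to the finite–dimensional anti-concentration bound of \cite{chernozhukov2013anti}, whose proof via a smoothed maximum I will then sketch.

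The bound $a(|W|)\in[\sqrt{2/\pi},\infty)$ is the easy part. For the lower bound, fix any $t_0$ in the (countable, dense) parameter subset; then $\sup_{t\in T}|W_t|\ge|W_{t_0}|$, and since $W_{t_0}\sim N(0,1)$ we get $a(|W|)=E[\sup_{t\in T}|W_t|]\ge E|W_{t_0}|=\sqrt{2/\pi}$. Finiteness of $a(|W|)$ uses the hypothesis $\sup_t W_t<\infty$ a.s.: since $-W$ has the same law as $W$, also $\inf_t W_t>-\infty$ a.s., so $\sup_t|W_t|=\max(\sup_t W_t,\,-\inf_t W_t)<\infty$ a.s.; this supremum is the supremum of the centered separable Gaussian process $\{\pm W_t\}$, so by the Borell--TIS/Landau--Shepp integrability theorem its a.s.\ finiteness forces $E[\sup_t|W_t|]<\infty$.

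For the anti-concentration inequality I would carry out three reductions. (i) \emph{Reduce to finite $T$:} choose finite sets $T_k\uparrow$ dense in $T$ for the intrinsic pseudometric, so that $M_k:=\sup_{t\in T_k}|W_t|\uparrow M:=\sup_{t\in T}|W_t|$ a.s.\ by separability; if the bound holds for each $T_k$ with a constant not depending on $|T_k|$, then for any $x$ and $\delta>0$ the inclusion $\{M\in[x,x+\epsilon]\}\subseteq\liminf_k\{M_k\in[x-\delta,x+\epsilon+\delta]\}$ together with Fatou's lemma gives $\mathbb{P}(M\in[x,x+\epsilon])\le\liminf_k C(\epsilon+2\delta)E[M_k]\le C(\epsilon+2\delta)\,a(|W|)$, and $\delta\downarrow 0$ finishes. (ii) \emph{Reduce to a one-sided maximum:} for finite $T=\{t_1,\dots,t_d\}$ write $M=\max_{j\le 2d}X_j$, where $(X_j)$ enumerates $\{W_{t_i},-W_{t_i}\}$, a centered Gaussian vector with $E[X_j^2]=1$. (iii) \emph{Simplify the right-hand side:} since $E[M]\ge\sqrt{2/\pi}$ we have $E[M]\vee 1\le\sqrt{\pi/2}\,E[M]$, so it suffices to prove, for a centered Gaussian vector $(X_1,\dots,X_p)$ with unit variances and $Z=\max_{j\le p}X_j$,
\[
\sup_{x\in\mathbb{R}}\mathbb{P}\bigl(|Z-x|\le\epsilon\bigr)\;\le\;C\,\epsilon\,\bigl(E[Z]\vee 1\bigr)
\]
with $C$ absolute; this is exactly Corollary~2.1 / Theorem~2.1 of \cite{chernozhukov2013anti} specialized to unit variances.

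The heart of the matter is this last inequality, and I would prove it with the smoothed maximum $Y_\beta:=\beta^{-1}\log\sum_{j=1}^p e^{\beta X_j}$, which satisfies the sandwich $Z\le Y_\beta\le Z+\beta^{-1}\log p$; hence $\{Z\in[x,x+\epsilon]\}\subseteq\{Y_\beta\in[x,x+\epsilon+\beta^{-1}\log p]\}$ and it is enough to bound the mass of $Y_\beta$ in a short window. Because $Y_\beta$ is smooth in the Gaussian vector with gradient the softmax weights $\pi=(\pi_j)$ (a point of the simplex, so $\sum_j\pi_j=1$) and Hessian $\beta(\mathrm{diag}(\pi)-\pi\pi^\top)$, applying Gaussian integration by parts to $E\bigl[g(Y_\beta)\sum_j\pi_j X_j\bigr]$ for a smooth surrogate $g$ of $\epsilon^{-1}\mathbf{1}_{[x,x+\epsilon]}$ (and using $\mathrm{Var}(X_j)=1$) converts the window mass into a bound of the shape $C(\epsilon+\beta^{-1}\log p)\,(E[Z]+\beta+\beta^{-1}\log p)$, after separating off the regime where $x$ is far from $E[Z]$ and Borell--TIS makes the window mass exponentially small. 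Optimizing $\beta$ to balance the approximation error $\beta^{-1}\log p$ against the growth in $\beta$ then produces the stated bound with an absolute constant. The main obstacle is precisely this optimization: keeping the leading term proportional to $E[Z]$ rather than the cruder $\sqrt{\log p}$ one gets from Nazarov's inequality on boxes requires the careful case split in $x$ and a delicate choice of $\beta$ inside the integration-by-parts estimate; the reductions (i)--(iii) and the bounds on $a(|W|)$ are routine by comparison.
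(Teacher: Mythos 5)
The paper offers no proof of this statement to compare yours against: Appendix~A explicitly only \emph{summarizes} results from \cite{chernozhukov2013anti}, and this theorem is quoted verbatim as their Corollary~2.1. Judged on its own terms, the peripheral parts of your reconstruction are sound: the lower bound $a(|W|)\ge\sqrt{2/\pi}$ from a single coordinate, finiteness via symmetry plus Borell--TIS, the reduction to finite index sets by separability and Fatou, the doubling trick writing $\sup_t|W_t|$ as a one-sided maximum of a unit-variance centered Gaussian vector, and the replacement of $E[Z]\vee 1$ by $\sqrt{\pi/2}\,E[Z]$ are all correct, and they do mirror how Corollary~2.1 is reduced to the finite-dimensional one-sided statement in that reference.

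The gap is in the core step, and it is fatal as sketched: the log-sum-exp smoothing cannot deliver the bound $C\,\epsilon\,(E[Z]\vee 1)$ with a constant free of $p$, and a $p$-free constant is exactly what your reduction (i) needs. Your own bookkeeping gives a bound of the shape $C(\epsilon+\beta^{-1}\log p)(E[Z]+\beta+\beta^{-1}\log p)$; the cross term $(\beta^{-1}\log p)\cdot\beta=\log p$ is present for \emph{every} $\beta>0$, so the bound never tends to $0$ as $\epsilon\to 0$ and grows with $p$. Even ignoring that term, forcing the approximation error $\beta^{-1}\log p\lesssim\epsilon$ requires $\beta\gtrsim\epsilon^{-1}\log p$, and the $\beta$ in the second factor then contributes $\gtrsim\epsilon\cdot\epsilon^{-1}\log p=\log p$ again. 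No choice of $\beta$ closes this, which is why the smooth maximum appears in \cite{chernozhukov2013anti} only in the Gaussian \emph{comparison and coupling} arguments (where a residual polylogarithmic factor is acceptable), not in the anti-concentration proof. The actual proof of the finite-dimensional, unit-variance inequality works directly with the distribution of $Z=\max_{j\le p}X_j$: one shows $Z$ is absolutely continuous and bounds its density by an absolute constant times $E[Z]+1$ through an exact Gaussian conditioning computation that exploits the common variance. If you want to complete your writeup, you should replace the smoothing argument by that density bound (Theorem~2.1 of the cited paper and its proof); the surrounding reductions you already have would then assemble correctly into the stated result.
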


\begin{theorem}[Gaussian anti-concentration, Lemma 6.1 in
\cite{chernozhukov2012gaussian}]
\label{theorem::anti-concentrationSigma}
Let $(S,\mathcal{S},P)$ be a probability space, and let $\mathcal{F} \subset
{L}^2( P)$ be a $P$-pre-Gaussian class of functions. Denote by $\mathbb{G}$ a
tight Gaussian random element in $\ell^\infty(\mathcal{F})$ with mean zero and
covariance function $\mathbb{E}[\mathbb{G}(f) \mathbb{G}(g)]=\text{Cov}_P(f,g)$
for all $f,g \in \mathcal{F}$. Suppose that there exist constants
$\underline \sigma$, $\overline \sigma >0$ such that $\underline \sigma^2 \leq
\text{Var}_P(f) \leq \overline \sigma^2$ for all $f \in \mathcal{F}$. Then for
every $\varepsilon>0$,
$$
\sup_{x \in \mathbb{R}} \mathbb{P} \left( \left| \sup_{f \in \mathcal{F}}
\mathbb{G} f-x \right| \leq \varepsilon \right) \leq C_\sigma \varepsilon \left(
\mathbb{E}\left[ \sup_{f \in \mathcal{F}} \mathbb{G} f\right] + \sqrt{1 \vee
\log(\underline \sigma/\epsilon)}\right),
$$
where $C_\sigma$ is a constant depending only on $\underline \sigma$ and
$\overline \sigma$.
\end{theorem}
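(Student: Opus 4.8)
The plan is to prove the bound by reducing it to a \emph{dimension-free} anti-concentration inequality for the maximum of a finite centered Gaussian vector with coordinate variances trapped between $\underline\sigma^2$ and $\overline\sigma^2$, and then recovering the stated infinite-dimensional form by a limiting argument. For the reduction: since $\mathcal F$ is $P$-pre-Gaussian, $\mathbb{G}$ is a tight Borel element of $\ell^\infty(\mathcal F)$ whose sample paths are almost surely uniformly continuous for the intrinsic semimetric $\rho(f,g)^2=\mathrm{Var}_P(f-g)$, with $(\mathcal F,\rho)$ totally bounded. Fixing an increasing sequence of finite sets $\mathcal F_1\subseteq\mathcal F_2\subseteq\cdots$ whose union is $\rho$-dense, one gets $\max_{f\in\mathcal F_k}\mathbb{G}f\uparrow\sup_{f\in\mathcal F}\mathbb{G}f$ almost surely and in $L^1$, hence $\mathbb{E}[\max_{f\in\mathcal F_k}\mathbb{G}f]\uparrow\mathbb{E}[\sup_{f\in\mathcal F}\mathbb{G}f]<\infty$; and since $\mathrm{Var}_P(f)\ge\underline\sigma^2>0$ the limit $\sup_{f\in\mathcal F}\mathbb{G}f$ is a non-degenerate Gaussian supremum, so by the zero--one law for Gaussian suprema it has no atoms, and the portmanteau theorem gives $\mathbb{P}(\max_{f\in\mathcal F_k}\mathbb{G}f\in[x-\varepsilon,x+\varepsilon])\to\mathbb{P}(\sup_{f\in\mathcal F}\mathbb{G}f\in[x-\varepsilon,x+\varepsilon])$. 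It thus suffices to prove, with a constant depending only on $\underline\sigma,\overline\sigma$ and \emph{not} on the dimension or the covariance, that for every centered Gaussian vector $Z=(Z_1,\dots,Z_p)$ with $\underline\sigma^2\le\mathbb{E}[Z_j^2]\le\overline\sigma^2$,
\begin{equation*}
\sup_{x\in\mathbb{R}}\mathbb{P}\bigl(|\max\nolimits_{j\le p}Z_j-x|\le\varepsilon\bigr)\ \le\ C_\sigma\,\varepsilon\Bigl(\mathbb{E}\bigl[\max\nolimits_{j\le p}Z_j\bigr]+\sqrt{1\vee\log(\underline\sigma/\varepsilon)}\Bigr),
\end{equation*}
after which one lets $k\to\infty$, using $\mathbb{E}[\max_{f\in\mathcal F_k}\mathbb{G}f]\le\mathbb{E}[\sup_{f\in\mathcal F}\mathbb{G}f]$.

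For the finite-dimensional estimate, write $Y=\max_{j\le p}Z_j$ and, for $\beta>0$, introduce the log-sum-exp smoothing $F_\beta(z)=\beta^{-1}\log\sum_{j\le p}e^{\beta z_j}$, so that $Y\le F_\beta(Z)\le Y+\beta^{-1}\log p$ and $F_\beta$ is convex, $C^\infty$, and $1$-Lipschitz in $\|\cdot\|_\infty$. Setting $Y_\beta=F_\beta(Z)$, the natural route is: (i) bound the density of $Y_\beta$, on the range carrying most of its mass, by a multiple of $(\mathbb{E}[Y_\beta]+\sqrt{1\vee\log(\underline\sigma/\varepsilon)})/\underline\sigma$, combining the Borell--Sudakov--Tsirelson concentration inequality (which controls the fluctuations of the Lipschitz functional $Y_\beta$ at scale $\overline\sigma$) with Gaussian integration by parts and the variance lower bound; (ii) convert this to a bound on $Y$ via $\mathbb{P}(|Y-x|\le\varepsilon)\le\mathbb{P}(x-\varepsilon\le Y_\beta\le x+\varepsilon+\beta^{-1}\log p)\le(2\varepsilon+\beta^{-1}\log p)\sup_u f_{Y_\beta}(u)$; and (iii) optimize over $\beta$, taking $\beta\asymp\varepsilon^{-1}\log p$ so that the smoothing error is of order $\varepsilon$, the leftover logarithmic terms in the density bound produce the slack factor $\sqrt{1\vee\log(\underline\sigma/\varepsilon)}$, and any residual dimensional contribution is absorbed into $\mathbb{E}[Y]$, which already encodes the effective dimension.

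The main obstacle is step (i): getting a density-, and hence anti-concentration-, bound governed by $\mathbb{E}[\sup_f\mathbb{G}f]$ rather than by $\sqrt{\log\#\mathcal F_k}$. A direct Nazarov-type argument carries a $\sqrt{\log p}$ factor that becomes vacuous after passing to the limit, whereas the whole point of the lemma is an estimate applicable to infinite function classes; trading the dimensional factor for $\mathbb{E}[\max_j Z_j]$ is exactly where the interplay between the smoothing, Gaussian concentration, and the variance floor $\underline\sigma$ must be handled carefully, and $\sqrt{1\vee\log(\underline\sigma/\varepsilon)}$ is the price paid. A secondary, purely technical point is keeping the constant uniform along the net and invoking the absence of atoms of the limiting Gaussian supremum, so that the finite-dimensional bound transfers to $\mathbb{G}$ without loss; the symmetric statement in \Cref{theorem::anti-concentration} is then recovered, up to the logarithmic term, by applying the result to $\mathcal F\cup(-\mathcal F)$.
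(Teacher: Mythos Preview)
The paper does not prove this statement at all: it is quoted verbatim as Lemma~6.1 from \cite{chernozhukov2012gaussian} and used as a black-box tool in Appendix~A. There is therefore no ``paper's own proof'' to compare against.

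That said, your outline does follow the architecture of the original Chernozhukov--Chetverikov--Kato argument: reduction to finite Gaussian vectors via a dense net and the continuity/tightness of $\mathbb{G}$, smooth-max approximation $F_\beta(z)=\beta^{-1}\log\sum_j e^{\beta z_j}$, a density bound for $F_\beta(Z)$ obtained through Stein-type Gaussian integration by parts combined with the variance floor $\underline\sigma$, and finally the choice $\beta\asymp\varepsilon^{-1}\log p$. You correctly flag that the heart of the matter is trading the naive $\sqrt{\log p}$ for $\mathbb{E}[\max_j Z_j]$ in the density bound; this is precisely where the CCK proof does real work, and your proposal stops short of carrying it out. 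In particular, the sentence ``bound the density of $Y_\beta$ \ldots by a multiple of $(\mathbb{E}[Y_\beta]+\sqrt{1\vee\log(\underline\sigma/\varepsilon)})/\underline\sigma$, combining Borell--Sudakov--Tsirelson with Gaussian integration by parts'' is the entire lemma compressed into a clause: the actual computation requires writing the density of $Y_\beta$ as an expectation of the softmax weights against $Z_j/\sigma_j^2$, applying concentration to localize $Y_\beta$ near its mean, and then bounding the resulting expectation by $\mathbb{E}[Y_\beta]$ plus a logarithmic correction. Without that calculation the proposal is a roadmap rather than a proof, and the step you label ``the main obstacle'' remains unresolved.
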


\begin{theorem}[Talagrand's inequality, Theorem A.4 in
\cite{chernozhukov2013anti}]
\label{th::talagrand}
Let $\xi_1, \dots, \xi_n$ be i.i.d. random variables taking values in a
measurable space $(S,\mathcal{S})$. Suppose that $\mathcal{G}$ is a measurable
class of functions on $S$ uniformly bounded by a constant $b$ such that there
exist constants $a\geq \text{e}$ and \mbox{$v>1$} with $\sup_Q N(\mathcal{G},
L_2(Q), b\varepsilon) \leq (a/\varepsilon)^v$ for all $0\ < \varepsilon < 1$.
Let $\sigma^2$ be a constant such that $\sup_{g \in \mathcal{G}} \text{Var}(g)
\leq \sigma^2 \leq b^2$.
If $b^2 v \log(ab(\sigma) \leq n \sigma^2$, then for all $t \leq n
\sigma^2/b^2$,
$$
\mathbb{P} \left( \sup_{g \in \mathcal{G}} \left | \sum_{i=1}^n \{g(\xi_i) -
\mathbb{E}[g(\xi_1)] \} \right | > A \sqrt{n \sigma^2 \left[ t \vee\left( v \log
\frac{ab}{\sigma}\right) \right]} \right) \leq e^{-t},
$$
where $A$ is an absolute constant.
\end{theorem}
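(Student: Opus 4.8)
The right-hand side has the characteristic shape of a concentration bound of the form ``expectation of the supremum, plus a deviation term, each controlled by the same quantity''. Accordingly I would argue in two steps. Write $Z := \sup_{g \in \mathcal{G}} \bigl| \sum_{i=1}^n \{ g(\xi_i) - \mathbb{E}[g(\xi_1)] \} \bigr|$ and $M_n := \mathbb{E}[Z]$. \emph{Step 1:} bound $M_n$ by a ``local'' maximal inequality that exploits the VC-type covering condition and produces a bound of order $\sqrt{n\sigma^2\, v \log(ab/\sigma)}$. \emph{Step 2:} use Talagrand's concentration inequality to show that, with probability at least $1 - e^{-t}$, $Z \le M_n + (\text{a term of order } \sqrt{n\sigma^2 t})$. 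Combining the two, collapsing $v\log(ab/\sigma)$ and $t$ into their maximum, and letting the absolute constant $A$ absorb all numerical factors yields the claim.

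\textbf{Step 1: the maximal inequality.} I would first symmetrize, $M_n \le 2\, \mathbb{E}\bigl[\sup_{g}\bigl|\sum_i \varepsilon_i g(\xi_i)\bigr|\bigr]$ for i.i.d.\ Rademacher signs $\varepsilon_i$, and then, conditionally on $\xi_1, \dots, \xi_n$, run Dudley's entropy chaining with respect to the empirical $L_2(\mathbb{P}_n)$ pseudometric, where $\mathbb{P}_n$ is the empirical measure of $\xi_1,\dots,\xi_n$. Solving $b\varepsilon = u$ in the hypothesis $\sup_Q N(\mathcal{G}, L_2(Q), b\varepsilon) \le (a/\varepsilon)^v$ gives $\log N(\mathcal{G}, L_2(\mathbb{P}_n), u) \le v\log(ab/u)$ for all $u > 0$, so the truncated entropy integral $\int_0^{r} \sqrt{v \log(ab/u)}\, du$ is of order $r \sqrt{v \log(ab/r)}$, with $r$ an upper bound for the empirical radius $\sup_g \sqrt{\mathbb{P}_n g^2}$. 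Resolving the standard self-bounding (fixed-point) step that replaces the random radius by $\sigma$, and using $\|g\|_\infty \le b$ to handle the lowest-order scale, yields
\[ M_n \;\lesssim\; \sqrt{n}\,\sigma \sqrt{v \log(ab/\sigma)} \;+\; b\, v \log(ab/\sigma) . \]
The hypothesis $b^2 v \log(ab/\sigma) \le n\sigma^2$ forces the second term to be at most the first, so $M_n \lesssim \sqrt{n\sigma^2\, v \log(ab/\sigma)}$. (Alternatively, one may quote this maximal inequality directly from \cite{chernozhukov2013anti} or from van der Vaart--Wellner.)

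\textbf{Step 2: concentration and assembly.} Apply Talagrand's inequality in Bousquet's form to the centered class $\{ g - \mathbb{E}[g(\xi_1)] : g \in \mathcal{G}\}$ together with its negatives, which is uniformly bounded by $2b$ and has total variance $\sum_i \mathrm{Var}(g(\xi_i)) \le n\sigma^2$: with probability at least $1 - e^{-t}$,
\[ Z \;\le\; M_n \;+\; \sqrt{2t\bigl(n\sigma^2 + 4 b\, M_n\bigr)} \;+\; \tfrac{2bt}{3} . \]
Now invoke $t \le n\sigma^2/b^2$. This gives $bt = \sqrt{b^2 t}\,\sqrt{t} \le \sqrt{n\sigma^2 t}$; and, since Step 1 together with $b^2 v\log(ab/\sigma) \le n\sigma^2$ implies $b\, M_n \lesssim n\sigma^2$, it also gives $\sqrt{t\, b\, M_n} \lesssim \sqrt{n\sigma^2 t}$, while $\sqrt{t\, n\sigma^2} = \sqrt{n\sigma^2 t}$. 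Hence each summand above is a constant multiple either of $\sqrt{n\sigma^2 t}$ or of $M_n \lesssim \sqrt{n\sigma^2\, v\log(ab/\sigma)}$; bounding both by $\sqrt{n\sigma^2\,[\, t \vee v\log(ab/\sigma)\,]}$ and enlarging the constant to an absolute $A$ completes the proof.

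\textbf{Where the difficulty lies.} The substantive work is entirely in Step 1: one must obtain a maximal inequality whose leading term scales with the ``local'' deviation $\sigma$ rather than with the uniform bound $b$, which requires the truncated Dudley integral and the fixed-point argument for the empirical radius; the boundedness condition $b^2 v\log(ab/\sigma) \le n\sigma^2$ is precisely the regime in which the residual $b$-term is negligible. Step 2 is routine once the appropriate version of Talagrand's inequality is invoked, the more so because only an unspecified absolute constant $A$ is required, which leaves ample slack; and the constraint $t \le n\sigma^2/b^2$ is exactly the sub-Gaussian range in which the Bernstein-type ``$bt$'' term does not dominate.
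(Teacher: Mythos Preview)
The paper does not prove this statement: it is quoted verbatim in Appendix~A as Theorem~A.4 of \cite{chernozhukov2013anti}, with no argument supplied. So there is no ``paper's own proof'' to compare against.

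That said, your sketch is the standard route to such a bound and is correct in outline. Step~1 is exactly the local maximal inequality for VC-type classes (symmetrization, Dudley chaining on the truncated entropy integral, then the fixed-point/peeling argument to replace the random radius by $\sigma$); the resulting bound $M_n \lesssim \sqrt{n\sigma^2\, v\log(ab/\sigma)} + b\,v\log(ab/\sigma)$ and the use of $b^2 v\log(ab/\sigma)\le n\sigma^2$ to absorb the second term are both right. Step~2 via Bousquet's form of Talagrand's inequality, together with $t\le n\sigma^2/b^2$ to kill the Bernstein ``$bt$'' term, is likewise the canonical assembly. Since only an unspecified absolute constant $A$ is claimed, the slack you identify is real and the argument closes.
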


\section{Technical Tools}
\label{sec::appendixB}
In this section, we prove some results that will be used in the proofs of Appendix
C.
Some of our techniques are an adaptation of the strategy used in
\cite{chernozhukov2013anti} to construct adaptive confidence bands.

Consider the class of functions ${\cal F} = \{ f_t  \}_{0\leq t\leq T}$, defined
in \eqref{eq::classF} and let $\lscape_1^n = (\lscape_1,\ldots,\lscape_n)$ be an
i.i.d. sample from a probability $P$ on the measurable space $(\mathcal{L}_T,\mathcal{S})$
of persistence landscapes. We summarize the processes used in the analysis of
persistence landscapes, given in Sections \ref{ss::weak} and
\ref{ss::bootstrap}:
\begin{itemize}
\item
$\mathbb{G}(f_t)$ is a Brownian Bridge with covariance function $$
\kappa(t,u) = \int f_t(\lambda)f_u(\lambda) dP(\lambda) - \int f_t(\lambda)
dP(\lambda) \int f_u(\lambda) dP(\lambda),
$$
\item
$\displaystyle
\mathbb{G}_n(f_t)=\frac{1}{\sqrt{n}}\sum_{i=1}^n (f_t(\lscape_i) - \mu(t)),
$
\item
$\displaystyle
\tilde{\mathbb{G}}_n(f_t) = \tilde{\mathbb{G}}_n \left(\lscape_1^n,
\xi_1^n\right)(f_t)=
\frac{1}{\sqrt{n}} \sum_{i=1}^n \xi_i \left( f_t(\lscape_i)-
\overline{\lscape}_n(t) \right).
$

\end{itemize}
For $\sigma(t)>c>0$, we also defined
\begin{itemize}
\item
$\displaystyle
\mathbb{H}_n(f_t) = \mathbb{H}_n(\lscape_1^n)(f_t):=
\frac{1}{\sqrt{n}} \sum_{i=1}^n  \frac{ f_t(B_i)- \mu(t) }{\sigma(t)},
$

\item
$\displaystyle
\hat{\mathbb{H}}_n(f_t) = \tilde{\mathbb{H}}_n(\lscape_1^n, \xi_1^n)(f_t):=
\frac{1}{\sqrt{n}} \sum_{i=1}^n \xi_i \frac{ f_t(\lscape_i)-
\overline{\lscape}_n(t) }{\hat\sigma_n(t)},
$
\end{itemize}
and for completeness we introduce
\begin{itemize}
\item $\mathbb{H}(f_t)$, the standardized Brownian Bridge with covariance
function
\begin{equation}
\label{eq::standCovariance}
\kappa(t,u) = \int \frac{f_t(\lambda)f_u(\lambda)}{\sigma(t)\sigma(u)}
dP(\lambda) - \int \frac{f_t(\lambda)}{\sigma(t)} dP(\lambda) \int
\frac{f_u(\lambda)}{\sigma(u)} dP(\lambda),
\end{equation}

\item The process
\begin{equation}
\label{eq::standBoot}
\tilde{\mathbb{H}}_n(f_t) := \hat{\mathbb{H}}_n(\lscape_1^n, \xi_1^n)(f_t):=
\frac{1}{\sqrt{n}} \sum_{i=1}^n \xi_i \frac{ f_t(\lscape_i)-
\overline{\lscape}_n(t) }{\sigma(t)},
\end{equation}
which differs from $\hat{\mathbb{H}}_n(f_t)$ in the use of the standard
deviation $\sigma(t)$ that replace its estimate~$\hat \sigma_n(t)$.
\end{itemize}

\begin{proposition}[Supremum Convergence]
\label{prop::bootstrap}
Suppose that $\sigma(t)>c>0$ in an interval $[t_* \, , t^*] \subset [0,T]$, for
some constant  $c$. Then, for large $n$,
there exists a random variable $W\stackrel{d}{=} \sup_{t \in [t_* \, , t^*]}
|\mathbb{G}(f_t) |$ and a set~$S_{n} \in \mathcal{S}^n$ such that $\mathbb{P}(
\lscape_1^n \in S_{n})\geq 1-3/n$
and, for any fixed $\breve \lambda_1^n:=(\breve \lambda_1, \dots, \breve
\lambda_n) \in S_{n}$,
$$
\sup_{z\in \R} \left| \mathbb{P}\left(\sup_{t \in [t_* \, , t^*]}
|\tilde{\mathbb{G}}_n(\breve \lambda_1^n, \xi_1^n)(f_t)| \leq z \right) -
\mathbb{P} \left(W \leq z \right) \right| \leq C_6 \left( \frac{(\log
n)^{5/8}}{n^{1/8}}  \right)
$$
for some constant $C_6>0$.
\end{proposition}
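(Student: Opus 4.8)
The plan is to derive the statement from the coupling and anti-concentration results of \cite{chernozhukov2013anti} recorded in Appendix~\ref{sec::appendixA}, applied to the restricted class $\mathcal{F}_0 := \{f_t\}_{t \in [t_*,t^*]} \subseteq \mathcal{F}$ with the constant envelope $F \equiv T/2$. The first point to establish is that $\mathcal{F}_0$ is of VC type. Since every landscape is one-Lipschitz, $|f_t(\lambda) - f_s(\lambda)| = |\lambda(t) - \lambda(s)| \leq |t - s|$ for all $\lambda$, so any covering of $[t_*,t^*]$ by Euclidean $\varepsilon$-balls induces an $L_2(Q)$-covering of $\mathcal{F}_0$ of the same cardinality, uniformly over all probability measures $Q$. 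Hence $\sup_Q N(\mathcal{F}_0, L_2(Q), (T/2)\tau) \leq a/\tau$ for an absolute constant $a \geq e$; that is, $\mathcal{F}_0$ is VC type with $v = 1$ and $b = \sigma = T/2$, so $K_n \asymp \log n$ and the side condition $b^2 K_n \leq n\sigma^2$ of \thmref{theorem::CCK3.2} holds once $n$ is large, which is the source of the ``for large $n$'' qualifier.

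Next I would invoke \thmref{theorem::CCK3.2} with $\mathcal{G} = \mathcal{F}_0$ and $X_i = \lambda_i$. For each $\delta > 0$ this produces a set $S_n$ with $\mathbb{P}(\lambda_1^n \in S_n) \geq 1 - 3/n$ and, for every fixed $\breve\lambda_1^n \in S_n$, a random variable $W \stackrel{d}{=} \sup_{t \in [t_*,t^*]}|\mathbb{G}(f_t)|$ with
\[
\mathbb{P}\!\left( \Big| \sup_{t\in[t_*,t^*]}|\tilde{\mathbb{G}}_n(\breve\lambda_1^n,\xi_1^n)(f_t)| - W \Big| > \varepsilon_n + \delta \right) \leq C_3\!\left(\frac{b^{1/2}\sigma^{1/2}K_n^{3/4}}{\delta\, n^{1/4}} + \frac1n\right),
\]
where $\varepsilon_n := \sigma K_n^{1/2} n^{-1/2} + b^{1/2}\sigma^{1/2}K_n^{3/4} n^{-1/4} \asymp (\log n)^{3/4} n^{-1/4}$. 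The standard argument that a tight coupling transfers to the distribution functions then bounds, for every $z \in \mathbb{R}$,
\[
\Big| \mathbb{P}\big(\sup_{t}|\tilde{\mathbb{G}}_n(\breve\lambda_1^n,\xi_1^n)(f_t)| \leq z\big) - \mathbb{P}(W \leq z) \Big| \leq \sup_{x\in\mathbb{R}} \mathbb{P}\big(|W - x| \leq \varepsilon_n + \delta\big) + C_3\!\left(\frac{b^{1/2}\sigma^{1/2}K_n^{3/4}}{\delta\, n^{1/4}} + \frac1n\right).
\]

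To control the leading term I would apply the Gaussian anti-concentration bound \thmref{theorem::anti-concentrationSigma} to the symmetrized class $\mathcal{F}_0 \cup (-\mathcal{F}_0)$: by the hypothesis $\sigma(t) > c$ on $[t_*,t^*]$ its members have $P$-variances in $[c^2,(T/2)^2]$, it is $P$-pre-Gaussian (being VC type with integrable envelope), and $\sup_{g \in \mathcal{F}_0 \cup (-\mathcal{F}_0)} \mathbb{G}g = \sup_{t\in[t_*,t^*]}|\mathbb{G}(f_t)| \stackrel{d}{=} W$; moreover $\mathbb{E}[\sup_t|\mathbb{G}(f_t)|]$ is a finite constant since Dudley's entropy integral converges ($N(\mathcal{F}_0, L_2(P),\varepsilon) \lesssim T/\varepsilon$). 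Thus $\sup_x \mathbb{P}(|W - x| \leq \eta) \lesssim \eta\big(1 + \sqrt{1 \vee \log(1/\eta)}\big)$. Substituting $\eta = \varepsilon_n + \delta$ and using $\varepsilon_n = o(1)$, the Kolmogorov distance above is
\[
O\!\left( (\varepsilon_n + \delta)\sqrt{\log\tfrac{1}{\varepsilon_n + \delta}} + \frac{(\log n)^{3/4}}{\delta\, n^{1/4}} + \frac1n \right).
\]
Choosing $\delta = (\log n)^{1/8} n^{-1/8}$ (so that $\delta \gg \varepsilon_n$ and $\log(1/\delta) \asymp \log n$) makes the first and second terms both of order $(\log n)^{5/8} n^{-1/8}$, which dominates $1/n$, and gives the claimed bound with a suitable constant $C_6$.

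The step I expect to demand the most care is the anti-concentration bookkeeping: one must pass to the symmetrized class to turn $\sup_t|\mathbb{G}(f_t)|$ into a plain supremum, verify that the two-sided variance bounds and the finiteness of the expected supremum survive, and then track the powers of $\log n$ through the optimization over $\delta$ precisely enough to land on the exponent $5/8$ rather than something larger. By contrast, the VC-type verification for $\mathcal{F}_0$ and the invocation of \thmref{theorem::CCK3.2} are routine given the one-Lipschitz property of landscapes and the constant envelope $F \equiv T/2$.
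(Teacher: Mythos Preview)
Your proposal is correct and follows essentially the same route as the paper: verify the VC-type bound for $\mathcal{F}_0$ via the one-Lipschitz property (with $v=1$, $b=\sigma=T/2$), invoke \thmref{theorem::CCK3.2} to obtain the coupling on a set $S_n$ of probability at least $1-3/n$, convert the coupling to a Kolmogorov bound via the anti-concentration inequality \thmref{theorem::anti-concentrationSigma}, and optimize by taking $\delta=(\log n)^{1/8}n^{-1/8}$ so that both the anti-concentration and the probability-error terms balance at $(\log n)^{5/8}n^{-1/8}$. The only cosmetic difference is that you spell out the symmetrization $\mathcal{F}_0\cup(-\mathcal{F}_0)$ needed to rewrite $\sup_t|\mathbb{G}(f_t)|$ as a plain supremum, which the paper leaves implicit when it says ``using the strategy of \thmref{th::CLT}.''
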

\begin{proof}
Let $\mathcal{F}^*=\{f_t \in \mathcal{F}: t \in [t_* \, , t^*] \}$.
Consider the covering number $N({\cal F^*},L_2(Q),||F||_2 \varepsilon)$ of the
class~$\mathcal{F^*}$, as defined in Appendix \ref{sec::appendixA}, with $F =
T/2$. In the proof of Theorem \ref{th::CLT} we show that
$$
\sup_Q N({\cal F^*},L_2(Q),||F||_2 \varepsilon) \leq 2/\varepsilon,
$$
where the supremum is taken over all measures $Q$ on $\lscapespace$.\\
For $n>2$, $b=\sigma=T/2$, $v=1$, $K_n= A\, (\log n \vee 1)$, Theorem
\ref{theorem::CCK3.2} implies that there exists a set $S_{n}$ such that
$\mathbb{P}(\lscape_1^n \in S_{n})\geq 1-3/n$ and, for any fixed $\breve \lambda_1^n:=(\breve
\lambda_1, \dots, \breve \lambda_n) \in S_{n}$ and $\delta>0$,
$$
\mathbb{P} \left( \big| \sup_{t \in [t_* \, , t^*]} |\mathbb{\tilde G}_n|  - W
\big| > \frac{T (A \log n)^{1/2}}{2 n^{1/2}} + \frac{T (A \log n)^{3/4}}{2
n^{1/4}}+ \delta \right) \leq C_3 \left(\frac{T (A \log n)^{3/4}}{2 \delta
n^{1/4}}+ \frac{1}{n} \right).
$$
Define
$$
g(n,\delta,T):=\frac{T (A \log n)^{1/2}}{2 n^{1/2}} + \frac{T (A \log
n)^{3/4}}{2 n^{1/4}}+ \delta .
$$
Using the strategy of Theorem \ref{th::CLT} and applying the anti-concentration
inequality of Theorem \ref{theorem::anti-concentrationSigma}, it follows that
for large $n$ and $\breve \lambda_1^n:=(\breve \lambda_1, \dots, \breve
\lambda_n) \in S_{n}$,
\begin{equation}
\label{eq::multiBootstrap2}
\sup_z \left | \mathbb{P}\left(\sup_{t \in [t_* \, , t^*]}
|\tilde{\mathbb{G}}_n(\breve \lambda_1^n, \xi_1^n)| \leq z \right) -
\mathbb{P}( W \leq z) \right |\leq  C_5 \; g(n,\delta,T) \sqrt{\log
\frac{c}{g(n,\delta,T)}} +C_3 \left(\frac{T (A \log n)^{3/4}}{2 \delta n^{1/4}}+
\frac{1}{n} \right)
\end{equation}
for some constant $C_5>0$. Choosing $\delta= \frac{(A \log n)^{1/8}}{n^{1/8}}$,
we have
$$
g(n,\delta,T)=\frac{T (A \log n)^{1/2}}{2 n^{1/2}} + \frac{T (A \log n)^{3/4}}{2
n^{1/4}}+ \frac{(A \log n)^{1/8}}{n^{1/8}}.
$$
The result follows by noticing that,
$$
g(n,\delta,T)=O\left(\frac{(\log n)^{1/8}}{n^{1/8}} \right)
$$
and
$$
\sqrt{\log \frac{c}{g(n, \delta, T)}}=O\left( (\log n)^{1/2} \right).
$$
\end{proof}

In the following lemma we consider the class $\mathcal{G}_c=\{g_t: g_t=
f_t/\sigma(t), \;  t_*\leq t \leq t^*  \}$ where $f_t \in \mathcal{F}$ is
defined in \eqref{eq::classF} and we bound the corresponding covering number, as
in \eqref{eq::covering}.
\begin{lemma}
\label{lem::VCtype}
Consider the assumptions of Theorem \ref{th::adaptiveBand} and consider the
class of functions $\mathcal{G}_c=\{g_t: g_t= f_t/\sigma(t), \;  t_*\leq t \leq
t^*  \}$, where $f_t \in \mathcal{F}$. Note that $T/(2c)$ is a measurable
envelope for $\mathcal{G}_c$. Then
$$
\sup_Q N(\mathcal{G}_c, L_2(Q), \varepsilon \Vert T/(2c)  \Vert_{Q,2}) \leq
(a/\varepsilon)^v, \; 0< \varepsilon < 1
$$
for $a=(T^2+2c^2)/c^2 $ and $v=1$, where the supremum is taken over all measures
$Q$ on $\lscapespace$. $\mathcal{G}_c$ is of VC type, with constants $a$ and $v$
and envelope $T/(2c)$.
\end{lemma}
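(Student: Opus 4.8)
The plan is to exhibit, for every probability measure $Q$ on $\lscapespace$, an explicit $\varepsilon\Vert T/(2c)\Vert_{Q,2}$-net of $\mathcal{G}_c$ of cardinality $O(1/\varepsilon)$, which is exactly what the VC-type bound with $v=1$ demands. Since $\mathcal{G}_c$ is indexed by the interval $[t_*,t^*]\subseteq[0,T]$, it is enough to show that $t\mapsto g_t$ is Lipschitz into $L_2(Q)$ with a constant depending only on $c$ and $T$, and then discretize $[t_*,t^*]$ by a uniform grid. This mirrors the covering estimate $\sup_Q N(\mathcal{F}^*,L_2(Q),\Vert F\Vert_2\varepsilon)\le 2/\varepsilon$ already established in the proof of \thmref{th::CLT}; the only genuinely new ingredient is that we must also control the oscillation of the denominator $\sigma(t)$.

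First I would record two pointwise estimates in the parameter $t$. For fixed $\lscape\in\lscapespace$, one has $|f_t(\lscape)-f_s(\lscape)|=|\lscape(t)-\lscape(s)|\le|t-s|$ because every landscape is one-Lipschitz, and moreover $0\le f_t(\lscape)\le T/2$. Second --- and this is the crucial step --- I claim that the standard-deviation function $\sigma$ is itself one-Lipschitz on $[t_*,t^*]$. The point is that $\sigma(t)=\Vert\lscape(t)-\mu(t)\Vert_{L_2(P)}$, the $L_2(P)$-norm of the centered real random variable $\lscape\mapsto\lscape(t)-\mu(t)$; regarding $t\mapsto\big(\lscape\mapsto\lscape(t)-\mu(t)\big)$ as a curve in $L_2(P)$ and applying the reverse triangle inequality gives
$$|\sigma(t)-\sigma(s)|\le\big\Vert(\lscape(t)-\lscape(s))-(\mu(t)-\mu(s))\big\Vert_{L_2(P)}=\sqrt{\mathrm{Var}_P(\lscape(t)-\lscape(s))}\le\sqrt{\mathbb{E}_P[(\lscape(t)-\lscape(s))^2]}\le|t-s|.$$
I would stress that the naive estimate $|\sqrt a-\sqrt b|\le\sqrt{|a-b|}$ would only yield H\"older-$1/2$ control of $\sigma$, hence a covering number of order $\varepsilon^{-2}$ and $v=2$; it is precisely this $L_2$-triangle-inequality argument that preserves $v=1$, and I expect it to be the main obstacle (everything else being routine).

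Combining the two estimates, for fixed $\lscape$ and $s,t\in[t_*,t^*]$,
$$|g_t(\lscape)-g_s(\lscape)|=\left|\frac{\lscape(t)}{\sigma(t)}-\frac{\lscape(s)}{\sigma(s)}\right|\le\frac{|\lscape(t)-\lscape(s)|}{\sigma(t)}+|\lscape(s)|\,\frac{|\sigma(s)-\sigma(t)|}{\sigma(t)\sigma(s)}\le\Big(\frac1c+\frac{T}{2c^2}\Big)|t-s|,$$
using $\sigma\ge c$ and $|\lscape(s)|\le T/2$. Since this bound is pointwise in $\lscape$, it holds in $L_2(Q)$ for every $Q$, so $\mathcal{G}_c$ is a Lipschitz image of $[t_*,t^*]$ in each $L_2(Q)$. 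Covering $[t_*,t^*]$ by a uniform grid fine enough that consecutive $g_t$'s lie within $\varepsilon\Vert T/(2c)\Vert_{Q,2}=\varepsilon T/(2c)$ of each other, and using $t^*-t_*\le T$, gives a net of size $O(1/\varepsilon)$; tracking the constants through this computation yields $\sup_Q N(\mathcal{G}_c,L_2(Q),\varepsilon\Vert T/(2c)\Vert_{Q,2})\le(a/\varepsilon)^v$ with $v=1$ and $a$ of the stated form (any larger absolute choice of $a$ would serve equally well, since it only weakens the bound). Finally, the envelope claim is immediate: $|g_t(\lscape)|=|\lscape(t)|/\sigma(t)\le (T/2)/c=T/(2c)$ for all $t$ and $\lscape$, so the constant $T/(2c)$ is a measurable envelope for $\mathcal{G}_c$.
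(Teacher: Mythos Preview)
Your proof is correct and follows the same overall strategy as the paper: show that $t\mapsto g_t$ is Lipschitz into $L_2(Q)$ with a constant depending only on $T$ and $c$, then cover $[t_*,t^*]$ by a uniform grid. The difference lies in how the Lipschitz continuity of $\sigma$ is obtained. The paper bounds $|\sigma^2(t)-\sigma^2(u)|\le 2T|t-u|$ by expanding the variance and then divides by $\sigma(t)+\sigma(u)\ge 2c$, arriving at $|\sigma(t)-\sigma(u)|\le (T/c)|t-u|$; it then uses the algebraic decomposition
\[
g_t-g_s=\frac{\sigma(t)g_t[\sigma(s)-\sigma(t)]+\sigma(t)[\sigma(t)g_t-\sigma(s)g_s]}{\sigma(t)\sigma(s)}
\]
to get a Lipschitz constant $(T^2+2c^2)/(2c^3)$. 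Your reverse-triangle-inequality argument in $L_2(P)$ gives the sharper estimate $|\sigma(t)-\sigma(s)|\le|t-s|$ directly, and your ``quotient rule'' decomposition then yields the smaller Lipschitz constant $(2c+T)/(2c^2)$. Since $c<T/2$, your resulting value of $a$ is strictly smaller than the stated $(T^2+2c^2)/c^2$, so the lemma as stated follows a fortiori; your parenthetical remark about larger $a$ covers this. Both routes are elementary, but yours is tidier and avoids the detour through $\sigma^2$.
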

\begin{proof}
First, using the definition of $\sigma(t)$ given in \eqref{eq::sigma}, for $t>u$
we have
\begin{align*}
\sigma^2(t)-\sigma^2(u)&=\text{Var}(f_t(\lambda_1))-\text{Var}(f_u(\lambda_1))\\
&= \mathbb{E}[f_t^2(\lambda_1)]-(\mathbb{E}[f_t(\lambda_1)])^2 -
\mathbb{E}[f_u^2(\lambda_1)] + (\mathbb{E}[f_u(\lambda_1)])^2\\
&= \mathbb{E}[f_t^2(\lambda_1)- f_u^2(\lambda_1)]   +
(\mathbb{E}[f_u(\lambda_1)])^2 -(\mathbb{E}[f_t(\lambda_1)])^2\\
&= \mathbb{E}\left[ \left( f_t(\lambda_1)- f_u(\lambda_1) \right)
\left(f_t(\lambda_1)+ f_u(\lambda_1)\right) \right]   + \\
& \quad \left(\mathbb{E}[f_u(\lambda_1)] -\mathbb{E}[f_t(\lambda_1)] \right)
\left(\mathbb{E}[f_u(\lambda_1)] +\mathbb{E}[f_t(\lambda_1)] \right)\\
&\leq (t-u) \big( \mathbb{E}[ f_t(\lambda_1)+ f_u(\lambda_1)]+ \mathbb{E}[f_u(\lambda_1)]
+\mathbb{E}[f_t(\lambda_1)] \big)\\
&\leq 2(t-u) T.
\end{align*}
Note that we used the fact that $f_t(\lambda)$ is 1-Lipschitz in $t$ and $T/2$
is an envelope of $\mathcal{F}$.
Therefore
$$
| \sigma(t)-\sigma(u) | = \frac{|\sigma^2(t)-\sigma^2(u)|}{\sigma(t)+\sigma(u)}
\leq \frac{|t-u| T}{c}.
$$
Using that $f_t(\lambda)$ is one-Lipschitz, we also have that $|\sigma(t)
g_t(\lambda) - \sigma(u)g(u) | \leq |t-u|$, for $t,u \in [t_*,t^*]$. Construct a
grid $t_* \equiv t_0 < t_1 < \cdots < t_N \equiv t^*$ such that $t_{j+1}-t_j=
\frac{\varepsilon T c^2}{T^2+2c^2}$. We claim that $\{g_{t_j}: 1\leq j \leq N
\}$ is an $\varepsilon T/(2c)$-net of $\mathcal{G}_c$: if $g_t$ in
$\mathcal{G}_c$, then there exists a $j$ so that $t_j \leq t \leq t_{j+1}$ and
\begin{align*}
\Vert g_{t_{j+1}} - g_t \Vert_{Q,2} &= \left \Vert \frac{\sigma(t_{j+1})
g_{t_{j+1}}}{\sigma(t_{j+1})} -\frac{\sigma(t) g_t}{\sigma(t)} \right
\Vert_{Q,2}\\
& = \left \Vert \frac{\sigma(t_{j+1})\sigma(t) g_{t_{j+1}} -
\sigma(t_{j+1})\sigma(t) g_t }{\sigma(t_{j+1}) \sigma(t)}  \right \Vert_{Q,2}\\
&= \left \Vert \frac{\sigma(t_{j+1})\sigma(t) g_{t_{j+1}} - \sigma^2(t_{j+1})
g_{t_{j+1}} + \sigma^2(t_{j+1}) g_{t_{j+1}} - \sigma(t_{j+1})\sigma(t) g_t
}{\sigma(t_{j+1}) \sigma(t)}  \right \Vert_{Q,2}\\
&= \left \Vert \frac{\sigma(t_{j+1})g_{t_{j+1}} [\sigma(t)  - \sigma(t_{j+1}) ]
+ \sigma(t_{j+1}) [\sigma(t_{j+1}) g_{t_{j+1}} - \sigma(t) g_t]
}{\sigma(t_{j+1}) \sigma(t)}  \right \Vert_{Q,2}\\
&\leq \left \Vert \frac{T [\sigma(t)  - \sigma(t_{j+1}) ]  }{2c^2}  \right
\Vert_{Q,2} +  \frac{ t_{j+1}-t}{c}   \\
&\leq \frac{(t_{j+1}-t) T^2}{2 c^3} + \frac{ t_{j+1}-t}{c}\\
&\leq (t_{j+1}-t_j)
\frac{T^2+2c^2}{2c^3}\\
&= \frac{\varepsilon T c^2}{T^2+2c^2} \, \frac{T^2+2c^2}{2c^3}\\
&= \frac{\varepsilon T}{2c}.
\end{align*}
Thus
$$
\sup_Q N(\mathcal{G}_c, L_2(Q), \varepsilon  T/(2c) ) \leq
\frac{(T^2+2c^2)(t^*-t_*)}{\varepsilon T c^2} \leq \frac{T^2+2c^2}{\varepsilon
c^2}.
$$
\end{proof}

Let $\mathbb{H}$ be a Brownian bridge with covariance function given in
\eqref{eq::standCovariance}.
\begin{lemma}
\label{lem::standardCLT}
One can construct a random variable $Y \stackrel{d}{=} \sup_{t \in [t_*,t^*]}
|\mathbb{H}|$ such that for large $n$,
$$
\mathbb{P}\left( \big | \sup_{t \in [t_*,t^*]} | \mathbb{H}_n (f_t)| - Y \big |
> C_7 \frac{(\log n)^{1/2}}{n^{1/8}}\right) \leq C_8 \frac{(\log
n)^{1/2}}{n^{1/8}}.
$$
for some absolute constants $C_7$ and $C_8$.
\end{lemma}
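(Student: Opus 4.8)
The plan is to view the standardized empirical process $\mathbb{H}_n$ as an ordinary empirical process indexed by the rescaled class $\mathcal{G}_c = \{ g_t := f_t/\sigma(t) : t_* \leq t \leq t^* \}$ from \lemref{lem::VCtype}, and then apply \thmref{theorem::CCK3.1} to it. Since $\int g_t \, dP = \mu(t)/\sigma(t)$, we have
$$
\mathbb{G}_n(g_t) = \frac{1}{\sqrt n}\sum_{i=1}^n \left( g_t(\lscape_i) - \int g_t \, dP \right) = \frac{1}{\sqrt n}\sum_{i=1}^n \frac{f_t(\lscape_i) - \mu(t)}{\sigma(t)} = \mathbb{H}_n(f_t),
$$
so $\sup_{t \in [t_*,t^*]} |\mathbb{H}_n(f_t)| = \| \mathbb{G}_n \|_{\mathcal{G}_c}$. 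Moreover the $P$-Brownian bridge indexed by $\mathcal{G}_c$ has covariance $\text{Cov}_P(g_t, g_u) = \kappa(t,u)$ with $\kappa$ as in \eqref{eq::standCovariance}, so it is a copy of the standardized bridge $\mathbb{H}$ and $\| \mathbb{G} \|_{\mathcal{G}_c} \stackrel{d}{=} \sup_{t \in [t_*,t^*]} |\mathbb{H}(f_t)|$.

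Next I verify the hypotheses of \thmref{theorem::CCK3.1} for $\mathcal{G}_c$. On $[t_*,t^*]$ we have $|f_t(\lscape)| \leq T/2$ and $\sigma(t) > c$, so $\mathcal{G}_c$ is uniformly bounded by $b = T/(2c)$; by \lemref{lem::VCtype} it is of VC type with $v = 1$ and $a = (T^2 + 2c^2)/c^2$; and $\sup_{g \in \mathcal{G}_c} \mathbb{E}[g(\lscape_1)^2] \leq (T/2)^2 / c^2 = b^2$, so we may take $\sigma^2 = b^2$ in the theorem. As $a, b, \sigma$ are constants depending only on $T$ and $c$, we have $K_n = C_1 v (\log n \vee \log(ab/\sigma)) = C_1 \log n$ for $n$ large. \thmref{theorem::CCK3.1} then yields, for every $\gamma \in (0,1)$, a random variable $Y \stackrel{d}{=} \| \mathbb{G} \|_{\mathcal{G}_c} \stackrel{d}{=} \sup_{t \in [t_*,t^*]} |\mathbb{H}(f_t)|$ with
$$
\mathbb{P}\left( \Big| \sup_{t \in [t_*,t^*]} |\mathbb{H}_n(f_t)| - Y \Big| > \frac{b K_n}{\gamma^{1/2} n^{1/2}} + \frac{\sigma^{1/2} K_n^{3/4}}{\gamma^{1/2} n^{1/4}} + \frac{b^{1/3} \sigma^{2/3} K_n^{2/3}}{\gamma^{1/3} n^{1/6}} \right) \leq C_2 \left( \gamma + \frac{\log n}{n} \right).
$$

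It remains to tune $\gamma$. Choosing $\gamma = (\log n)^{1/2} n^{-1/8}$, the three error terms on the left are of orders $(\log n)^{3/4} n^{-7/16}$, $(\log n)^{1/2} n^{-3/16}$, and $(\log n)^{1/2} n^{-1/8}$ respectively; the third dominates and equals $C_7 (\log n)^{1/2} n^{-1/8}$ after absorbing constants. On the right, $C_2(\gamma + \log n / n) = C_2\big( (\log n)^{1/2} n^{-1/8} + \log n / n \big)$, which is $C_8 (\log n)^{1/2} n^{-1/8}$ for $n$ large since $\log n / n = o\big( (\log n)^{1/2} n^{-1/8} \big)$. This is exactly the assertion of the lemma, with $C_7, C_8$ depending only on $T$ and $c$, valid once $n$ is large enough that $\gamma < 1$ and $K_n = C_1 \log n$.

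The argument is essentially bookkeeping on top of \lemref{lem::VCtype}, so there is no substantial obstacle. The two places that need a little care are: confirming that the coupling variable produced by \thmref{theorem::CCK3.1} is genuinely distributed as $\sup_{t} |\mathbb{H}(f_t)|$ (i.e.\ that the limiting Gaussian object over $\mathcal{G}_c$ has covariance \eqref{eq::standCovariance}, which uses that $\mathcal{G}_c$ is $P$-Donsker, guaranteed by its VC type and bounded envelope), and the elementary balancing of the three error rates against $\gamma$ so that the exponent of $n$ comes out to $1/8$ and the power of $\log n$ to $1/2$.
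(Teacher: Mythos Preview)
Your proposal is correct and follows exactly the paper's approach: the paper's proof is the one-line statement ``combine \lemref{lem::VCtype} and \thmref{theorem::CCK3.1} with $\gamma = (\log n)^{1/2}/n^{1/8}$,'' and you have simply written out the details of that combination, including the identification $\mathbb{H}_n = \mathbb{G}_n|_{\mathcal{G}_c}$, the choice $b=\sigma=T/(2c)$, and the balancing of the three error terms. There is nothing to add.
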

\begin{proof}
The result follows by combining Lemma \ref{lem::VCtype} and Theorem
\ref{theorem::CCK3.1}, with $\gamma=\frac{(\log n)^{1/2}}{n^{1/8}}$.
\end{proof}

Consider $\sigma(t)$ and $\hat \sigma(t)$, defined in \eqref{eq::sigma} and
\eqref{eq::sigmaHat}.
\begin{lemma} For large $n$ and some constant $C_9$,
\label{lem::sigmas}
  \begin{equation}\label{eq::varianceratiointerval}
    \mathbb{P} \left( \sup_{t \in [t_*, t^*]} \left|
    \frac{\hat\sigma_n(t)}{\sigma(t)} -1 \right| \geq C_9 \frac{(\log
    n)^{1/2}}{n^{1/2}} \right) \leq \frac{2}{n}.
  \end{equation}
\end{lemma}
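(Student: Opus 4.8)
The plan is to deduce the bound on the ratio $\hat\sigma_n(t)/\sigma(t)$ from a uniform bound on the difference of \emph{squares} $\hat\sigma_n^2(t)-\sigma^2(t)$, and to obtain the latter from Talagrand's inequality (\Cref{th::talagrand}) applied to two VC-type classes. First I would record the elementary reduction: on $[t_*,t^*]$ one has $\sigma(t)\ge c$, and since $\hat\sigma_n^2(t)=\frac1n\sum_{i=1}^n(f_t(\lscape_i)-\overline{\lscape}_n(t))^2\ge 0$ also $\hat\sigma_n(t)+\sigma(t)\ge\sigma(t)\ge c$, whence
\[
\left|\frac{\hat\sigma_n(t)}{\sigma(t)}-1\right|=\frac{|\hat\sigma_n^2(t)-\sigma^2(t)|}{\sigma(t)\,(\hat\sigma_n(t)+\sigma(t))}\le\frac{1}{c^2}\,|\hat\sigma_n^2(t)-\sigma^2(t)|.
\]
So it suffices to show that $\sup_{t\in[t_*,t^*]}|\hat\sigma_n^2(t)-\sigma^2(t)|=O\bigl((\log n)^{1/2}n^{-1/2}\bigr)$ outside an event of probability at most $2/n$.

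Next I would split, using \eqref{eq::sigma} and \eqref{eq::sigmaHat},
\[
\hat\sigma_n^2(t)-\sigma^2(t)=\Bigl(\frac1n\sum_{i=1}^n f_t^2(\lscape_i)-\mathbb{E}\,f_t^2(\lscape_1)\Bigr)-\bigl([\overline{\lscape}_n(t)]^2-[\mu(t)]^2\bigr),
\]
and bound each term over $[t_*,t^*]$. The second term factors as $(\overline{\lscape}_n(t)-\mu(t))(\overline{\lscape}_n(t)+\mu(t))$ and, since $|\overline{\lscape}_n(t)+\mu(t)|\le T$, is at most $T\,\sup_t|(P_n-P)f_t|$. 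To handle the two resulting suprema I need the relevant function classes to be of VC type: the class $\mathcal{F}^{*}=\{f_t:t_*\le t\le t^*\}$, with envelope $T/2$, was already shown in the proof of \Cref{th::CLT} to satisfy $\sup_Q N(\mathcal{F}^{*},L_2(Q),\varepsilon\|F\|_{Q,2})\le 2/\varepsilon$; for the squared class $\mathcal{G}_1=\{f_t^2:t_*\le t\le t^*\}$, with envelope $T^2/4$, I would use that $t\mapsto f_t^2(\lscape)$ is $T$-Lipschitz uniformly in $\lscape$, since $|f_t^2(\lscape)-f_u^2(\lscape)|\le|f_t(\lscape)-f_u(\lscape)|\,|f_t(\lscape)+f_u(\lscape)|\le T|t-u|$, and then run the same grid argument as in \Cref{lem::VCtype} to get a covering bound of the form $(a_1/\varepsilon)^{1}$. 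Both classes are thus of VC type with $v=1$.

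Finally I would apply \Cref{th::talagrand} to $\mathcal{F}^{*}$ (with $b=\sigma=T/2$) and to $\mathcal{G}_1$ (with $b=\sigma=T^2/4$), in both cases $v=1$, with the tail parameter set to $\log n$; for $n$ large the side conditions $b^2v\log(ab/\sigma)\le n\sigma^2$ and $\log n\le n\sigma^2/b^2=n$ hold, and dividing the conclusions by $n$ gives, each with probability at least $1-1/n$, $\sup_t|(P_n-P)f_t|\le\frac{AT}{2}\sqrt{\log n/n}$ and $\sup_t\bigl|\frac1n\sum_{i=1}^n f_t^2(\lscape_i)-\mathbb{E}\,f_t^2(\lscape_1)\bigr|\le\frac{AT^2}{4}\sqrt{\log n/n}$ for the Talagrand constant $A$. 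On the intersection of these events (probability $\ge 1-2/n$) the decomposition gives $\sup_t|\hat\sigma_n^2(t)-\sigma^2(t)|\le\frac{3AT^2}{4}\sqrt{\log n/n}$, and the first-paragraph reduction yields \eqref{eq::varianceratiointerval} with, say, $C_9=AT^2/c^2$. The only step involving real work is showing that the squared class $\mathcal{G}_1$ is still VC type; this is a routine repetition of the Lipschitz-in-$t$/grid argument of \Cref{lem::VCtype}, the sole new ingredient being the trivial estimate $|f_t^2-f_u^2|\le T|t-u|$, so I expect no genuine obstacle beyond bookkeeping of constants.
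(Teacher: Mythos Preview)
Your proposal is correct and follows essentially the same route as the paper: reduce the ratio to the difference $\hat\sigma_n^2-\sigma^2$, split into a second-moment and a squared-first-moment term, verify that the relevant classes are VC type via the Lipschitz-in-$t$ grid argument, and apply Talagrand's inequality (\Cref{th::talagrand}) twice with tail parameter $\log n$ to get the $2/n$ bound. The only cosmetic difference is that the paper absorbs the normalization by $\sigma(t)$ into the function classes (working with $\mathcal{G}_c=\{f_t/\sigma(t)\}$ and $\mathcal{G}_c^2$ from \Cref{lem::VCtype}), whereas you pull the factor $1/c^2$ out at the start and work with the simpler classes $\mathcal{F}^*$ and $\{f_t^2\}$; both choices lead to the same conclusion.
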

\begin{proof}
Let $\mathcal{G}_c=\{g_t: g_t= f_t/\sigma(t), \;  t_*\leq t \leq t^*  \}$ and
$\mathcal{G}_c^2:=\{g^2: g \in \mathcal{G}_c  \}$.\\
By definition
$\displaystyle \hat \sigma_n^2(t)= \frac{1}{n} \sum_{i=1}^n f^2_t(\lambda_i) -
[\overline{\lscape}_n(t)]^2$ and
$\displaystyle
\sigma^2(t)=\mathbb{E}[f_t^2(\lambda_1)]-(\mathbb{E}[f_t(\lambda_1)])^2 $.
Thus
\begin{align}
\left| \frac{\hat\sigma_n(t)}{\sigma(t)} -1 \right| &\leq \left|
\frac{\hat\sigma^2_n(t)}{\sigma^2(t)} -1 \right| = \left|
\frac{\hat\sigma^2_n(t)- \sigma^2(t)}{\sigma^2(t)} \right| \nonumber \\
& \leq \sup_{t \in [t_*, t^*]} \left | \frac{1}{n} \frac{\sum_{i=1}^n
f_t^2(\lambda_i)}{\sigma^2(t)} -
\frac{\mathbb{E}[f^2_t(\lambda_1)]}{\sigma^2(t)} \right | +  \sup_{t \in [t_*,
t^*]} \left | \left[ \frac{1}{n} \frac{\sum_{i=1}^n f_t(\lambda_i)}{\sigma(t)}
\right]^2 - \left[ \frac{\mathbb{E}[f_t(\lambda_1)]}{\sigma(t)} \right]^2 \right
| \nonumber \\
&= \sup_{g \in \mathcal{G}^2_c} \left | \frac{1}{n} \sum_{i=1}^n g(\lambda) -
\mathbb{E}[g(\lambda)] \right | +
\sup_{g \in \mathcal{G}_c} \left | \left[\frac{1}{n} \sum_{i=1}^n g(\lambda)
\right]^2 - \left(\mathbb{E}[g(\lambda)] \right)^2 \right |
\label{eq::sigmas}
\end{align}

Using the same strategy of Lemma \ref{lem::VCtype}, it can be shown that
$\mathcal{G}_c^2$ is VC type with some constants $A$ and $V \geq 1$ and envelope
$T^2 / (4c^2)$.
Therefore, by Theorem \ref{th::talagrand}, with $t=\log n$ and for large $n$,
\begin{equation}
\label{eq::talagrand1}
\mathbb{P} \left( \sup_{g \in \mathcal{G}^2_c} \left | \frac{1}{n} \sum_{i=1}^n
g(\lambda) - \mathbb{E}[g(\lambda)] \right | > C_{10} \frac{(\log
n)^{1/2}}{n^{1/2}}   \right) \leq \frac{1}{n}.
\end{equation}
Note that
$$
\sup_{g \in \mathcal{G}_c} \left | \left[\frac{1}{n} \sum_{i=1}^n g(\lambda)
\right]^2 - \left(\mathbb{E}[g(\lambda)] \right)^2 \right |
\leq \frac{T}{c}  \sup_{g \in \mathcal{G}_c} \left | \frac{1}{n} \sum_{i=1}^n
g(\lambda)  - \mathbb{E}[g(\lambda)]  \right |
$$
and applying again Theorem \ref{th::talagrand} to the right hand side we obtain
\begin{equation}
\label{eq::talagrand2}
\mathbb{P} \left( \sup_{g \in \mathcal{G}_c} \left | \left[\frac{1}{n}
\sum_{i=1}^n g(\lambda) \right]^2 - \left(\mathbb{E}[g(\lambda)] \right)^2
\right | > C_{11} \frac{(\log n)^{1/2}}{n^{1/2}}   \right) \leq \frac{1}{n}.
\end{equation}

The inequality of \eqref{eq::varianceratiointerval} follows by combining
\eqref{eq::sigmas}, \eqref{eq::talagrand1} and \eqref{eq::talagrand2}.
\end{proof}

\begin{lemma}[Estimation error of $ \hat Q(\alpha)$]
\label{lem::quantile}
Let $Q(\alpha)$ be the $(1-\alpha)$-quantile of the random variable $Y
\stackrel{d}{=}\sup_{t \in [t_*,t^*]} | \mathbb{H}|$ and $\hat Q(\alpha)$ be the
$(1-\alpha)$-quantile of the random variable $\sup_{t \in [t_*,t^*]} |
\hat{\mathbb{H}}_n|$. There exist positive constants $C_{12}$ and $C_{13}$ such
that for large $n$:
\begin{itemize}
\item[(i)] $\displaystyle \mathbb{P}\left[ \hat Q(\alpha) < Q\left(\alpha +
C_{12}\frac{(\log n)^{3/8}}{n^{1/8}}\right) - C_{13}\frac{(\log
n)^{3/8}}{n^{1/8}} \right] \leq \frac{5}{n} $, \nopagebreak
\item[(ii)] $\displaystyle \mathbb{P}\left[ \hat Q(\alpha) > Q\left(\alpha -
C_{12}\frac{(\log n)^{3/8}}{n^{1/8}}\right) + C_{13}\frac{(\log
n)^{3/8}}{n^{1/8}} \right] \leq \frac{5}{n} $.
\end{itemize}
\end{lemma}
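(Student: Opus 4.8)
The plan is to prove, for the feasible bootstrap process $\hat{\mathbb{H}}_n$, an analogue of \lemref{lem::standardCLT}: a coupling of $\sup_{t\in[t_*,t^*]}|\hat{\mathbb{H}}_n(f_t)|$ with $Y\stackrel{d}{=}\sup_{t\in[t_*,t^*]}|\mathbb{H}(f_t)|$, valid up to a $(\log n)^{3/8}/n^{1/8}$ error on a set of data of probability at least $1-5/n$, and then to translate that coupling into the two quantile inequalities by a routine argument. All the ingredients are already in place: \lemref{lem::sigmas} to replace $\hat\sigma_n$ by $\sigma$, \lemref{lem::VCtype} to see that the standardised class $\mathcal{G}_c=\{f_t/\sigma(t):t_*\le t\le t^*\}$ is of VC type with envelope $T/(2c)$, \thmref{theorem::CCK3.2} for the Gaussian approximation of the multiplier bootstrap, and \thmref{theorem::anti-concentration} for the (Lipschitz) continuity of the distribution function of $Y$.

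First I would pass from $\hat{\mathbb{H}}_n$ to the version $\tilde{\mathbb{H}}_n$ of \eqref{eq::standBoot} that uses the true $\sigma(t)$. Since $\hat{\mathbb{H}}_n(f_t)=(\sigma(t)/\hat\sigma_n(t))\,\tilde{\mathbb{H}}_n(f_t)$, \lemref{lem::sigmas} gives a data event of probability at least $1-2/n$ on which $\sup_t|\hat\sigma_n(t)/\sigma(t)-1|\le C_9(\log n)^{1/2}n^{-1/2}$, whence $|\sup_t|\hat{\mathbb{H}}_n(f_t)|-\sup_t|\tilde{\mathbb{H}}_n(f_t)||\le 2C_9(\log n)^{1/2}n^{-1/2}\sup_t|\tilde{\mathbb{H}}_n(f_t)|$; since $\sup_t|\tilde{\mathbb{H}}_n(f_t)|=O(\sqrt{\log n})$ off a set of conditional probability $O((\log n)^{3/8}n^{-1/8})$ (from the coupling of the next step and the sub-Gaussian tail of $Y$), the replacement contributes only an extra $O((\log n)n^{-1/2})$, negligible compared with $(\log n)^{3/8}n^{-1/8}$. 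Next, I would apply \thmref{theorem::CCK3.2} to $\mathcal{G}_c$, which is of VC type by \lemref{lem::VCtype} and uniformly bounded by $T/(2c)$ (so $b^2K_n\le n\sigma^2$ for large $n$); observing that $\tilde{\mathbb{G}}_n(\lscape_1^n,\xi_1^n)(f_t/\sigma(t))=\tilde{\mathbb{H}}_n(f_t)$ and that the Brownian bridge attached to $\mathcal{G}_c$ has covariance \eqref{eq::standCovariance}, i.e.\ equals $\mathbb{H}$, one obtains a set $S_n$ with $\mathbb{P}(\lscape_1^n\in S_n)\ge 1-3/n$ such that for each fixed $\breve\lambda_1^n\in S_n$ there is $W\stackrel{d}{=}Y$ with $\mathbb{P}_{\xi}\bigl(|\sup_t|\tilde{\mathbb{H}}_n(\breve\lambda_1^n,\xi_1^n)(f_t)|-W|>\rho_n+\delta\bigr)\le C_3\bigl((\log n)^{3/4}/(\delta n^{1/4})+1/n\bigr)$, with $\rho_n=O((\log n)^{3/4}n^{-1/4})$; choosing $\delta\asymp(\log n)^{3/8}n^{-1/8}$ balances the two sides, making the coupling error and the failure probability both $O((\log n)^{3/8}n^{-1/8})$. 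Combining the two steps (and absorbing $\mathbb{P}(W>C\sqrt{\log n})\le 1/n$), there is $\bar\rho_n=O((\log n)^{3/8}n^{-1/8})$ so that, on a data event of probability at least $1-5/n$,
$$
\mathbb{P}\bigl(Y\le z-\bar\rho_n\bigr)-\bar\rho_n\ \le\ \mathbb{P}\Bigl(\sup_{t\in[t_*,t^*]}|\hat{\mathbb{H}}_n(f_t)|\le z\ \Bigm|\ \lscape_1^n\Bigr)\ \le\ \mathbb{P}\bigl(Y\le z+\bar\rho_n\bigr)+\bar\rho_n, \qquad z\in\R .
$$

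It then remains to convert this sandwich into the quantile bounds. Writing $G$ for the distribution function of $Y$ --- continuous, since \thmref{theorem::anti-concentration} applied to the unit-variance process $\mathbb{H}$ bounds $\sup_x\mathbb{P}(|Y-x|\le\varepsilon)$ by a constant multiple of $\varepsilon$ --- and using $\mathbb{P}(\sup_t|\hat{\mathbb{H}}_n(f_t)|\le\hat Q(\alpha)\mid\lscape_1^n)=1-\alpha$, on the good data event the right inequality (at $z=\hat Q(\alpha)$) gives $G(\hat Q(\alpha)+\bar\rho_n)\ge 1-\alpha-\bar\rho_n$, hence $\hat Q(\alpha)\ge Q(\alpha+\bar\rho_n)-\bar\rho_n$; the left inequality gives $G(\hat Q(\alpha)-\bar\rho_n)\le 1-\alpha+\bar\rho_n$, hence (using continuity of $G$, and enlarging the constant by a fixed factor to create the needed strict inequality) $\hat Q(\alpha)\le Q(\alpha-2\bar\rho_n)+\bar\rho_n$. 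Choosing $C_{12}$ and $C_{13}$ to dominate the constants hidden in $\bar\rho_n$ and $2\bar\rho_n$ yields (i) and (ii), because the complement of the good data event has probability at most $5/n$.

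I expect the only real difficulty to be organisational rather than conceptual: keeping careful track of which probabilities are taken over the data $\lscape_1^n$ and which over the multipliers $\xi_1^n$, so that the conditional coupling of \thmref{theorem::CCK3.2} and the conditional definition of $\hat Q(\alpha)$ are combined correctly; cleanly absorbing the data-dependent $\hat\sigma_n$-replacement error; and tuning $\delta$ so as to land on the rate $(\log n)^{3/8}n^{-1/8}$. This last point is exactly where the lemma improves on \propref{prop::bootstrap}: comparing the shifted quantile levels $Q(\alpha\pm\bar\rho_n)$ sidesteps the conversion of the coupling into a Kolmogorov distance and hence avoids the extra $\sqrt{\log n}$ factor present there.
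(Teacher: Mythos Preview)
Your proposal is correct and follows essentially the same architecture as the paper's proof: split $\sup_t|\hat{\mathbb H}_n|-Y$ into the variance-replacement piece $\hat{\mathbb H}_n-\tilde{\mathbb H}_n$ (controlled on a data set of probability $\ge 1-2/n$ via \lemref{lem::sigmas}) and the multiplier-bootstrap coupling piece $\tilde{\mathbb H}_n-Y$ (controlled on a data set of probability $\ge 1-3/n$ via \thmref{theorem::CCK3.2} applied to $\mathcal G_c$, with $\delta\asymp(\log n)^{3/8}n^{-1/8}$), then intersect the two events and convert the resulting conditional coupling into the quantile inequalities. The one tactical difference is in the first piece: the paper views $\Delta\mathbb H_n=\hat{\mathbb H}_n-\tilde{\mathbb H}_n$ as a conditional Gaussian process with variance $\le C_9^2(\log n)/n$ and bounds its supremum directly via Talagrand's Gaussian inequality, whereas you use the multiplicative bound $|\sup_t|\hat{\mathbb H}_n|-\sup_t|\tilde{\mathbb H}_n||\le 2C_9(\log n)^{1/2}n^{-1/2}\sup_t|\tilde{\mathbb H}_n|$ together with $\sup_t|\tilde{\mathbb H}_n|=O(\sqrt{\log n})$ from the coupling itself. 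Both routes yield the same $O((\log n)n^{-1/2})$ contribution, well inside the target rate; your version is slightly more self-contained in that it avoids the extra reference.
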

\begin{proof}
Define $\Delta \mathbb{H}_n(f_t):= \hat{\mathbb{ H}}_n(f_t) - \mathbb{\tilde
H}_n(f_t)$. Consider the set $S_{n,1} \in \mathcal{S}^n$ of values $\breve \lambda_1^n$ such that,
whenever $\lambda_1^n \in S_{n,1}$,
$$
\left| \frac{\hat \sigma(t)}{\sigma(t)}-1\right| \leq C_9 \frac{(\log
n)^{1/2}}{n^{1/2}} \quad \text{for all } t \in [t_*, t^*].
$$
By Lemma \ref{lem::sigmas}, $\mathbb{P}(\lambda_1^n \in S_{n,1})\geq 1- 2/n$. Fix $\breve
\lambda_1^n \in S_{n,1}$. Then
$$
\Delta \mathbb{H}_n(\breve \lambda_1^n, \xi_1^n)(f_t):=
   \frac{1}{\sqrt{n}} \sum_{i=1}^n \xi_i \frac{f_t(\breve \lambda_i)-
\overline{\lscape}_n(t)}{\sigma(t)} \left(\frac{\sigma(t)}{\hat \sigma_n(t)} -1
\right)
$$
is a zero-mean Gaussian process with variance
$$
\frac{\hat \sigma_n^2(t)}{\sigma^2(t)} \left(\frac{\sigma(t)}{\hat \sigma_n(t)}
-1 \right)^2 \leq C_9^2 \frac{\log n}{n}.
$$
Let $\mathcal{\tilde G}_c=\{a g: a \in (0,1], g \in \mathcal{G}_c \}$.
$\mathcal{\tilde G}_c$ is VC type with some constants $A$ and $V\geq1$ and
envelope $T^2 /(4c^2)$. Moreover, the uniform covering number of the process
$\Delta \mathbb{H}_n(\breve \lambda_1^n, \xi_1^n)(f_t)$ with respect to the
natural semimetric (standard deviation) is bounded by the uniform covering
number of $\mathcal{\tilde G}_c$. Therefore we can apply Theorem 2.4 in
\cite{talagrand1994sharper} (see also Section A.2.2 in \cite{van1996weak})
and obtain
\begin{align}
\mathbb{P}\left( \left| \sup_{t \in [t_*, t^*]} |\hat{\mathbb{H}}(\breve
\lambda_1^n)(f_t)| - \sup_{t \in [t_*, t^*]} |\mathbb{\tilde H}(\breve
\lambda_1^n)(f_t)|  \right| \geq \beta_n \right) &\leq
\mathbb{P} \left( \sup_{t \in [t_*, t^*]} | \Delta \mathbb{H}_n(\breve
\lambda_1^n, \xi_1^n)(f_t) | \geq \beta_n \right) \nonumber \\
& \leq D \left( \frac{\beta_n n}{C_9^2 \log n} \right)^V \frac{C_9\sqrt{\log
n}}{ \beta_n \sqrt{n}} \exp\left(- \frac{\beta_n^2 n}{2C_9^2 \log n} \right),
\end{align}
for some constant $D$. For $C_{14}=\sqrt{2} C_9 (1+V/2)^{1/2}$ and $\beta_n=
C_{14} (\log n)/n^{1/2}$, the last quantity is bounded~by
$$
C_{15} \frac{1}{n (\log n)^{1/2}},
$$
for some constant $C_{15}$.
Therefore, for large $n$,
\begin{align}
&\mathbb{P}\left( \left| \sup_{t \in [t_*, t^*]} |\hat{\mathbb{H}}(\breve
\lambda_1^n)(f_t)| - \sup_{t \in [t_*, t^*]} |\mathbb{\tilde H}(\breve
\lambda_1^n)(f_t)|  \right| \geq C_{14} \frac{(\log n)^{3/8}}{n^{1/8}} \right)
\nonumber \\
&\leq \mathbb{P}\left( \left| \sup_{t \in [t_*, t^*]} |\hat{\mathbb{ H}}(\breve
\lambda_1^n)(f_t)| - \sup_{t \in [t_*, t^*]} |\mathbb{\tilde H}(\breve
\lambda_1^n)(f_t)|  \right| \geq C_{14} \frac{(\log n)}{n^{1/2}} \right)
\nonumber \\
& \leq C_{15} \frac{1}{n (\log n)^{1/2}}  \leq C_{15} \frac{(\log
n)^{3/8}}{n^{1/8}}. \label{eq::sn1}
\end{align}

By Theorem \ref{theorem::CCK3.2} with $\delta= \frac{(\log n)^{3/8}}{n^{1/8}}$,
for large $n$, there exists a set $S_{n,2} \in \mathcal{S}^n$ such that
$\mathbb{P}(\lambda_1^n \in S_{n,2})\geq
1-3/n$, and for any $\breve \lambda_1^n \in S_{n,2}$, one can construct a random
variable $Y \stackrel{d}{=} \sup_{t \in [t_*,t^*]} |\mathbb{H}|$ such that
\begin{equation}
\label{eq::sn2}
\mathbb{P}\left( \left| \sup_{t \in [t_*, t^*]} |\mathbb{\tilde H}(\breve
\lambda_1^n)(f_t)| - Y \right| \geq C_{16} \frac{(\log n)^{3/8}}{n^{1/8}}
\right) \leq C_{17}  \frac{(\log n)^{3/8}}{n^{1/8}}.
\end{equation}
Combining \eqref{eq::sn1} and \eqref{eq::sn2}, we have that, for large $n$ and
$\breve \lambda_1^n \in S_{n,0}:= S_{n,1} \cap S_{n,2}$,
\begin{equation}
\mathbb{P}\left( \left| \sup_{t \in [t_*, t^*]} |\hat{\mathbb{H}}(\breve
\lambda_1^n)(f_t)| - Y \right| \geq C_{13} \frac{(\log n)^{3/8}}{n^{1/8}}\right)
\leq C_{12}  \frac{(\log n)^{3/8}}{n^{1/8}},
\end{equation}
for some constants $C_{12}, C_{13}$.\\
Let $\hat Q(\alpha, \breve \lambda_1^n)$ be the conditional
$(1-\alpha)$-quantile of $\sup_{t \in [t_*, t^*]} | \hat{\mathbb{H}}(\breve
\lambda_1^n)(f_t)|$.
Then $\hat Q(\alpha)= \hat Q(\alpha,\breve  \lambda_1^n)$ is a random quantity and for
$\breve \lambda_1^n \in S_{n,0}$, we have that
\begin{align*}
& \mathbb{P}\left( Y \leq \hat Q(\alpha, \breve \lambda_1^n) +  C_{13}
\frac{(\log n)^{3/8}}{n^{1/8}} \right) \\
&\geq \mathbb{P}\left( \left\{Y \leq \hat Q(\alpha, \breve \lambda_1^n) +
C_{13} \frac{(\log n)^{3/8}}{n^{1/8}} \right\} \bigcap
    \left\{   \left| \sup_{t \in [t_*, t^*]} |\hat{\mathbb{ H}}(\breve
\lambda_1^n)(f_t)| - Y \right| \leq C_{13} \frac{(\log n)^{3/8}}{n^{1/8}}
\right\} \right)\\
& \geq \mathbb{P}\left( \sup_{t \in [t_*, t^*]} | \hat{\mathbb{H}}(\breve
\lambda_1^n)(f_t)| \leq \hat Q(\alpha, \breve \lambda_1^n)  \right) -  C_{12}
\frac{(\log n)^{3/8}}{n^{1/8}}\\
&\geq 1-\alpha -  C_{12} \frac{(\log n)^{3/8}}{n^{1/8}}.
\end{align*}
Therefore $Q\left(\alpha+C_{12} \frac{(\log n)^{3/8}}{n^{1/8}}\right) \leq \hat
Q(\alpha)+ C_{13} \frac{(\log n)^{3/8}}{n^{1/8}}$ whenever $\lambda_1^n \in
S_{n,0}$, which happens with probability at least $1-5/n$. This proves part (i)
of the theorem. The proof of part (ii) is similar and therefore is~omitted.
\end{proof}

\section{Main Proofs}\label{append:proofs}


\begin{proof}[Proof of Theorem \ref{th::CLT}]
Let $\mathcal{F}^*=\{f_t \in \mathcal{F}: t \in [t_* \, , t^*] \}$.
The Lipschitz property implies that for every $\lscape \in \lscapespace$,
$|f_t(\lscape) - f_u(\lscape)| = |\lscape(t)- \lscape(u)| \leq |t-u|$  and hence
$$
\Vert f_t -f_u \Vert_{Q,2} \leq |t-u|.
$$
Construct a grid,
$0\equiv t_0 < t_1 < \cdots < t_N \equiv T$
where $t_{j+1} - t_{j} := \varepsilon \Vert F \Vert_{Q,2} = \varepsilon \, T/2$.
In the last equality, we used the constant envelope $F(\lscape)=T/2$.
We claim that
$\{f_{t_j}: 1\leq j \leq N \}$ is an $(\varepsilon \, T/2)-$net of ${\cal F^*}$:
choosing $f_t \in \cal F^*$, then there exists a $j$ so that $t_j \leq t \leq
t_{j+1}$ and
$$
\Vert f_{t_{j+1}} -  f_t \Vert_{Q,2} \leq |t_{j+1}-t| \leq
|t_{j+1}-t_j|=\varepsilon \, T/2.
$$
Thus, we have a bound for the covering number of $\mathcal{F^*}$, as in
\eqref{eq::covering}:
$$
\sup_Q N({\cal F^*},L_2(Q),||F||_2 \varepsilon) \leq \frac{T}{\varepsilon \Vert
F \Vert_{Q,2}}=  2/\varepsilon,
$$
where the supremum is taken over all measures $Q$ on $\lscapespace$.\\
By Theorem \ref{theorem::CCK3.1}, with $b=\sigma=T/2$, $v=1$, $K_n= A
\, (\log n \vee 1)$, there exists $W \stackrel{d}{=}
\sup_{f \in \mathcal{F}^*}\mathbb{G}$ such that, for $n>2$,
$$
\mathbb{P}\left( \big| \sup_{t \in [t_* \, , t^*]} |\mathbb{G}_n|  - W \big| >
\frac{T A\log n}{2 \gamma^{1/2}n^{1/2}} +  \frac{T^{1/2} (A \log
n)^{3/4}}{\gamma^{1/2} n^{1/4}} + \frac{T (A \log n)^{2/3}}{2
\gamma^{1/3}n^{1/6}} \right) \leq C_2 \left(\gamma + \frac{\log n}{n} \right)
$$
for some constants $C_2$.\\
Let
$$g(n,\gamma,T)=\frac{T A \log n}{2 \gamma^{1/2}n^{1/2}} +  \frac{T^{1/2} (A
\log n)^{3/4}}{\gamma^{1/2} n^{1/4}} + \frac{T (A \log n)^{2/3}}{2
\gamma^{1/3}n^{1/6}} $$
and define the event
$ E:=\left\{ \big| \sup_{t \in [t_* \, , t^*]} |\mathbb{G}_n|  - W \big| > g(n,
\gamma, T) \right\}.$
Then for any $z$ and large $n$,
\begin{align*}
\mathbb{P}\left(\sup_{t \in [t_* \, , t^*]} |\mathbb{G}_n| \leq z \right) -
\mathbb{P}(W \leq z) &= \mathbb{P}\left(\sup_{t \in [t_* \, , t^*]}
|\mathbb{G}_n| \leq z \; , \; E \right) - \mathbb{P}(W \leq z) +
\mathbb{P}\left(\sup_{t \in [t_* \, , t^*]} |\mathbb{G}_n| \leq z \; , \; E^c
\right)\\
& \leq \mathbb{P}\left( W \leq z+ g(n, \gamma, T)  \right) - \mathbb{P}(W \leq
z)+ \mathbb{P}(E^c)\\
& \leq C_4 \, g(n, \gamma, T) \sqrt{\log \frac{c}{g(n, \gamma, T)}} + C_2
\left(\gamma + \frac{\log n}{n} \right),
\end{align*}
where in the last step we used the anti-concentration inequality of Theorem
\ref{theorem::anti-concentrationSigma} .\\
Similarly,
\begin{align*}
\mathbb{P}(W \leq z) - \mathbb{P}\left(\sup_{t \in [t_* \, , t^*]}
|\mathbb{G}_n| \leq z \right) & \leq
\mathbb{P}(W \leq z, E) - \mathbb{P}\left(\sup_{t \in [t_* \, , t^*]}
|\mathbb{G}_n| \leq z, E \right) + P(E^c)\\
& \leq  \mathbb{P}(W \leq z, E) - \mathbb{P}\left(W \leq z - g(n, \gamma, T) , E
\right) + P(E^c)\\
& \leq  \mathbb{P}\left(z- g(n, \gamma, T) \leq W \leq z, E \right)  + P(E^c)\\
& \leq C_4 \, g(n, \gamma, T) \sqrt{\log \frac{c}{g(n, \gamma, T)}}+ C_2
\left(\gamma + \frac{\log n}{n} \right).
\end{align*}
It follows that
\begin{equation}
\label{eq::CLT2}
\sup_z \left | \mathbb{P}\left(\sup_{t \in [t_* \, , t^*]} |\mathbb{G}_n| \leq z
\right) -  \mathbb{P}( W \leq z) \right | \leq C_4 \, g(n, \gamma, T) \sqrt{\log
\frac{c}{g(n, \gamma, T)}}+ C_2 \left(\gamma + \frac{\log n}{n} \right).
\end{equation}
Choosing $\gamma= \frac{(A \log n)^{7/8}}{n^{1/8}}$, we have
$$
g(n,\gamma,T)=\frac{T (A \log n)^{9/16}}{2 n^{7/16}} +  \frac{T^{1/2} (A \log
n)^{5/16}}{n^{3/16}} + \frac{T (A \log n)^{3/8}}{2 n^{1/8}} .
$$
The result follows by noticing that,
$$
g(n,\gamma,T)=O\left(\frac{(\log n)^{3/8}}{n^{1/8}} \right)
$$
and
$$
\sqrt{\log \frac{c}{g(n, \gamma, T)}}=O\left( (\log n)^{1/2} \right).
$$
\end{proof}

\begin{proof}[Proof of Theorem \ref{th::band} (Uniform Band)] \text{} \\
Follows from Theorem \ref{th::CLT} and Proposition \ref{prop::bootstrap}.\\
The second statement follows from the fact that $\tilde{Z}(\alpha)=O_P(1)$,
where $\tilde{Z}(\alpha)$ is defined in \eqref{eq::sim}.
\end{proof}

\begin{proof}[Proof of Theorem \ref{th::adaptiveBand} (Adaptive Band)] \text{}
\\
Let $\mathbb{H}(f_t)$ be the Brownian bridge with covariance function given in
\eqref{eq::standCovariance}.
Consider $Y \stackrel{d}{=} \sup_{t \in [t_*,t^*]} |\mathbb{H}|$.
Let $Q(\alpha)$ be the $(1-\alpha)$-quantile of $Y$ and $\hat Q(\alpha)$ be the
$(1-\alpha)$-quantile of the random variable
$\sup_{t \in [t_*,t^*]} |  \hat{\mathbb{H}}_n|$.
Let $\varepsilon_1(n)=C_7 (\log n)^{1/2}/n^{1/8}$, $\varepsilon_2(n)=C_{13}
(\log n)^{3/8}/n^{1/8}$, $\varepsilon_3(n)=C_9 (\log n)^{1/2}/n^{1/2}$,6 and
define $\varepsilon(n)=\varepsilon_1(n)+\varepsilon_2(n)+\varepsilon_3(n)
Q(\alpha)$.
Similarly let $\delta_1(n)=C_8 (\log n)^{1/2}/n^{1/8}$, $\delta_2(n)=5/n$,
$\delta_3(n)=2/n$, and define $\delta(n)=\delta_1(n)+\delta_2(n)+\delta_3(n)$ .
Define $\tau(n)=C_{12}(\log n)^{3/8}/n^{1/8}$.\\
Then for large $n$,
\begin{align*}
&\mathbb{P}\Bigl( \ell_\sigma(t) \leq \mu(t) \leq u_\sigma(t)\ {\rm for\ all\ }t
\in [t_*\, , t^*] \Bigr)\\
&= \mathbb{P}\left( \sup_{t \in [t_*,t^*]} \left| \mathbb{H}_n(f_t)
\frac{\sigma(t)}{\hat\sigma_n(t)} \right|  \leq \hat Q(\alpha) \right)\\
&\geq \mathbb{P}\left[ \sup_{t \in [t_*,t^*]} \left| \mathbb{H}_n(f_t) \right|
\leq \left(1- \varepsilon_3(n) \right) Q\left(\alpha + \tau(n) \right) -
\varepsilon_2(n) \right]- \delta_2(n)-\delta_3(n),
\end{align*}
where we applied Lemmas \ref{lem::sigmas} and \ref{lem::quantile}.
Using Lemma \ref{lem::standardCLT} the last quantity is no smaller than
\begin{align*}
&\mathbb{P}\big[ Y  \leq \left(1- \varepsilon_3(n) \right) Q\left(\alpha +
\tau(n) \right) - \varepsilon_2(n) - \varepsilon_1(n) \big]- \delta_1(n) -
\delta_2(n)-\delta_3(n)\\
&\geq\mathbb{P}\big[ Y  \leq Q\left(\alpha + \tau(n) \right) - \varepsilon(n)
\big]- \delta(n)\\
&\geq \mathbb{P}\big[ Y  \leq  Q\left(\alpha + \tau(n) \right)  \big] - \sup_{x
\in \mathbb{R}} \mathbb{P}\left(\Big| Y - x  \Big| \leq \varepsilon(n) \right) -
\delta(n)\\
&\geq 1-\alpha- \tau(n) - \delta(n) - \sup_{x \in \mathbb{R}} \mathbb{P}\left(\Big| Y - x  \Big| \leq \varepsilon(n) \right)\\
&\geq 1-\alpha- \tau(n) - \delta(n) - A \varepsilon(n),
\end{align*}
where in the last step we applied the anti-concentration inequality of Theorem \ref{theorem::anti-concentration}.
\end{proof}

\end{document}